%

\input ./style/arxiv-general.cfg
\documentclass[aap,MSNbibl,secthm,dvips]{arximspdf}
\makeatletter
   \@ifpackageloaded{graphicx}{}{\usepackage{graphicx}}
\makeatother
\usepackage{mathbh}


\doi{10.1214/14-AAP1037} 
\volume{25}
\issue{4}
\pubyear{2015}
\firstpage{1827}
\lastpage{1867}
\docsubty{FLA}

\makeatletter
\newcommand{\ds}{\displaystyle}
\newproclaim{nota}{Notation}
\newtheorem{cor}[thm]{Corollary}
\newtheorem{prop}[thm]{Proposition}
\newtheorem{lem}[thm]{Lemma}
\newproclaim{df}[thm]{Definition}
\newproclaim{rem}[thm]{Remark}
\newproclaim{ex}[thm]{Example}
\makeatother

\begin{document}
\begin{frontmatter}

\title{Strict local martingales and bubbles}
\runtitle{Strict local martingales and bubbles}

\begin{aug}
\author[A]{\fnms{Constantinos}~\snm{Kardaras}\ead[label=e1]{k.kardaras@lse.ac.uk}},
\author[B]{\fnms{D\"orte}~\snm{Kreher}\thanksref{T1}\ead[label=e2]{kreher@math.hu-berlin.de}}\break
\and
\author[C]{\fnms{Ashkan}~\snm{Nikeghbali}\corref{}\ead[label=e3]{ashkan.nikeghbali@math.uzh.ch}}
\runauthor{C. Kardaras, D. Kreher and A. Nikeghbali}
\affiliation{London School of Economics  and Political Science,\break Humboldt-Universit\"at zu Berlin and Universit\"at Z\"urich}
\address[A]{C. Kardaras\\
Department of Statistics\\
London School of Economics\\
\quad and Political Science\\
London WC2A 2AE\\
United Kingdom\\
\printead{e1}} 
\address[B]{D. Kreher\\
Department of Mathematics\\
Humboldt-Universit\"at zu Berlin\\ Berlin\\
Germany\\
\printead{e2}}
\address[C]{A. Nikeghbali\\ Institut f\"ur Mathematik \& Institut\\
\quad f\"ur Banking and
Finance\\ Universit\"at Z\"urich\\ Z\"urich\\
Switzerland\\
\printead{e3}}
\end{aug}
\thankstext{T1}{Supported by SNF Grant 137652 during the preparation
of the paper.}

\received{\smonth{12} \syear{2013}}

%
\begin{abstract}
This paper deals with asset price bubbles modeled by strict local
martingales. With any strict local martingale, one can associate a new
measure, which is studied in detail in the first part of the paper. In
the second part, we determine the ``default term'' apparent in
risk-neutral option prices if the underlying stock exhibits a bubble
modeled by a strict local martingale. Results for certain path
dependent options and last passage time formulas are given.
\end{abstract}

\begin{keyword}[class=AMS]
\kwd{91G99}
\kwd{60G30}
\kwd{60G44}
\kwd{91G20}
\end{keyword}
\begin{keyword}
\kwd{Strict local martingales}
\kwd{bubbles}
\end{keyword}
\end{frontmatter}

\section{Introduction}

The goal of this paper is to determine the influence of asset price
bubbles on the pricing of derivatives. Asset price bubbles have been
studied extensively in the economic literature looking for explanations
of why they arise, but have only recently gained attention in
mathematical finance by Cox and Hobson \cite{CoxHobson},
Pal and Protter \cite{PalP},
and Jarrow et al. \cite{JPScomp,JPffbubbles,JPSincomp}. When an asset price bubble
exists, the market price of the asset is higher than its fundamental
value. From a mathematical point of view, this is the case when the
stock price process is modeled by a positive strict local martingale
under the equivalent local martingale measure. Here, by a strict local
martingale, we understand a local martingale, which is not a true
martingale. Strict local martingales were first studied in the context
of financial mathematics by  Delbaen and Schachermayer \cite{DSbessel}.
Afterward, Elworthy et al. \cite{ELYtails,ELYradial} studied some of their
properties including their tail behaviour. More recently, the interest
in them grew again (cf., e.g., Mijatovic and Urusov \cite{Urusov}) because of
their importance in the modelling of financial bubbles.

Obviously, there are options for which well-known results regarding
their valuation in an arbitrage-free market hold true without
modification, regardless of whether the underlying is a strict local
martingale or a true martingale under the risk-neutral measure. One
example is the put option with strike $K\geq0$. If the underlying is
modeled by a continuous local martingale $X$ with $X_0=1$, it is shown
by Madan et al. \cite{MRY} that the risk-neutral value of the
put option can be expressed in terms of the last passage time of the
local martingale $X$ at level $K$ via
\[
\mathbb{E}(K-X_T)^+=\mathbb{E} \bigl((K-X_\infty)^+
\mathbh{1}_{\{
\rho_K^X\leq T\}} \bigr) \qquad \mbox{with } \rho_K^X=
\sup\{t\geq0| X_t=K\}.
\]

This formula does not require $X$ to be a true martingale, but is also
valid for strict local martingales. However, if we go from puts to
calls, the strict locality of $X$ is relevant. The general idea is to
reduce the call case to the put case by a change of measure with
Radon--Nikodym density process given by $(X_t)_{t\geq0}$ as done in
Madan et al. \cite{MRY} in the case where $X$ is a true
martingale. However, if $X$ is a strict local martingale, this does not
define a measure any more. Instead, we first have to localize the
strict local martingale and can thus only define measures on stopped
sub-$\sigma$-algebras. Under certain conditions on the probability
space, we can then extend the so-defined consistent family of measures
to a measure defined on some larger $\sigma$-field. Under the new
measure, the reciprocal of $X$ turns into a~true martingale. The
conditions we impose are taken from F{\"o}llmer \cite{exit}, who
requires the filtration to be a standard system (cf. Definition~\ref{defss}). This way we get an extension of Theorem~4 in
Delbaen and Schachermayer \cite{DSbessel} to general probability spaces and c\`adl\`ag local
martingales. We study the behavior of $X$ and other local martingales
under the new measure.

Using these technical results, we obtain decomposition formulas for
some classes of European path-dependent options under the NFLVR
condition. These formulas are extensions of Proposition~2 in Pal and Protter \cite{PalP},
which deals with nonpath-dependent options. We
decompose the option value into a difference of two positive terms, of
which the second one shows the influence of the stock price bubble.

Furthermore, we express the risk-neutral price of an exchange option in
the presence of asset price bubbles as an expectation involving the
last passage time at the strike level under the new measure. This
result is similar to the formula for call options derived by Madan,
Roynette and Yor \cite{fromto} or Yen and Yor \cite{YenYor} for the
case of reciprocal Bessel processes. We can further generalize their
formula to the case where the candidate density process for the
risk-neutral measure is only a~strict local martingale. Then the NFLVR
condition is not fulfilled and risk-neutral valuation fails, so that we
have to work under the real-world measure. Since in this case the price
of a zero coupon bond is decreasing in maturity even with an interest
rate of zero, some people refer to this as a bond price bubble as
opposed to the stock price bubbles discussed above; see, for
example, Hulley \cite{Hulley}. In this general setup, we obtain expressions
for the option value of European and American call options in terms of
the last passage time and the explosion time of the deflated price
process, which make some anomalies of the prices of call options in the
presence of bubbles evident: European calls are not increasing in
maturity any longer and the American call option premium is not equal
to zero any more; see, for example, Cox and Hobson \cite{CoxHobson}.

This paper is organized as follows: In the next section, we study
strictly positive (strict) local martingales in more detail. On the one
hand, we demonstrate ways of how one can obtain strict local
martingales, while on the other hand we construct the above mentioned
measure associated with a c\`adl\`ag strictly positive local martingale
on a general filtered probability space with a standard system as
filtration. We give some examples of this construction in Section~\ref{examples}. In Section~\ref{app1}, we then apply our results to the
study of asset price bubbles. After formally defining the financial
market model, we obtain decomposition formulas for certain classes of
European path-dependent options, which show the influence of stock
price bubbles on the value of the options under the NFLVR condition. In
Section~\ref{PQ}, we further study the relationship between the
original and the new measure constructed in Section~\ref{newmeasure},
which we apply in Section~\ref{app2} to obtain last passage time
formulas for the European and American exchange option in the presence
of asset price bubbles. Moreover, we show how this result can be
applied to the real-world pricing of European and American call
options. The last section contains some results about multivariate
strict local martingales.

\setcounter{footnote}{1}
\section{C\`adl\`ag strictly positive strict local martingales}

When dealing with continuous strictly positive strict local
martingales, a very useful tool is the result from~\cite{DSbessel}; see
also Proposition~6 in \cite{PalP}, which states that every such process
defined as the coordinate process on the canonical space of
trajectories can be obtained as the reciprocal of a ``Doob
$h$-transform''\footnote{Note that we abuse the word ``Doob
$h$-transform'' in this context slightly, since Doob $h$-transforms are
normally only defined in the theory of Markov processes.} with $h(x)=x$
of a continuous nonnegative true martingale. Conversely, any such
transformation of a continuous nonnegative martingale, which hits zero
with positive probability, yields a strict local martingale.

The goal of this section is to extend these results to c\`adl\`ag
processes and general probability spaces satisfying some extra
conditions, which were introduced in \cite{para} and used in a similar
context in \cite{exit}. While the construction of strict local
martingales from true martingales follows from an application of the
Lenglart--Girsanov theorem, the converse theorem relies as in
\cite{DSbessel} on the construction of the F\"ollmer exit measure of a
strictly positive local martingale as done in \cite{exit} and
\cite{Meyer}.

\subsection{How to obtain strictly positive strict local martingales}

Examples of continuous strict local martingales have been known for a
long time; the canonical example being the reciprocal of a Bessel
process of dimension 3. This example can be generalized to a broader
class of transient diffusions, which taken in natural scale turn out to
be strict local martingales; see, for example, \cite{ELYtails}. A
natural way to construct strictly positive continuous strict local
martingales is given in Theorem~1 of \cite{DSbessel}. There it is shown
that every uniformly integrable nonnegative martingale with positive
probability to hit zero gives rise to a change of measure such that its
reciprocal is a strict local martingale under the new measure. For the
noncontinuous case and for not necessarily uniformly integrable
martingales, we now give a simple extension of the just mentioned
theorem from \cite{DSbessel}.

%
\begin{thm}\label{thm1}
Let $(\Omega,\mathcal{F},(\mathcal{F}_t)_{t\geq0},\mathsf{Q})$ be
the natural augmentation
of some filtered probability space with $\mathcal{F}=\bigvee_{t\geq
0}\mathcal{F}_t$,
that is, the filtration $({\mathcal{F}}_t)$ is right-continuous and
${\mathcal{F}}_0$
contains all ${\mathcal{F}}_t$-negligible sets for all $t\geq0$. Let
$Y$ be a
nonnegative $\mathsf{Q}$-martingale starting from $Y_0=1$. Set $\tau
=\inf\{t\geq0\dvtx Y_t=0\}$ and assume that $\mathsf{Q}(\tau<\infty)>0$.
Furthermore, suppose that $Y$ does not jump to zero $\mathsf{Q}$-almost
surely. For all $t\geq0$, define a probability measure $\mathsf{P}_t$
on $\mathcal{F}
_t$ via $\mathsf{P}_t=Y_t.\mathsf{Q}|_{\mathcal{F}_t}$; in
particular, $\mathsf{P}_t\ll\mathsf{Q}|_{{\mathcal{F}
}_t}$. Assume that either $Y$ is uniformly integrable under ${\mathsf
{Q}}$ or
that the nonaugmented probability space satisfies condition
(P).\footnote{Condition $(P)$ first appeared in \cite{para} and was
later used in \cite{newkind}. We recall its definition in the \hyperref[AppP]{Appendix}.} Then we can extend the consistent family
$(\mathsf{P}_t)_{t\geq0}$ to a measure $\mathsf{P}$ on the augmented space
$(\Omega,\mathcal{F}
,(\mathcal{F}_t)_{t\geq0})$. Under the measure $\mathsf{P}$ the process $Y$ does never reach
zero and
its reciprocal $1/Y$ is a strict local $\mathsf{P}$-martingale.
\end{thm}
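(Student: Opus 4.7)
The plan is to proceed in three stages: extend the consistent family $(\p_t)$ to a probability $\p$ on $\F$, verify that $X$ does not vanish under $\p$, and establish that $1/X$ is a strict local $\p$-martingale.

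\emph{Consistency and extension.} First I would note that for $s \leq t$ and $A \in \F_s$, the $\q$-martingale property gives $\p_t(A) = \E_\q[X_t \1_A] = \E_\q[X_s \1_A] = \p_s(A)$, so $(\p_t)_{t \geq 0}$ is a consistent projective family. In the uniformly integrable case, $X_t \to X_\infty$ in $L^1(\q)$ and $\p := X_\infty \cdot \q$ directly restricts to $\p_t$ on each $\F_t$. In the general case there is no $\q$-integrable terminal density, and one must instead invoke Parthasarathy's extension theorem, in the spirit of F\"ollmer~\cite{exit}: condition (P) on the non-augmented space is precisely what guarantees extension of a consistent projective family on $(\F_t)_{t \geq 0}$ to a measure on $\F = \bigvee_t \F_t$, and the extension to the augmented space follows from a routine null-set argument. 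This extension is the principal technical obstacle; the remaining stages proceed by standard martingale manipulations.

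\emph{$X$ does not vanish under $\p$.} Because $X \geq 0$ is a $\q$-martingale and does not jump to zero, one has $X_\tau = 0$ on $\{\tau < \infty\}$, and then $\E_\q[X_t \mid \F_\tau] = X_\tau = 0$ combined with $X \geq 0$ forces $X_t = 0$ $\q$-a.s.\ on $\{\tau \leq t\}$ for every $t$. Therefore
\[
\p(\tau \leq t) = \p_t(\tau \leq t) = \E_\q[X_t \1_{\{\tau \leq t\}}] = 0,
\]
and letting $t \to \infty$ yields $\p(\tau < \infty) = 0$.

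\emph{Local martingality and strictness.} I would localize with $\sigma_n := \inf\{t : X_t \leq 1/n\}$; the no-jump-to-zero hypothesis forces $X_{\tau-} = 0$ on $\{\tau < \infty\}$, whence $\sigma_n \uparrow \tau = \infty$ $\p$-a.s. To show that $M^n_t := 1/X_{t \wedge \sigma_n}$ is a $\p$-martingale, I would use that optional stopping promotes $\p_u = X_u \cdot \q|_{\F_u}$ to $\p|_{\F_{u \wedge \sigma_n}} = X_{u \wedge \sigma_n} \cdot \q|_{\F_{u \wedge \sigma_n}}$ for every $u$. For $A \in \F_s$ and $s \leq t$, decompose $A = (A \cap \{\sigma_n > s\}) \cup (A \cap \{\sigma_n \leq s\})$: the second piece contributes nothing since $M^n_t = M^n_s$ there, while the first lies in $\F_{s \wedge \sigma_n}$, yielding
\[
\E_\p[\1_{A \cap \{\sigma_n > s\}} M^n_t] = \q(A \cap \{\sigma_n > s\}) = \E_\p[\1_{A \cap \{\sigma_n > s\}} M^n_s].
\]
This delivers the local martingale property. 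For strictness, I would compute $\E_\p[1/X_t] = \E_\q[(X_t/X_t)\,\1_{\{X_t > 0\}}] = \q(\tau > t)$, which is strictly less than $1 = 1/X_0$ once $t$ is large enough, since $\q(\tau < \infty) > 0$.
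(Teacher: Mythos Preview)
Your proof is correct and follows essentially the same route as the paper: the same extension argument (Parthasarathy/F\"ollmer, which the paper packages as Corollary~4.9 in \cite{newkind}), the same computation $\p(\tau\le t)=\E^\q[X_t\indic_{\{\tau\le t\}}]=0$, the same localizing sequence $\sigma_n=\inf\{t:X_t\le 1/n\}$, and the identical strictness computation $\E^\p[1/X_t]=\q(\tau>t)$. The only cosmetic difference is that the paper obtains the local martingale property of $1/X$ by invoking a Lenglart--Girsanov corollary from Jacod--Shiryaev (observing that $X_{t\wedge\sigma_n}\cdot(1/X_{t\wedge\sigma_n})\equiv 1$), whereas you verify $\E^\p[\indic_A M^n_t]=\q(A\cap\{\sigma_n>s\})=\E^\p[\indic_A M^n_s]$ by hand; both amount to the same one-line identity.
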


\begin{pf}
Since the underlying probability space satisfies the natural
assumptions, we may choose a c\`adl\`ag version of $Y$; see, for
example, Propositions 3.1~and~3.3 in \cite{newkind}. Especially, this
means that $\tau$ is a well-defined stopping time. If $Y$ is a
uniformly integrable martingale, the measure $\mathsf{P}$ is defined
on $\mathcal{F}$
by $d\mathsf{P}=Y_\infty \,d\mathsf{Q}$. In the other case, when the
probability space
fulfills condition $(P)$, the existence of the measure $\mathsf{P}$ follows
from Corollary~4.9 in \cite{newkind}.
Moreover, note that
\[
\mathsf{P}(\tau<\infty)=\lim_{t\rightarrow\infty}\mathsf{P}(\tau \leq t)=\lim
_{t\rightarrow
\infty}\mathbb{E}^\mathsf{Q} (\mathbh{1}_{\{\tau\leq t\}
}Y_t
)=0,
\]
therefore, the process $1/Y$ is a $\mathsf{P}$-almost surely well-defined
semi-martingale.
The result now follows from Corollary~3.10 in Chapter III of \cite{JaSh} applied to $M'_t:=\frac{1}{Y_t}\mathbh{1}_{\{\tau>t\}}$, once
we can
show that $(M'_{t\wedge\tau_n} Y_{t\wedge\tau_n})$ with $\tau
_n=\inf\{t\geq0\dvtx Y_t\leq\frac{1}{n}\}$ is a local $\mathsf{Q}$-martingale
for every $n\in\mathbb{N}$. But,
\[
M'_{t\wedge\tau_n} Y_{t\wedge\tau_n}=\mathbh{1}_{\{\tau>t\wedge
\tau_n\}
}=1\qquad
\mathsf{Q}\mbox{-a.s.},
\]
because $Y$ does not jump to zero $\mathsf{Q}$-almost surely. This trivially
proves the martingale property. Finally, the strictness of the local
martingale $1/Y$ under $\mathsf{P}$ follows from
\[
\mathbb{E}^\mathsf{P} \biggl(\frac{1}{Y_t} \biggr)=\mathsf{Q}(
\tau>t)<1
\]
for $t$ large enough, since by assumption $\mathsf{Q} (\tau
<\infty
 )>0$.
\end{pf}

Starting with a Brownian motion stopped at zero under $\mathsf{Q}$, it
is easy
to show that the associated strict local martingale under $\mathsf{P}$
is the
reciprocal of the three-dimensional Bessel process, which is the
canonical example of a strict local martingale (cf. Example~1 in \cite{PalP}). Without stating the general result, the above construction is
also applied in \cite{Chybi} to construct examples of strict local
martingales with jumps related to Dunkl Markov processes on the one
hand (cf. Proposition~3 in \cite{Chybi}) and semi-stable Markov
processes on the other hand (cf. Proposition~5 in~\cite{Chybi}). Apart
from the previous, there do not seem to be any well-known examples of
strict local martingales with jumps. Note, however, that one can
construct an example by taking any continuous strict local martingale
and multiplying it with the stochastic exponential of an independent
compound Poisson process or any other independent and strictly positive
jump martingale.

In the following example, we construct a ``nontrivial'' positive
strict local martingale with jumps by a shrinkage of filtration.

%
\begin{ex}
Consider the well-known reciprocal three-dimensional Bessel process $Y$
as a function of a three-dimensional standard Brownian motion
$B=(B^1,B^2,B^3)$ starting from $B_0=(1,0,0)$, that is,
\[
Y = \frac{1}{\sqrt{(B^1)^2+(B^2)^2+(B^3)^2}}.
\]
We define the filtrations $(\mathcal{F}_t)_{t\geq0}$ and $(\mathcal
{G}_t)_{t\geq0}$
through $\mathcal{F}_t=\sigma(B^1_s,B^2_s,B^3_s; s\leq t)$ and
$\mathcal{G}_t=\sigma
(B^1_s,B^2_s;s\leq t)$, as well as the filtration $(\mathcal{H}_t)_{t\geq0}$ through
\[
\mathcal{H}_t=\mathcal{F}_{{\lfloor nt\rfloor}/{n}}\vee \mathcal{G}_t=
\sigma \biggl(B^1_s,B^2_s,s\leq
t; B^3_u,u\leq\frac{\lfloor nt\rfloor}{n} \biggr)
\]
for some $n\in\mathbb{N}$. It is shown in Theorem~15 of \cite{FollmerProtter} that not only $Y$ itself is a strict local $(\mathcal{F}_t)_{t
\geq0}$-martingale, but that also the optional projection of $Y$ onto
$(\mathcal{G}_t)_{t \ge0}$ is a continuous local $(\mathcal{G}_t)_{t
\geq
0}$-martingale. Since $\mathcal{G}_t\subset\mathcal{H}_t\subset
\mathcal{F}_t$ for $t \geq0$,
it follows by Corollary~2 of \cite{FollmerProtter} that then the
optional projection of $Y$ onto $(\mathcal{H}_t)_{t\geq0}$, denoted by~$^{\circ}Y$, is also a local martingale. However, since its
expectation process is decreasing, $^{\circ}Y$ must be a strict local
martingale that jumps at $t\in\frac{\mathbb{N}}{n}$. Indeed, since
$B^3$ is
a~Brownian motion independent of $B^1$ and $B^2$, $B_t^3$ given
$\mathcal{H}_t$
is normally distributed with mean $B^3_{{\lfloor nt\rfloor}/{n}}$
and variance $t-\frac{\lfloor nt\rfloor}{n}$. Therefore,
$^\circ Y$ is given by the explicit formula $^\circ
Y_t=u(B^1_t,B^2_t,B^3_{{\lfloor nt\rfloor}/{n}},t)$,
where
\begin{eqnarray*}
u(x,y,a,t)
&= &  \int_\mathbb{R}  \bigl(x^2+y^2+z^2
\bigr)^{-1/2}\\
&&\hspace*{10pt}{}\times\sqrt{\frac
{1}{2\pi
 (t-{\lfloor nt\rfloor}/{n} )}}\exp \biggl(-
\frac
{1}{2 (t-{\lfloor nt\rfloor}/{n} )}(z-a)^2 \biggr)\,dz.
\end{eqnarray*}
\end{ex}

%
\begin{rem}
In the recent preprint \cite{Protterjumps}, the method of filtration
shrinkage is applied in greater generality to construct more
sophisticated examples of strict local martingales with jumps.
\end{rem}

%
\begin{ex}
As a further example, any nonnegative nonuniformly integrable
$(\mathcal{F}
_t)_{t\geq0}$-martingale $Z$ with $Z_0=1$ allows to construct a
strictly positive strict local martingale $Y$ relative to a new
filtration $(\tilde{\mathcal{F}}_t)_{t\geq0}$ through a
deterministic change
of time: simply set
\[
Y_t= %
\cases{\ds\frac{1}{2} (1+Z_{{t}/({1-t})} ), &\quad $0
\leq t<1$,
\vspace*{3pt}\cr
\ds\frac{1}{2} \Bigl(1+\lim_{t\rightarrow\infty}Z_t
\Bigr), & \quad $1\leq t$}
\]
and define $\tilde{\mathcal{F}}_t=\mathcal{F}_{{t}/({1-t})}$ for
$t<1$ and $\tilde
{\mathcal{F}}_t=\mathcal{F}_\infty$ for $t\geq1$. Since $Z$ is
\textit{not}
uniformly integrable, we have $\mathbb{E}Y_1<Y_0=Z_0=1$ almost surely. Note,
however, that $Y$ is a~true martingale on the interval $[0,1)$. Instead
of setting $Y$ constant for $t\geq1$ one can also define $Y$ to behave
like any other strictly positive local martingale starting from
$Y_1:=\frac{1}{2}(1+\lim_{t\rightarrow\infty}Z_t)$ on $[1,\infty)$.
\end{ex}

\subsection{From strictly positive strict local martingales to true
martingales}\label{newmeasure}

In the following, let $(\Omega,\mathcal{F},(\tilde{\mathcal
{F}}_t)_{t\geq0},\mathsf{P})$ be
a filtered probability space. Furthermore, we denote by $({\mathcal{F}
}_t)_{t\geq0}$ the right-continuous augmentation of $(\tilde{\mathcal{F}
}_t)_{t\geq0}$, that is, ${\mathcal{F}}_t:=\tilde{\mathcal
{F}}_{t+}=\bigcap_{s>t}\tilde{\mathcal{F}}_s$ for all $t\geq0$. Note, however, that the
filtration is \textit{not} completed with the negligible sets of
$\mathcal{F}$.

%
\begin{df}[(cf. \cite{exit})]\label{defss}
 Let $T$ be a partially ordered nonvoid index
set and let $(\tilde{\mathcal{F}}_t)_{t\in T}$ be a filtration on
$\Omega$.
Then $(\tilde{\mathcal{F}}_t)_{t\in T}$ is called a standard system if:
\begin{itemize}
\item each measurable space $(\Omega,\tilde{\mathcal{F}}_t)$ is a standard
Borel space, that is, $\tilde{\mathcal{F}}_t$ is $\sigma$-isomorphic
to the
$\sigma$-field of Borel sets on some complete separable metric space;
\item for any increasing sequence $(t_i)_{i\in\mathbb{N}}\subset T$
and for
any $A_1\supset A_2\supset\cdots\supset A_i\supset\cdots\,$, where
$A_i$ is an atom of $\tilde{\mathcal{F}}_{t_i}$, we have $\bigcap_i
A_i\neq
\varnothing$.
\end{itemize}
\end{df}

As noted in \cite{newkind}, the filtration $\tilde{\mathcal
{F}}_t=\sigma
(X_s,s\leq t)$, where $X_t(\omega)=\omega(t)$ is the coordinate
process on the space $C(\mathbb{R}_+,\mathbb{R}_+)$ of nonexplosive
nonnegative
continuous functions, is not a standard system. However, it will be
seen below that when dealing with strict local martingales it is
natural to work on the space of all $\overline{\mathbb{R}}_+=\mathbb
{R}_+\cup\{\infty\}
$-valued processes that are continuous up to some time $\alpha\in
[0,\infty]$ and constant afterward. As noted in example (6.3) in \cite{exit}, the filtration generated by the coordinate process on this
space is indeed a standard system. More generally, we have the
following lemma.

%
\begin{lem}\label{canonical}
Let $\Omega=D'(\mathbb{R}_+,\overline{\mathbb{R}}_+^n)$ be the
space of functions from
$\mathbb{R}_+$ into $\overline{\mathbb{R}}_+^n$ with componentwise
right-continuous paths
$(\omega_i(t))_{t\geq0}, i=1,\ldots,n$, that
have left limits on $(0,\alpha(\omega))$ for some \mbox{$\alpha
(\omega)\in[0,\infty]$} and remain constant on $[\alpha(\omega
),\infty)$ at the value $\lim_{t\uparrow\alpha(\omega)}\omega
_i(t)$ if this limit exists and at $\infty$ otherwise.
We denote by $(X_t)_{t\geq0}$ the coordinate\vspace*{1pt} process, that
is, $X_t(\omega_1,\ldots,\omega_n)=(\omega_1(t),\ldots,\omega
_n(t))$, and by $(\tilde{\mathcal{F}}_t)_{t\geq0}$ the canonical filtration
generated by the coordinate process, that is, $\tilde{\mathcal
{F}}_t=\sigma
(X_s; s\leq t)$.
Furthermore, set $\mathcal{F}=\bigvee_{t\geq0}\tilde{\mathcal
{F}}_t$. Then, $(\tilde
{\mathcal{F}}_t)_{t\geq0}$ is a standard system on the space $(\Omega
,\mathcal{F}
,(\tilde{\mathcal{F}}_t)_{t\geq0})$.
The same is true, if we replace $D'(\mathbb{R}_+,\overline{\mathbb
{R}}_+^n)$ by its
subspace $C'(\mathbb{R}_+,\overline{\mathbb{R}}_+^n)$ of functions
which are componentwise
continuous on some $(0,\alpha(\omega))$ and remain constant on
$[\alpha(\omega),\infty)$ at the value $\lim_{t\uparrow\alpha
(\omega)}\omega_i(t)$ if this limit exists and at $\infty$ otherwise.
\end{lem}

\begin{pf}
We prove the claim for $\Omega=D'(\mathbb{R}_+,\overline{\mathbb
{R}}_+^n)$. The case
$\Omega=\break C'(\mathbb{R}_+,\overline{\mathbb{R}}_+^n)$ is done in a
similar way.
As in \cite{dell}, we define a bijective mapping $i$ from~$\Omega$ to
some subspace $A\subset(\overline{\mathbb{R}}_+^n)^\mathbb{Q}$
(where here $\mathbb{Q}$ denotes
the set of all rational numbers), via $\omega\mapsto(X_r(\omega
))_{r\in\mathbb{Q}}$. It is clear that $i$ is bijective and we have
$\mathcal{F}
=i^{-1}(\mathcal{B}(A))$.
Furthermore, a sequence $A_1\supset A_2\supset\cdots\supset A_i\supset
\cdots$ of atoms of $\mathcal{F}_{t_i}=\sigma(X_s; s\leq t_i)$
defines a
component-wise c\`adl\`ag function on the interval $[0,\lim{t_i}]\cap
[0,\alpha(\omega))$, which is constant on $[0,\lim t_i]\cap[\alpha
(\omega),\infty)$, for every increasing sequence $(t_i)_{i\in\mathbb{N}
}\subset\mathbb{R}_+$. This function can easily be extended to an
element of
$D'(\mathbb{R}_+,\overline{\mathbb{R}}_+^n)$.
\end{pf}

Recall that for any $(\mathcal{F}_t)_{t \geq0}$-stopping time $\tau$ the
sigma-algebra $\mathcal{F}_{\tau-}$ is defined as
\[
\mathcal{F}_{\tau-}=\sigma\bigl(\tilde{\mathcal{F}}_0, \bigl\{\{\tau >t\}\cap\Gamma\dvtx \Gamma\in\mathcal{F}_t, t>0 \bigr\}\bigr).
\]

%
\begin{lem}[(cf. \cite{exit}, Remark~6.1)] \label{standard}
Let $(\tilde{\mathcal{F}}_t)_{t\geq0}$
be a standard system on~$\Omega$. Then for any increasing sequence
$(\tau_n)_{n\in\mathbb{N}}$ of $({\mathcal{F}}_t)$-stopping times
the family $(\mathcal{F}
_{\tau_{n-}})_{n\in\mathbb{N}}$ is also a standard system.
\end{lem}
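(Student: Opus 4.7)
The plan is to verify the two defining properties of a standard system for the family $(\F_{\tau_n-})_{n\in\N}$. The key preliminary observation is that $\tau_n$ is $\F_{\tau_n-}$-measurable, since $\{\tau_n > t\} \in \F_t \subset \F_{\tau_n-}$ for every $t \geq 0$; consequently $\tau_n$ is constant on every atom $A$ of $\F_{\tau_n-}$, with some value $s(A) \in [0,\infty]$. For the standard Borel property of each $\F_{\tau_n-}$, I would reduce the generators $\{\tau_n > t\} \cap \Gamma$ with $\Gamma \in \F_t$ to rational $t$ using right-continuity of $(\F_t)$, observe that each $\F_t = \bigcap_{s>t, s\in\Q}\tilde{\F}_s$ is countably generated (since each $\tilde{\F}_t$ is, by the standard Borel hypothesis), and conclude that $\F_{\tau_n-}$ is itself countably generated and atom-separating, hence standard Borel by a Parthasarathy-type theorem.

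For the atom intersection property, let $A_1 \supset A_2 \supset \cdots$ be a decreasing sequence with $A_n$ an atom of $\F_{\tau_n-}$, and set $s_n := s(A_n)$ and $s_\infty := \sup_n s_n$. For any rational $t < s_\infty$ and any $n$ large enough that $s_n > t$, the atom $A_n$ is contained in $\{\tau_n > t\}$, so for each $\Gamma \in \F_t$ the generator $\{\tau_n > t\} \cap \Gamma$ of $\F_{\tau_n-}$ forces either $A_n \subset \Gamma$ or $A_n \cap \Gamma = \emptyset$. Hence $A_n$ lies in a unique $\F_t$-atom $B^t$, independent of $n$ once $s_n > t$ (since atoms are equal or disjoint), and $B^t$ sits inside the $\tilde{\F}_t$-atom $\tilde{B}^t$ containing $A_n$. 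Picking rationals $t_k \uparrow s_\infty$, the atoms $\tilde{B}^{t_k}$ form a decreasing sequence with $\tilde{B}^{t_k}$ an atom of $\tilde{\F}_{t_k}$, so the standard system property of $(\tilde{\F}_t)$ yields $\omega_0 \in \bigcap_k \tilde{B}^{t_k}$; the monotonicity $\tilde{B}^{t_k} \subset \tilde{B}^r$ for rational $r \leq t_k$ then upgrades this to $\omega_0 \in \tilde{B}^r$, and hence $\omega_0 \in B^r$, for every rational $r < s_\infty$.

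It remains to show $\omega_0 \in A_n$ for each $n$, which I expect to be the main obstacle. The easy generators of $\F_{\tau_n-}$ are those of the form $\{\tau_n > t\}\cap\Gamma$ with rational $t < s_n$: for these, $\omega_0 \in B^t \subset \{\tau_n > t\}$ and $B^t$ either contains $\Gamma$ or is disjoint from it in the same way as $A_n$, so the required equivalence holds. The delicate case is generators with $t \geq s_n$, which demand $\omega_0 \notin \{\tau_n > t\}$, i.e.\ $\tau_n(\omega_0) \leq s_n$. My plan is to extract this from a right-continuity argument: applying the $B^\cdot$-construction to the subsequence $(A_m)_{m \geq n}$ along rationals $r \downarrow s_n$ (when $s_n < s_\infty$), the resulting tighter atoms force $\omega_0$ to agree with points of $A_n$ on $\F_{s_n}$, and in particular $\tau_n(\omega_0) = s_n$. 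The boundary case $s_n = s_\infty < \infty$ is handled by exhausting the generators of $\F_{\tau_n-}$ along rationals below $s_\infty$, while $s_\infty = \infty$ is immediate since then every generator $\{\tau_n>t\}\cap\Gamma$ with $t \geq s_n$ is covered by some rational $t_k > s_n$ in our sequence.
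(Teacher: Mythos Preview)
The paper does not actually prove this lemma; it simply records it with the citation ``cf.\ \cite{exit}, Remark 6.1'' and moves on. So there is no in-paper argument to compare against, and your proposal should be judged on its own.

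Your outline is broadly the right strategy --- identify $s_n=\tau_n|_{A_n}$, reduce the atom $A_n$ to the description $\{\tau_n=s_n\}\cap\bigcap_{t<s_n}B^t$ with $B^t$ the $\F_t$-atom containing $A_n$, and feed the $B^{t_k}$ into the standard-system hypothesis for $(\tilde\F_t)$. The subcases $s_n<s_\infty$ and $s_\infty=\infty$ are handled correctly: in the first you can pick a rational $r\in(s_n,s_\infty)$ and use that $\{\tau_n=s_n\}\in\F_r$ together with $A_m\subset B^r$ for suitable $m$ to force $B^r\subset\{\tau_n=s_n\}$; in the second, $B^t\subset\{\tau_n>t\}$ for every $t$ gives $\tau_n(\omega_0)=\infty$.

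The gap is the ``boundary case'' $s_n=s_\infty<\infty$ for all large $n$. Your one-line claim that it ``is handled by exhausting the generators of $\F_{\tau_n-}$ along rationals below $s_\infty$'' does not go through. Membership in $A_n$ requires $\tau_n(\omega_0)=s_n$, and the inequality $\tau_n(\omega_0)\le s_n$ needs the set $\{\tau_n\le s_n\}$, which lives in $\F_{s_n}=\F_{s_\infty}$ and is \emph{not} captured by $\bigvee_{t<s_\infty}\F_t$. Concretely, take $\Omega=\{a,b\}$, $\tilde\F_t$ trivial for $t<1$ and $\tilde\F_t=2^\Omega$ for $t\ge 1$, and $\tau_n(a)=1$, $\tau_n(b)=2$ for all $n$. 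Then $\F_{\tau_n-}=2^\Omega$, the atoms are $A_n=\{a\}$ with $s_n=s_\infty=1$, but $\bigcap_{t<1}B^t=\Omega$, so your $\omega_0$ could be $b$, which has $\tau_n(b)=2\ne 1$ and lies outside every $A_n$. The lemma is of course true here, but your construction of $\omega_0$ does not see the constraint at time $s_\infty$.

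A repair has to bring the level $s_\infty$ itself (equivalently, information from $\F_{s_\infty}$) into the sequence fed to the standard-system hypothesis; simply approaching $s_\infty$ from below is not enough. One clean route is to observe that each $A_n$ is $\F_{s_n}$-measurable (since $\{\tau_n=s_n\}\in\F_{s_n}$ and the $\F_{s_n-}$-atom is $\F_{s_n}$-measurable in a standard Borel setting), and then to produce a \emph{decreasing} sequence of $\F_{s_n}$-atoms $D_n\subset A_n$; the obstacle is guaranteeing $D_n\cap A_{n+1}\neq\emptyset$ at each step, which needs an additional argument beyond what you have sketched. This is exactly the point where F\"ollmer's original Remark 6.1 does the work, and you should either reproduce that argument or cite it rather than assert the boundary case is immediate.

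A smaller caveat: ``countably generated and atom-separating $\Rightarrow$ standard Borel'' is not automatic; you are implicitly using that $\F_{\tau_n-}$ sits inside some $\tilde\F_s$ which is already standard Borel, so that a countably generated sub-$\sigma$-algebra is again standard Borel. It would be worth making that step explicit.
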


\begin{nota*}
When working on the subspace $(\Omega,\mathcal{F}_{\tau-})$ of
$(\Omega,\mathcal{F}
)$, where $\tau$ is some $(\mathcal{F}_t)$-stopping time, we must
restrict the
filtration to $(\mathcal{F}_{t\wedge\tau-})_{t\geq0}$, where with a slight
abuse of notation we set $\mathcal{F}_{t\wedge\tau-}:=\mathcal
{F}_t\cap\mathcal{F}_{\tau-}$.
In the following, we may also write $(\mathcal{F}_t)_{0\leq t<\tau}$
for the
filtration on $(\Omega,\mathcal{F}_{\tau-},\mathsf{P})$.
\end{nota*}

Working with standard systems will allow us to derive for every
strictly positive strict local $\mathsf{P}$-martingale the existence
of a
measure $\mathsf{Q}$ on $ (\Omega,{\mathcal{F}}_{\tau
-},\break ({\mathcal{F}}_{t})_{0\leq
t<\tau} )$, such that the reciprocal of the strict local
$\mathsf{P}$-martingale is a true $\mathsf{Q}$-martingale. In Section~\ref{app1}, we
will use this result to reduce calculations involving strict local
martingales to the much easier case of true martingales.

From Theorem~4 in \cite{DSbessel} and Proposition~6 in \cite{PalP},
we know that every continuous local martingale understood as the
canonical process on $C(\mathbb{R}_+,\overline{\mathbb{R}}_+)$ gives
rise to a new measure
under which its reciprocal turns into a true martingale.
In the context of arbitrage theory, similar results have recently been
derived and applied by \cite{FernKar} and \cite{Ruf} for continuous
processes in a Markovian setting.
Theorem~\ref{komplett} below is an extension of these results to more
general probability spaces and c\`adl\`ag processes.
Its proof relies on the construction of the F\"ollmer measure
(cf. \cite{exit} and \cite{Meyer}); nevertheless, we will give a
detailed proof, since it is essential for the rest of the paper.

%
\begin{prop}\label{dom}
Let $ (\Omega,\mathcal{F},(\tilde{\mathcal{F}}_t)_{t\geq
0},\mathsf{P} )$ be a
filtered probability space and assume that $(\tilde{\mathcal
{F}}_t)_{t\geq0}$
is a standard system.
Let $X$ be a c\`adl\`ag local martingale on the space $(\Omega
,\mathcal{F}
,({\mathcal{F}}_{t})_{t\geq0},\mathsf{P})$ with values in $(0,\infty
)$ and $X_0=1$
$\mathsf{P}$-almost surely.
We define $\tau^X_n:=\inf\{t\geq0\dvtx X_t>n\}\wedge n$ and $\tau
^X=\lim_{n\rightarrow\infty}\tau^X_n$.
Then there exists a unique probability measure ${\mathsf{Q}}$ on
$
(\Omega,{\mathcal{F}}_{\tau^X-},({\mathcal{F}}_{t\wedge\tau
^X-})_{t\geq0}
)$, such that $\frac{d{\mathsf{P}}}{d{\mathsf{Q}}}|_{{\mathcal
{F}}_t\cap{\mathcal{F}}_{\tau
^X-}}=\frac{1}{X_t}\mathbh{1}_{\{t<\tau^X\}}$ for all $t\geq0$. Moreover,
$1/X$ is a local $\mathsf{Q}$-martingale on the interval $[0,\tau^X)$ which
does not jump to zero $\mathsf{Q}$-almost surely.
\end{prop}

\begin{pf}
First, note that $\tau^X_n$ is an $(\mathcal{F}_t)_{t\geq
0}$-stopping time
and the process $(X_{t\wedge\tau^X_n})_{t\geq0}$ is a uniformly
integrable $ \{(\mathcal{F}_t)_{t\geq0},\mathsf{P} \}
$-martingale for all
$n\in\mathbb{N}$. Indeed, if $(\sigma_m)$ is any localizing sequence
for $X$
such that $\mathbb{E}^\mathsf{P}X_{\sigma_m}=1$ for all $m\in
\mathbb{N}$, then
\[
X_{\tau_n^X\wedge\sigma_m}\leq n\vee X_{\tau_n^X} \quad \mbox{and}  \quad \mathbb{E}
^\mathsf{P}(n\vee X_{\tau_n^X})\leq n+\mathbb{E}^\mathsf{P}X_{\tau
_n^X}
\leq n+1
\]
by the super-martingale property of $X$. By the dominated convergence
theorem, we thus conclude that $\mathbb{E}^\mathsf{P}X_{\tau
_n^X}=1$, and thus
$(\tau_n^X)$ is a localizing sequence as well.

Furthermore, $\mathsf{P}(\tau^X=\infty)=1$, since a positive c\`adl\`ag
local martingale does not explode almost surely. We define on $(\Omega
,{\mathcal{F}}_{\tau^X_n})$ the probability measure $\tilde{\mathsf
{Q}}_n$ via
$\tilde{\mathsf{Q}}_n=X_{\tau^X_n}\cdot {\mathsf{P}}|_{{\mathcal
{F}}_{\tau^X_n}}$ for all $n\in
\mathbb{N}$. The family $(\tilde{\mathsf{Q}}_n)_{n\in\mathbb{N}}$
constitutes a consistent
family of probability measures on $(\mathcal{F}_{\tau^X_n})_{n\geq
1}$: If
$A\in{\mathcal{F}}_{\tau^X_n}$, then
\[
\tilde{\mathsf{Q}}_{n+k}(A)=\mathbb{E}^{{\mathsf{P}}}
(X_{\tau
^X_{n+k}}\mathbh{1}_A )=\mathbb{E}^{{\mathsf{P}}}(X_{\tau^X_n}
\mathbh{1}_A)=\tilde {\mathsf{Q}}_n(A),
\]
that is, $\tilde{\mathsf{Q}}_{n+k}|_{{\mathcal{F}}_{\tau
^X_n}}=\tilde{\mathsf{Q}}_n$ for
all $n,k\in\mathbb{N}$. This induces a sequence of consistently defined
measures $(\mathsf{Q}_n)_{n\in\mathbb{N}}$ on the sequence
$(\mathcal{F}_{\tau^X_n-})_{n\in
\mathbb{N}}$, which is a standard system by Lemma~\ref{standard}.
Note that
$\mathcal{F}_{\tau^X-}=\bigvee_{n\geq1}\mathcal{F}_{\tau^X_n-}$,
since\vspace*{1pt} $(\tau
^X_n)_{n\geq1}$ is increasing. 
We can thus apply Theorem~3.2 together with Theorem~4.1 in Chapter V of
\cite{para} (cf. also Theorem~6.2 in \cite{exit}), which yield the
existence of a unique measure $\mathsf{Q}$ on $ (\Omega,\mathcal
{F}_{\tau
^X-},(\mathcal{F}_{t\wedge\tau^X-})_{t\geq0} )$ such that
$\mathsf{Q}|_{\mathcal{F}
_{\tau^X_n-}}=\mathsf{Q}_n=\tilde{\mathsf{Q}}_n|_{\mathcal
{F}_{\tau^X_n-}}$. Moreover,
since $\{\tau^X_n<\tau^X_m\}\in\mathcal{F}_{\tau^X_m-}$,
\begin{eqnarray*}
&& \mathsf{Q}\bigl(\tau^X_n<\tau^X\bigr)
\\
&& \qquad =\lim
_{m\rightarrow\infty}\mathsf {Q}\bigl(\tau^X_n<\tau
^X_m\bigr)=\lim_{m\rightarrow\infty}\tilde{
\mathsf{Q}}_m\bigl(\tau^X_n<\tau
^X_m\bigr)= \lim_{m\rightarrow\infty}
\mathbb{E}^{{\mathsf{P}}} (\mathbh{1}_{\{
\tau^X_n<\tau^X_m\}}X_{\tau
^X_m} )
\\
&&\qquad = \lim_{m\rightarrow\infty}\mathbb{E}^{{\mathsf{P}}} (\mathbh
{1}_{\{\tau^X_n<\tau^X_m\}
}X_{\tau^X_n} )= \mathbb{E}^{{\mathsf{P}}} (
\mathbh{1}_{\{\tau^X_n<\tau^X\}
}X_{\tau^X_n} )= \mathbb{E}^{{\mathsf{P}}}
(X_{\tau^X_n} )=1,
\end{eqnarray*}
that is, $1/X$ does not jump to zero under $\mathsf{Q}$. Therefore, if
$\Lambda_n\in\mathcal{F}_{\tau^X_n}$, then
\begin{eqnarray*}
\mathsf{Q}(\Lambda_n)&=&\mathsf{Q} \bigl(\Lambda_n\cap
\bigl\{\tau ^X>\tau^X_n\bigr\} \bigr)= \lim
_{m\rightarrow\infty}\mathsf{Q} \bigl(\Lambda_n\cap\bigl\{\tau
^X_m>\tau^X_n\bigr\} \bigr)
\\
&=& \lim_{m\rightarrow\infty}\mathbb{E}^{{\mathsf
{P}}} (X_{\tau^X_m}
\mathbh{1} _{\Lambda_n}\mathbh{1}_{\{\tau^X_m>\tau^X_n\}} ) =\lim_{m\rightarrow\infty}
\mathbb{E}^{{\mathsf{P}}} (X_{\tau
^X_n}\mathbh{1}_{\Lambda_n}\mathbh{1}
_{\{\tau^X_m>\tau^X_n\}} )
\\
&=& \mathbb{E}^{{\mathsf{P}}} (X_{\tau^X_n}\mathbh{1}_{\Lambda
_n} )=
\tilde{\mathsf{Q} }_n(\Lambda_n).
\end{eqnarray*}
Therefore, $\mathsf{Q}|_{\mathcal{F}_{\tau^X_n}}=\tilde{\mathsf
{Q}}_n$ for all $n\in\mathbb{N}$.

Now let $S$ be an $({\mathcal{F}}_t)_{t\geq0}$-stopping time. Note
that $\{
S<\tau^X_n\}\in{\mathcal{F}}_S$ and $\{S<\tau^X_n\}\in{\mathcal
{F}}_{\tau^X_n}$. Thus,
\begin{eqnarray*}
{\mathsf{Q}}\bigl(S<\tau^X_n\bigr)&=&
\tilde{\mathsf{Q}}_n\bigl(S<\tau ^X_n\bigr)=
\mathbb{E}^{{\mathsf{P}}} (\mathbh{1} _{\{S<\tau^X_n\}}X_{\tau^X_n} )=
\mathbb{E}^{{\mathsf
{P}}} \bigl(\mathbh{1}_{\{S<\tau
^X_n\}}\mathbb{E}^{{\mathsf{P}}}(X_{\tau^X_n}|
\mathcal{F}_S) \bigr)
\\
&=&\mathbb{E}^{{\mathsf{P}}} (\mathbh{1} _{\{S<\tau^X_n\}}X_S ).
\end{eqnarray*}
Since $\mathsf{P}(\tau^X_n<\tau^X=\infty) = 1$, taking the limit as
$n\rightarrow
\infty$ in the above equation yields
%
%
\begin{equation}
\label{chara}
{\mathsf{Q}}\bigl(S<\tau^X\bigr)=\mathbb{E}^{{\mathsf{P}}}
(\mathbh {1}_{\{S<\infty\}}X_S ).
\end{equation}
Applied to the stopping time $S_A:=S\mathbh{1}_A+\infty\mathbh
{1}_{A^c}$, where $A\in
\mathcal{F}_S$, this gives
\[
{\mathsf{Q}}\bigl(S<\tau^X,A\bigr)=\mathbb{E}^{{\mathsf{P}}} (
\mathbh {1}_{A\cap\{S<\infty\}}X_S ).
\]
Especially, if $S$ is finite ${\mathsf{P}}$-almost surely, then
${\mathsf{Q}}(S<\tau
^X,A)=\mathbb{E}^{{\mathsf{P}}}(X_S\mathbh{1}_A)$ for $A\in
{\mathcal{F}}_S$.
If $A\in{\mathcal{F}}_t\cap{\mathcal{F}}_{\tau^X-}$, then
\begin{eqnarray*}
{\mathsf{P}}(A)&=&\lim_{n\rightarrow\infty}{\mathsf{P}}\bigl(A\cap\bigl\{ t<
\tau^X_n\bigr\}\bigr)=\lim_{n\rightarrow
\infty}
\mathbb{E}^{{\mathsf{Q}}} \biggl(\mathbh{1}_A\mathbh{1}_{\{
t<\tau^X_n\}}
\frac{1}{X_{\tau
^X_n}} \biggr)
\\
&=& \lim_{n\rightarrow\infty}\mathbb{E}^{{\mathsf{Q}}} \biggl(\mathbh
{1}_A\mathbh{1}_{\{t<\tau^X_n\}}\frac
{1}{X_t} \biggr)=
\mathbb{E}^{{\mathsf{Q}}} \biggl(\mathbh{1}_A\mathbh
{1}_{\{t<\tau^X\}}\frac
{1}{X_t} \biggr).
\end{eqnarray*}
Therefore, $\frac{d{\mathsf{P}}}{d{\mathsf{Q}}}|_{\mathcal{F}_t\cap
\mathcal{F}_{\tau
^X-}}=\frac{1}{X_t}\mathbh{1}_{\{t<\tau^X\}}$ for all $t\geq0$.

Finally, note that because $(X_t^{\tau^X_n})_{t\geq0}$ is a strictly
positive uniformly integrable $\mathsf{P}$-martingale for all $n\in
\mathbb{N}$,
${\mathsf{P}}|_{\mathcal{F}_{\tau^X_n}}\sim{\mathsf{Q}}|_{\mathcal
{F}_{\tau^X_n}}$ and
\[
d{\mathsf{P}}|_{\mathcal{F}_{\tau^X_n}}=\frac{1}{X_{\tau
^X_n}}d{\mathsf{Q}}|_{\mathcal{F}_{\tau
^X_n}}
\quad \Leftrightarrow \quad \frac{d{\mathsf{Q}}}{d{\mathsf
{P}}}\bigg|_{{\mathcal{F}}_{t\wedge\tau
^X_n}}={X_{t\wedge\tau^X_n}}\qquad \forall  t
\geq0.
\]
Thus,
\[
\mathbb{E}^{{\mathsf{Q}}} \biggl(\frac{1}{X_{t\wedge\tau
^X_n}}\Big|\mathcal{F} _s
\biggr)= \mathbb{E}^{{\mathsf{P}}} \biggl(\frac{1}{X_{t\wedge\tau
^X_n}}\cdot
\frac{X_{t\wedge\tau^X_n}}{X_{s\wedge\tau^X_n}} \Big|\mathcal{F}_s \biggr)=\frac{1}{X_{s\wedge\tau^X_n}}
\]
for $s\leq t$, that is, $\frac{1}{X}$ is a local $\mathsf{Q}$-martingale on
the interval $\bigcup_{n\in\mathbb{N}}[0,\tau_n^X]=[0,\tau^X)$.
\end{pf}

%
\begin{cor}\label{cor}
Under the assumptions of Proposition~\ref{dom}, $X$ is a strict local
$\mathsf{P}$-martingale, if and only if \mbox{$\mathsf{Q}(\tau
^X<\infty)>0$.}
\end{cor}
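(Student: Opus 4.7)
The plan is to read off the law of $\tau^X$ under $\q$ from the change-of-measure formulas already established in the proof of Theorem \ref{komplett}, and then translate the martingale/strict-local-martingale dichotomy into an expectation condition.

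First I would specialize equation (\ref{chara}) from the proof of Theorem \ref{komplett} to the deterministic stopping time $S\equiv t$. This yields the clean identity
\[
	\q(t<\tau^X)=\E^{\p}\!\left(X_t\right)\quad\text{for every }t\geq 0,
\]
and hence, since $\q$ is a probability measure on $\F_{\tau^X-}$,
\[
	\q(\tau^X\leq t)=1-\E^{\p}\!\left(X_t\right).
\]
Because $X$ is a nonnegative $\p$-local martingale, hence a nonnegative $\p$-supermartingale with $X_0=1$, the map $t\mapsto \E^{\p}(X_t)$ is nonincreasing and takes values in $[0,1]$. In particular the limit $\ell:=\lim_{t\to\infty}\E^{\p}(X_t)\in[0,1]$ exists.

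Next I would let $t\to\infty$ and use monotone continuity of $\q$ to obtain
\[
	\q(\tau^X<\infty)=\lim_{t\to\infty}\q(\tau^X\leq t)=1-\ell.
\]
The equivalence is now immediate: $X$ is a true $\p$-martingale if and only if $\E^{\p}(X_t)=1$ for all $t\geq 0$, which (by monotonicity of $t\mapsto \E^{\p}(X_t)$) is equivalent to $\ell=1$, i.e.\ to $\q(\tau^X<\infty)=0$. Conversely, $X$ is a strict local $\p$-martingale precisely when $\E^{\p}(X_{t})<1$ for some $t$, which forces $\ell<1$ and therefore $\q(\tau^X<\infty)>0$.

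There is no genuine obstacle here: all the work has already been done inside the proof of Theorem \ref{komplett}, where identity (\ref{chara}) was derived for arbitrary $(\F_t)$-stopping times $S$. The only mild care needed is to note that $\q$ is defined on $\F_{\tau^X-}$ and that $\{\tau^X\leq t\}=\{\tau^X_n\leq t\text{ for all }n\text{ large}\}\in\F_{\tau^X-}$, so that the probabilities on the left-hand sides are legitimately assigned by $\q$, and to invoke the supermartingale monotonicity of $t\mapsto \E^{\p}(X_t)$ to convert "strict at some $t$" into "strict in the limit."
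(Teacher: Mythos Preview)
Your proof is correct and follows essentially the same route as the paper: both specialize identity (\ref{chara}) to the deterministic stopping time $S=t$ to obtain $\q(t<\tau^X)=\E^{\p}(X_t)$ and then convert the strict-local-martingale condition $\E^{\p}(X_t)<1$ for some $t$ into $\q(\tau^X<\infty)>0$. Your version is slightly more explicit in passing to the limit via supermartingale monotonicity, whereas the paper simply notes the equivalence directly, but there is no substantive difference.
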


\begin{pf}
It follows directly from equation (\ref{chara}) that $\mathsf
{Q}(t<\tau
^X)=\mathbb{E}^\mathsf{P}X_t$, which is smaller than 1 for some $t$,
iff $X$ is a
strict local martingale under $\mathsf{P}$.
\end{pf}

%
\begin{rem}
Corollary~\ref{cor} makes clear why we cannot work with the natural
augmentation of $(\tilde{\mathcal{F}}_t)_{t\geq0}$. Indeed, we have
$A_n:=\{\tau^X \leq n\}\in\mathcal{F}_n\cap\mathcal{F}_{\tau^X-}$ and
${\mathsf{P}}(A_n)=0$ for
all $n\in\mathbb{N}$, while ${\mathsf{Q}}(A_n)>0$ for some $n$ if
$X$ is a strict
local $\mathsf{P}$-martingale. However, it is in general rather inconvenient
to work without any augmentation, especially if one works with an
uncountable number of stochastic processes. For this reason, a new kind
of augmentation---called the $(\tau^X_n)$-natural augmentation---is
introduced in \cite{KN}, which is suitable for the change of measure
from $\mathsf{P}$ to $\mathsf{Q}$ undertaken here. Since for the financial
applications in the second part of this paper the setup introduced
above is already sufficient, we do not bother about this augmentation
here and refer the interested reader to \cite{KN} for more technical details.
\end{rem}

\phantomsection
\label{extension}

In the following, we extend the measure $\mathsf{Q}$ in an arbitrary
way from
$\mathcal{F}_{\tau^X-}$ to $\mathcal{F}_\infty=\bigvee_{t\geq
0}\tilde{\mathcal{F}}_t$. For
notational convenience, we assume that $\mathcal{F}=\mathcal
{F}_\infty$. In fact, it
is always possible to extend a probability measure from $\mathcal
{F}_{\tau
^X-}$ to $\mathcal{F}$: since $(\Omega,\tilde{\mathcal{F}}_t)$ is a
standard Borel
space for every $t\geq0$ and $(\Omega,\mathcal{F}_{\tau^X_n-})$ is a
standard Borel space for all $n\in\mathbb{N}$ by Lemma~\ref{standard}, it
follows from Theorem~4.1 in \cite{para} that $(\Omega,\mathcal{F})$ and
$(\Omega,\mathcal{F}_{\tau^X-})$ are also standard Borel spaces. Especially,
they are countably generated which allows us to apply Theorem~3.1 of
\cite{Ershov} that guarantees an extension of $\mathsf{Q}$ from
$\mathcal{F}_{\tau
^X-}$ to $\mathcal{F}$. Moreover, it does not matter for the results
how we
extend it, because all events that happen with positive probability
under $\mathsf{P}$ take place before time $\tau^X$ under $\mathsf
{Q}$ almost surely.
However, if $Y$ is any process on $(\Omega,\mathcal{F},(\mathcal
{F}_t)_{t\geq0},\mathsf{P})$,
then $Y_t$ is only defined on $\{t<\tau^X\}$ under $\mathsf{Q}$. Especially,
if $Y$ is a $\mathsf{P}$-semi-martingale, then $Y^{\tau^X_n}$ is a
$\mathsf{Q}
$-semi-martingale for each $n\in\mathbb{N}$ as follows from Girsanov's
theorem, since $\mathsf{Q}|_{\mathcal{F}_{\tau_n^X}} \sim\mathsf
{P}|_{\mathcal{F}_{\tau_n^X}}$.
Therefore, $Y$ is a $\mathsf{Q}$-semi-martingale on the stochastic interval
$\bigcup_{n\in\mathbb{N}} [0,\tau_n^X]$ or a ``semi-martingale up
to time
$\tau^X$'' in the terminology of \cite{Jacod}. We note that in
general it may not be possible to extend $Y$ to the whole positive real
line under $\mathsf{Q}$ in such a way that $Y$ remains a~semi-martingale.
Indeed, according to Proposition~5.8 of \cite{Jacod} such an extension
is possible if and only if $Y_{\tau^X-}$ exists in $\mathbb{R}_+$
$\mathsf{Q}$-almost surely. We define the process~$\tilde{Y}$ as
%
%
\begin{equation}
\label{tilde}
\tilde{Y}_t=
\cases{\ds Y_t, &\quad  $t<
\tau^X$,
\vspace*{3pt}\cr
\ds\liminf_{s\rightarrow\tau^X,s<\tau^X,s\in\mathbb{Q}}Y_s, &\quad  $\tau
^X\leq t<\infty$.}
\end{equation}
Note that $\tilde{Y}_t=Y_t$ on $\{t<\tau^X\}$. The above\vspace*{1pt} definition
specifies an extension of the process $Y$, which is a priori only
defined up to time $\tau^X$, to the whole positive real line. In the
following, we will work with this extension.

%
\begin{lem}
Under the assumptions of Proposition~\ref{dom}, we have $\frac
{1}{\tilde{X}_t}=\frac{1}{X_t}\mathbh{1}_{\{t<\tau^X\}}$.
Furthermore, the
process $ (\frac{1}{\tilde{X}_t} )_{t\geq0}$ is a true
$\mathsf{Q}$-martingale for any extension of $\mathsf{Q}$ from
$\mathcal{F}_{\tau^X-}$ to
$\mathcal{F}$.
\end{lem}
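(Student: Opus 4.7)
My plan is to prove the pointwise identity first and then deduce the martingale property from the Radon--Nikodym identity of Theorem \ref{komplett} together with monotone approximation.

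For the pointwise identity, on $\{t<\tau^X\}$ the definition of $\tilde X$ directly gives $1/\tilde X_t=1/X_t$, and nothing needs to be shown. On $\{t\geq\tau^X\}$ I need $\liminf_{s\to\tau^X,\,s<\tau^X,\,s\in\Q}X_s=\infty$ $\q$-a.s., which will force $1/\tilde X_t=0$. From Theorem \ref{komplett} I already have $\q(\tau^X_n<\tau^X)=1$ for every $n\in\N$ together with the fact that $(1/X_{t\wedge\tau^X_n})_{t\geq0}$ is a nonnegative $\q$-martingale. By right-continuity of $X$ and the definition of $\tau^X_n$ one has $X_{\tau^X_n}\geq n$ on $\{\tau^X_n<n\}$, and since $\tau^X_n\uparrow\tau^X$ this condition holds eventually on $\{\tau^X<\infty\}$; in particular $1/X_{\tau^X_n}\to 0$ $\q$-a.s. there. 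To upgrade convergence along $(\tau^X_n)$ to convergence along the full path, I apply Doob's maximal inequality conditionally on $\F_{\tau^X_n}$ to the nonnegative $\q$-martingale $(1/X_{(\tau^X_n+t)\wedge\tau^X_m})_{t\geq0}$ and let $m\to\infty$, obtaining $\sup_{\tau^X_n\leq s<\tau^X}1/X_s\to 0$ $\q$-a.s. on $\{\tau^X<\infty\}$. Hence $X_s\to\infty$ as $s\uparrow\tau^X$, and the $\liminf$ in question is indeed $\infty$.

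For the martingale property, I fix $s\leq t$ and $A\in\F_s$; the identity $\E^{\q}[\indic_A/\tilde X_t]=\E^{\q}[\indic_A/\tilde X_s]$ (which, ranging over $A\in\F_s$, is the conditional expectation statement $\E^{\q}[1/\tilde X_t\mid\F_s]=1/\tilde X_s$) will follow once I show
\[
\E^{\q}\!\left[\indic_A\,\indic_{\{t<\tau^X\}}\,\frac{1}{X_t}\right]=\p(A)
\qquad\text{for all } t\geq0,\ A\in\F_s\subset\F_t.
\]
To prove this, I approximate by $\tau^X_n$: for each $n$, the integrand $\indic_A\indic_{\{t<\tau^X_n\}}/X_{t\wedge\tau^X_n}$ is $\F_{t\wedge\tau^X_n}$-measurable, and using the identity $d\q/d\p|_{\F_{t\wedge\tau^X_n}}=X_{t\wedge\tau^X_n}$ established at the very end of the proof of Theorem \ref{komplett}, the $\q$-expectation converts to $\p(A\cap\{t<\tau^X_n\})$. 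Letting $n\to\infty$, monotone convergence on the $\q$-side and $\p(\tau^X=\infty)=1$ on the $\p$-side give the displayed identity. Setting $A=\Omega$ and $s=0$ in particular yields $\E^{\q}[1/\tilde X_t]=1$, so the martingale is normalised. Independence from the chosen extension of $\q$ beyond $\F_{\tau^X-}$ is automatic because the integrand $\indic_{\{t<\tau^X\}}/X_t$ is supported on and measurable with respect to $\F_{\tau^X-}$.

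The main obstacle I anticipate is the upgrade from $1/X_{\tau^X_n}\to 0$ (convergence along the sequence of stopping times) to $\lim_{s\uparrow\tau^X}1/X_s=0$ (convergence along the full path); this is the only step requiring more than routine Radon--Nikodym bookkeeping, and it is where the nonnegative-martingale/Doob-inequality argument must be deployed carefully on the event $\{\tau^X_n<n\}$ where the starting value $1/X_{\tau^X_n}$ is controlled by $1/n$. Everything else reduces to applying Theorem \ref{komplett} together with monotone convergence.
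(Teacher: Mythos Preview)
Your proof is correct, but it diverges from the paper's in both halves.

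For the pointwise identity, the paper simply computes $\lim_n 1/X_{t\wedge\tau^X_n}=\frac{1}{X_t}\indic_{\{t<\tau^X\}}$ via a limsup/liminf calculation along the sequence $(\tau^X_n)$, and then separately writes $1/X_{\tau^X-}=\lim_n 1/X_{\tau^X_n}=0$ on $\{\tau^X<\infty\}$; the passage from sequence convergence to the full left limit is left implicit. You correctly flag this as the delicate point and supply a Doob maximal--inequality argument to show $\sup_{\tau^X_n\le s<\tau^X}1/X_s\to 0$. That works, though a shorter alternative is to observe that $1/X$ is a nonnegative local $\q$-martingale on $[0,\tau^X)$, hence a supermartingale, so $\lim_{s\uparrow\tau^X}1/X_s$ exists in $[0,\infty)$ $\q$-a.s.\ by supermartingale convergence, and the limit is identified as $0$ along $(\tau^X_n)$. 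Either way, your treatment is more careful on this point than the paper's.

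For the martingale property the approaches are genuinely different. The paper finds the dominating function $1\vee 1/\tilde X_t$ for $1/X_{t\wedge\tau^X_n}$ (valid for $n\ge t$), checks its integrability by Fatou, and passes to the limit in the conditional expectation by dominated convergence. You instead read the martingale property directly off the Radon--Nikodym relation of Theorem \ref{komplett}: since $\E^\q[\indic_A\,\indic_{\{t<\tau^X\}}/X_t]=\p(A)$ for every $A\in\F_t$, and the right-hand side does not depend on $t$, the conditional expectation identity is immediate. This is cleaner and makes the dependence on Theorem \ref{komplett} explicit; the paper's route, by contrast, yields the pointwise domination $1/X_{t\wedge\tau^X_n}\le 1\vee 1/\tilde X_t$ as a by-product. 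Note also that your approximation step is in fact a re-derivation of the Radon--Nikodym identity already recorded in Theorem \ref{komplett}; you could quote it directly (applied to $B=A\cap\{t<\tau^X\}\in\F_t\cap\F_{\tau^X-}$) and skip the monotone-convergence argument.
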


\begin{pf}
First, note that $\mathsf{Q}$-almost surely
\begin{eqnarray*}
\limsup_{n\rightarrow\infty} \frac{1}{X_{t\wedge\tau
^X_n}}&=&\limsup
_{n\rightarrow\infty} \biggl(\frac{1}{X_t}\mathbh{1}_{\{t<\tau
^X_n\}}+
\frac
{1}{X_{\tau^X_n}}\mathbh{1}_{\{t\geq\tau^X_n\}} \biggr)
\\
&\leq& \frac{1}{X_t}\mathbh{1}_{\{t<\tau^X\}}+\limsup_{n\rightarrow\infty
}
\frac{1}{n}\mathbh{1} _{\{t\geq\tau^X_n\}}=\frac{1}{X_t}
\mathbh{1}_{\{t<\tau^X\}}
\end{eqnarray*}
and
\begin{eqnarray*}
\liminf_{n\rightarrow\infty} \frac{1}{X_{t\wedge\tau^X_n}} &=&  \liminf
_{n\rightarrow\infty} \biggl(\frac{1}{X_t}\mathbh{1}_{\{t<\tau
^X_n\}}+
\frac
{1}{X_{\tau^X_n}}\mathbh{1}_{\{t\geq\tau^X_n\}} \biggr)\\
& \geq &  \liminf
_{n\rightarrow
\infty} \frac{1}{X_t}\mathbh{1}_{\{t<\tau^X_n\}}=
\frac
{1}{X_t}\mathbh{1}_{\{t<\tau
^X\}}.
\end{eqnarray*}
Thus, $\frac{1}{\tilde{X}_t}=\frac{1}{X_t}\mathbh{1}_{\{t<\tau^X\}
}$. Furthermore,
\begin{eqnarray*}
0&\leq&\frac{1}{X_{\tau^X-}}\mathbh{1}_{\{\tau^X<\infty\}}=\lim_{k\rightarrow
\infty}
\frac{1}{X_{\tau^X-}}\mathbh{1}_{\{\tau^X<k\}}= \lim_{k\rightarrow\infty}\lim
_{n\rightarrow\infty}\frac
{1}{X_{\tau^X_n}}\mathbh{1}_{\{
\tau^X<k\}}
\\
&\leq&\lim_{k\rightarrow\infty}\lim_{n\rightarrow\infty}
\frac
{1}{n}\mathbh{1}_{\{\tau
^X<k\}}=0
\end{eqnarray*}
implies that $X_{\tau^X-}=\infty$ on $\{\tau^X<\infty\}$ $\mathsf{Q}
$-almost surely. From the proof of Proposition~\ref{dom}, we know that
$\frac{1}{X^{\tau_n^X}}$ is a true $\mathsf{Q}$-martingale for all
$n\in\mathbb{N}
$. By the definition of $\tau_n^X$, we have for any integer $n\geq t$
\[
X_{t\wedge\tau^X_n}=\tilde{X}_{t\wedge\tau^X_n}=\tilde {X}_{t\wedge\inf\{s\geq0\dvtx \tilde{X}_s>n\}}\geq
\tilde{X}_t\wedge 1 \quad \Rightarrow \quad  \frac{1}{X_{t\wedge\tau^X_n}}\leq
\frac{1}{\tilde{X}_t\wedge
1}=1\vee\frac{1}{\tilde{X}_t}.
\]
Because
\[
\mathbb{E}^{\mathsf{Q}} \biggl(\frac{1}{\tilde{X}_t} \biggr)=\mathbb
{E}^{\mathsf{Q}} \biggl(\liminf_{n\rightarrow\infty}\frac{1}{{X}_{t\wedge\tau_n^X}}
\biggr)\leq \liminf_{n\rightarrow\infty}\mathbb{E}^{\mathsf{Q}} \biggl(
\frac
{1}{{X}_{t\wedge\tau_n^X}} \biggr)=1,
\]
the dominated convergence theorem implies that for all $0\leq s\leq t$
\begin{eqnarray*}
\mathbb{E}^{{\mathsf{Q}}} \biggl(\frac{1}{\tilde{X}_t}\Big|{\mathcal
{F}}_s \biggr)& =& \mathbb{E}^{{\mathsf{Q}}} \biggl(\lim
_{n\rightarrow
\infty}\frac{1}{{X}_{t\wedge
\tau^X_n}}\Big|{\mathcal{F}}_s \biggr)
=
\lim_{n\rightarrow\infty
}\mathbb{E}^{{\mathsf{Q}
}} \biggl(\frac{1}{X_{t\wedge\tau^X_n}}\Big|{
\mathcal{F}}_s \biggr) \\
&=& \lim_{n\rightarrow\infty}
\frac{1}{X_{s\wedge\tau^X_n}}=\frac
{1}{\tilde{X}_s}.
\end{eqnarray*}
\upqed\end{pf}

To simplify notation, we identity in the following the process $X$ with
$\tilde{X}$. We summarize our results so far in the following theorem.

%
\begin{thm}\label{komplett}
Let $ (\Omega,\mathcal{F},(\tilde{\mathcal{F}}_t)_{t\geq
0},\mathsf{P} )$ be a
filtered probability space and assume that $(\tilde{\mathcal
{F}}_t)_{t\geq0}$
is a standard system.
Let $X$ be a c\`adl\`ag local martingale on $(\Omega,\mathcal
{F},({\mathcal{F}
}_{t})_{t\geq0},\mathsf{P})$ with values in $(0,\infty)$ and $X_0=1$
$\mathsf{P}$-almost surely.
We define $\tau^X_n:=\inf\{t\geq0\dvtx X_t>n\}\wedge n$  and
$\tau^X=\lim_{n\rightarrow\infty}\tau^X_n$.
Then there exists a probability measure ${\mathsf{Q}}$ on $(\Omega,\mathcal{F}
_\infty )$ such that $1/X$ is a $\mathsf{Q}$-martingale, which
does not
jump to zero $\mathsf{Q}$-almost surely, and such that
$\mathsf{Q}(A,\tau^X>t)=\mathbb{E}^\mathsf{P}(X_t\mathbh{1}_{A})$
for all $t\geq0$ and $A\in\mathcal{F}
_t$. In particular, $\mathsf{P}|_{\mathcal{F}_t}\ll\mathsf
{Q}|_{\mathcal{F}_t}$ for all $t\geq0$.
\end{thm}

Note that in the case where $X$ is a strict local $\mathsf{P}$-martingale
Theorem~\ref{komplett} is a~precise converse to Theorem~\ref{thm1}, if one
identifies $X$ of Theorem~\ref{komplett} with $1/Y$ of Theorem~\ref{thm1}.

\section{Examples}\label{examples}

In this section, we shed new light on some known examples of strict
local martingales by applying the theory from the last section for illustration.

\subsection{Continuous local martingales}

For the following examples, we work on the path space $C'(\mathbb
{R}_+,\overline{\mathbb{R}
}_+)$ with $W$ denoting the coordinate process. Here, $(\mathcal
{F}_t)_{t\geq
0}$ is the right-continuous augmentation of the canonical filtration
generated by the coordinate process and $\mathsf{P}$ is Wiener measure.

\subsubsection{Exponential local martingales}

Suppose that $X$ has dynamics
\[
dX_t=X_t b(Y_t)\,dW_t,\qquad
X_0=1,
\]
where $Y$ is assumed to be a (possibly explosive) diffusion with
\[
dY_t=\mu(Y_t)\,dt+\sigma(Y_t)\,dW_t,\qquad
Y_0=y\in\mathbb{R}.
\]
Here, $b(\cdot),\mu(\cdot)$ and $\sigma(\cdot)$ are chosen such
that both SDEs allow for strong solutions and guarantee $X$ to be
strictly positive. Exponential local martingales of this type are
further studied in \cite{Urusov}. Under $\mathsf{Q}$ the dynamics of
$\frac
{1}{X}$ up to time $\tau^X$ are
\[
d \biggl(\frac{1}{X_t} \biggr)=-\frac{b(Y_t)}{X_t}\,dW_t^\mathsf{Q}
\]
for a $\mathsf{Q}$-Brownian motion $W^\mathsf{Q}$ defined up to time
$\tau^X$, and
the $\mathsf{Q}$-dynamics of $Y_t$ up to time $\tau^X$ are
\[
dY_t= \bigl[\mu(Y_t)+\sigma(Y_t)b(Y_t)
\bigr]\,dt+\sigma (Y_t)\,dW_t^\mathsf{Q}.
\]
Notably, the criterion whether $X$ is a strict local or a true $\mathsf{P}$-martingale from \cite{Urusov}, Theorem~2.1, is deterministic and
only involves the functions $b,\sigma$ and $\mu$ via the scale
function of the original diffusion $Y$ under $\mathsf{P}$ and an auxiliary
diffusion $\tilde{Y}$, whose dynamics are identical with the $\mathsf{Q}$-dynamics of $Y$ stated above.

\subsubsection{Diffusions in natural scale}\label{difex}

We now take $X$ to be a local $\mathsf{P}$-martingale of the form
\[
dX_t=\sigma(X_t)\,dW_t,\qquad  X_0=1,
\]
assuming that $\sigma(x)$ is locally bounded and bounded away from
zero for $x>0$ and $\sigma(0)=0$. Using the results from \cite{DelbaenShirakawa}, we know that $X$ is strictly positive, whenever
\[
\int_0^1\frac{x}{\sigma^2(x)}\,dx=\infty,
\]
which we shall assume in the following. Furthermore, $X$ is a strict
local martingale, if and only if
\[
\int_1^\infty\frac{x}{\sigma^2(x)}\,dx<\infty.
\]
We know that $\frac{1}{X}$ is a $\mathsf{Q}$-martingale, where $\frac
{d\mathsf{P}}{d\mathsf{Q}}|_{\mathcal{F}_t}=\frac{1}{X_t}$, with
decomposition
\[
d \biggl(\frac{1}{X_t} \biggr)=-\frac{\sigma(X_t)}{X_t^2}\,dW_t^\mathsf
{Q}=\overline {\sigma} \biggl(\frac{1}{X_t} \biggr)\,dW_t^\mathsf{Q}
\]
for a $\mathsf{Q}$-Brownian motion $W^{\mathsf{Q}}$ defined up to
time $\tau^X$ and
$\overline{\sigma}(y):=-y^2\cdot\sigma (\frac{1}{y} )$.
Note that
\[
\int_1^\infty\frac{y}{\overline{\sigma}^2(y)}\,dy=\int
_0^1\frac
{x}{\sigma^2(x)}\,dx=\infty,
\]
which confirms that $\frac{1}{X}$ is a true $\mathsf{Q}$-martingale.
We see
that, if $X$ is a strict local martingale under $\mathsf{P}$, then
\[
\int_0^1\frac{y}{\overline{\sigma}^2(y)}\,dy=\int
_1^\infty\frac
{x}{\sigma^2(x)}\,dx<\infty,
\]
that is, $\frac{1}{X}$ hits zero in finite time $\mathsf{Q}$-almost surely.

\subsection{Jump example}\footnote{This example is taken from \cite{Chybi}.
However, we corrected a small mistake concerning the time-scaling.}
Let $\Omega=D'(\mathbb{R}_+,\overline{\mathbb{R}})$ with $(\xi
_t)_{t\geq0}$ denoting
the coordinate process and $(\mathcal{F}_t)_{t\geq0}$ being the
right-continuous augmentation of the canonical filtration generated by
the coordinate process. Assume that under $\mathsf{P}$, $(\xi
_t)_{t\geq0}$
is a one-dimensional L\'evy process with $\xi_0=0$, $\mathbb
{E}^\mathsf{P}\exp(b\xi
_t)=\exp(t\rho(b))<\infty$ for all $t\geq0$ and characteristic exponent
\[
\Psi(\lambda)=ia\lambda+\frac{1}{2}\sigma^2
\lambda^2+\int_\mathbb{R} \bigl(1-e^{i\lambda x}+i
\lambda x\mathbh{1}_{\{|x|<1\}} \bigr)\pi(dx),
\]
where $a\in\mathbb{R}$, $\sigma^2\geq0$ and $\pi$ is a positive
measure on
$\mathbb{R}\setminus\{0\}$ such that $\int(1\wedge|x|^2)\pi
(dx)<\infty$.
Define
\[
X_t=Y_t^b\exp \biggl(-\rho(b)\int
_0^t\frac{ds}{Y_s} \biggr),
\]
where $(Y_t)_{t\geq0}$ is a semi-stable Markov process, that
is, $ (\frac{1}{c}Y_{ct}^{(x)} )_{t\geq0}\stackrel
{(d)}{=} (Y_t^{(xc^{-1})} )_{t\geq0}$ for all $c>0$,
implicitly defined via
\[
\exp(\xi_t)=Y_{\int_0^t\exp(\xi_s)\,ds}.
\]
Following \cite{Chybi}, $(X_t)_{t\geq0}$ is a positive strict local
martingale if $a$ and $b$ satisfy
\begin{eqnarray*}
-a+\int_{|x|>1} x\pi(dx) &\geq & 0, \\
-a+b\sigma^2-\int
_{|x|<1}x\bigl(1-e^{bx}\bigr)\pi(dx)+\int
_{|x|>1}xe^{bx}\pi(dx) &<& 0.
\end{eqnarray*}
Furthermore, under the new measure $\mathsf{Q}$ the process
\[
\frac{1}{X_t}=Y_t^{-b}\exp \biggl(\rho(b)\int
_0^t\frac
{ds}{Y_s} \biggr)
\]
is a true martingale, where now $(\xi_t)_{t\geq0}$ has characteristic
exponent $\tilde{\Psi}$ with
\[
\tilde{\Psi}(u)=\Psi(u-ib)-\Psi(-ib).
\]

\section{Application to financial bubbles I: Decomposition
formulas}\label{app1}

In this section, we apply our results to option pricing in the presence
of strict local martingales. For this, we assume that the following
standing assumption (S) holds throughout the entire
section:

\begin{longlist}[(S)]
\item[(S)] \textit{$X$ is assumed to be a c\`adl\`ag strictly
positive local martingale on $(\Omega,\mathcal{F}, (\mathcal
{F}_t)_{t\geq0},\mathsf{P})$,
whose filtration is the right-continuous augmentation of a standard
system and $\mathcal{F}=\bigvee_{t\geq0}\mathcal{F}_t$. We assume
that $X_0=1$ and
set $\tau_n^X=\inf\{t\geq0| X_t>n\}\wedge n$ for all $n\in\mathbb
{N}$ and
$\tau^X=\lim_{n\rightarrow\infty}\tau_n^X$. Furthermore, we denote
by $\mathsf{Q}$ any extension to $(\Omega,\mathcal{F})$ of the measure associated
with $X$,
defined in Theorem~\ref{komplett}.}
\end{longlist}

We consider a financial market model which satisfies the NFLVR property
as defined in \cite{NFLVRlocal}. We denote by $\mathsf{P}$ an equivalent
local martingale measure (ELMM). Assuming that the interest rate equals
zero, we interpret $X$ as the (discounted) stock price process, which
is a local martingale under $\mathsf{P}$. In this context, the
question of
whether $X$ is a strict local or a true $\mathsf{P}$-martingale determines
whether there exists a stock price bubble. If $X$ is a strict local
$\mathsf{P}$-martingale, the fundamental value of the asset (given by the
conditional expectation) deviates from its actual market price $X$.
Several authors (cf., e.g., \cite{CoxHobson,JPScomp,JPSincomp,PalP})
have interpreted this as the existence of a stock price bubble, which
we formally define as follows.

%
\begin{df}
With the previous notation, the \textit{asset price bubble} for the
stock price process $X$ between time $t\geq0$ and time $T\geq t$ is
equal to the $\mathcal{F}_t$-measurable random\vspace*{-3pt} variable
\[
\gamma_X(t,T):=X_t-\mathbb{E}^\mathsf{P}(X_T|
\mathcal{F}_t).
\]
\end{df}

%
\begin{rem}
For $t=0$, we recover the `default' function $\gamma
_X(0,T)=X_0-\mathbb{E}^\mathsf{P}
X_T$ of the local martingale $X$, which was introduced in \cite{ELYradial}. Here, the term `default' refers to the locality property
of $X$ and measures its failure of being a~martingale. In \cite{ELYtails,ELYradial}, the authors derive several expressions for the
default function in terms of the first hitting time, the local time and
the last passage time of the local\vspace*{-2pt} martingale.
\end{rem}

%
\begin{rem}
Note that the above definition of a bubble depends on the measure
$\mathsf{P}$, which may be viewed as the subjective valuation measure of
a certain economic agent. From the agent's point of view, the asset
price contains a bubble. Only in a complete market, that is, if and
only if $\mathsf{P}$ is the unique ELMM, the notion of a bubble becomes
universal without any element of\vspace*{-2pt} subjectivity.
\end{rem}

In Proposition~7 of \cite{PalP}, the price of a nonpath-dependent
option written on a stock, whose price process is a (strict) local
martingale, is decomposed into a ``normal'' (``nonbubble'') term and a
default term. In the following, we give an extension of this theorem to
a certain class of path-dependent options. For this, let us introduce
the following notation for all\vspace*{-2pt} $k\in\mathbb{N}$:
\[
\mathbb{R}^k_+=\bigl\{x\in\mathbb{R}^k\dvtx
x_l\geq0, l=1,\ldots,k\bigr\}, \qquad \mathbb{R}^k_{++}=
\bigl\{x\in\mathbb{R} ^k\dvtx x_l> 0, l=1,\ldots,k\bigr\}.
\]

%
\begin{thm}\label{inbetween}
Let $0\leq t_1<t_2<\cdots<t_n<\infty$ and consider a Borel-measurable
nonnegative function $h\dvtx\mathbb{R}_{++}^n\rightarrow\mathbb{R}_+$.
Define the function
$g(x):=x_n\cdot h (\frac{1}{x_1},\ldots,\frac{1}{x_n} )$
for all $x= (x_1,\ldots,x_n )\in\mathbb{R}_{++}^n$.\vspace*{-2pt} Then
\[
\mathbb{E}^\mathsf{P}h (X_{t_1},\ldots,X_{t_n} )=
\mathbb {E}^\mathsf{Q} \biggl(g \biggl(\frac{1}{X_{t_1}},\ldots,
\frac{1}{X_{t_n}} \biggr)\mathbh{1}_{\{
\tau^X>t_n\}
} \biggr).
\]
Now\vspace*{-2pt} suppose that the following limits exist in $\mathbb{R}_+$ for
$y_i\in\mathbb{R}
_{++}, i=1,\ldots,\break n-1$:
\begin{eqnarray*}
\lim_{|z|\rightarrow0} g (y_1,\ldots,y_k;z_1,
\ldots,z_{n-k} )&=:&\eta_k(y_1,
\ldots,y_k),\qquad  k=1,\ldots,n-1,
\\[-4pt]
\lim_{|z|\rightarrow0} g (z_1,\ldots,z_{n} )&=:&
\eta_0.
\end{eqnarray*}
Define $\overline{g}\dvtx A\rightarrow\mathbb{R}_+$ as the extension of
$g$ from $\mathbb{R}
^n_{++}$ to $A\subset\mathbb{R}^n_+$, where $A$ is defined as $A:=\{
x\in\mathbb{R}
^n_+\dvtx \mbox{if }x_k=0\mbox{ for some }k=1,\ldots,n,
\mbox{then }x_l=0 \  \forall  l\geq k\}$.\vspace*{-3pt} Then
%
\begin{eqnarray}\label{d1}
&& \mathbb{E}^\mathsf{P}h (X_{t_1},
\ldots,X_{t_n} )
\nonumber\\[-8pt]\\[-8pt]\nonumber
&&\qquad =\mathbb {E}^\mathsf{Q}\overline {g} \biggl(
\frac{1}{X_{t_1}},\ldots,\frac{1}{X_{t_n}} \biggr)-\sum
_{k=0}^{n-1} \mathbb{E}^\mathsf{Q} \bigl(
\mathbh{1}_{\{t_k<\tau
^X\leq t_{k+1}\}}\cdot\eta _k\bigl(X^k\bigr)
\bigr),
\end{eqnarray}
where we set $t_0=0$ and $X^k= (\frac{1}{X_{t_1}},\ldots,\frac
{1}{X_{t_k}} )$ for $k=1,\ldots,n-1$, $X^0\equiv0$.

In particular, if $\eta_k(\cdot)\equiv c_k, k=1,\ldots,n-1$, are
constant, then
%
\begin{equation}
\label{d2}
\hspace*{7pt}\mathbb{E}^\mathsf{P}h (X_{t_1},
\ldots,X_{t_n} )= \mathbb {E}^\mathsf{Q}\overline {g} \biggl(
\frac{1}{X_{t_1}},\ldots,\frac{1}{X_{t_n}} \biggr)-\sum
_{k=0}^{n-1} c_k\cdot\mathsf{Q}
\bigl(t_k<\tau^X\leq t_{k+1} \bigr).
\end{equation}
\end{thm}

\begin{pf}
First, note that
\[
\mathbh{1}_{\{\tau^X>t_n\}}=\mathbh{1}_{\{\tau^X>t_1\}}\mathbh {1}_{\{\tau^X>t_2\}}
\cdots\mathbh{1} _{\{\tau^X>t_{n-1}\}}\mathbh{1}_{\{\tau^X>t_n\}}.
\]
Using the change of measure $d\mathsf{P}|_{\mathcal{F}_{t_n}}=\frac
{1}{X_{t_n}}\,d\mathsf{Q}
|_{\mathcal{F}_{t_n}}$ on $\{\tau^X>t_n\}$, we deduce
\begin{eqnarray*}
&& \mathbb{E}^\mathsf{P}h(X)\\
&&\qquad= \mathbb{E}^\mathsf{Q} \biggl(g \biggl(
\frac
{1}{X} \biggr)\mathbh{1}_{\{\tau
^X>t_n\}} \biggr)\\
&&\qquad=
\mathbb{E}^\mathsf{Q} \biggl(g \biggl(\frac
{1}{X} \biggr)
\mathbh{1}_{\{
\tau^X>t_1\}}\cdots\mathbh{1}_{\{\tau^X>t_n\}} \biggr)\\
&&\qquad=
\mathbb{E}^\mathsf{Q} \biggl(\mathbh{1}_{\{\tau^X>t_1\}
}
\mathbb{E}^\mathsf{Q} \biggl(\mathbh{1}_{\{\tau^X>t_2\}}\cdots\\
&&\qquad\quad {}\times\mathbb{E}^\mathsf{Q} \biggl(\mathbh{1}_{\{\tau
^X>t_{n-1}\}}
\mathbb{E}^\mathsf{Q} \biggl(\mathbh{1}_{\{\tau^X>t_n\}
}g \biggl(
\frac
{1}{X} \biggr)\Big|\mathcal{F}_{t_{n-1}} \biggr)\Big|\mathcal
{F}_{t_{n-2}} \biggr)\cdots\Big|\mathcal{F}_{t_2} \biggr)\Big|\mathcal
{F}_{t_1} \biggr)\bigg).
\end{eqnarray*}
Because on $\{\tau^X>t_{n-1}\}$, we have
\begin{eqnarray*}
&& \mathbb{E}^\mathsf{Q} \biggl(\mathbh{1}_{\{\tau^X>t_n\}}g \biggl(
\frac{1}{X} \biggr)\Big|\mathcal{F}_{t_{n-1}} \biggr)\\
&&\qquad=
\mathbb{E}^\mathsf{Q} \biggl(\overline{g} \biggl(\frac{1}{X}
\biggr)\Big|\mathcal{F}_{t_{n-1}} \biggr)-\mathbb{E}^\mathsf{Q} \bigl(
\mathbh{1}_{\{t_{n-1}<\tau^X\leq t_n\}} \eta_{n-1} \bigl(X^{n-1} \bigr)|
\mathcal{F}_{t_{n-1}} \bigr),
\end{eqnarray*}
it follows that
\begin{eqnarray*}
\mathbb{E}^\mathsf{P}h(X)&=& \mathbb{E}^\mathsf{Q}\biggl(
\mathbh{1}_{\{
\tau^X>t_1\}}\mathbb{E}^\mathsf{Q} \biggl(\mathbh{1}
_{\{\tau^X>t_2\}}\cdots\\
&& {}\times\mathbb{E}^\mathsf{Q} \biggl(\mathbh{1}_{\{
\tau^X>t_{n-2}\}}
\mathbb{E}^\mathsf{Q} \biggl(\mathbh{1}_{\{\tau^X>t_{n-1}\}}\overline{g} \biggl(
\frac
{1}{X} \biggr)\Big|\mathcal{F}_{t_{n-2}} \biggr)\cdots\Big|\mathcal
{F}_{t_1} \biggr) \biggr)
\\
&&{}- \mathbb{E}^\mathsf{Q} \bigl(\mathbh{1}_{\{\tau^X>t_1\}
}
\mathbh{1}_{\{\tau^X>t_2\}}\cdots\mathbh{1}_{\{
\tau^X>t_{n-1}\}}\mathbh{1}_{\{t_{n-1}<\tau^X\leq t_n\}}
\eta _{n-1} \bigl(X^{n-1} \bigr) \bigr).
\end{eqnarray*}
Similarly, on $\{\tau^X>t_{n-2}\}$ we have
\begin{eqnarray*}
&& \mathbb{E}^\mathsf{Q} \biggl(\mathbh{1}_{\{\tau^X>t_{n-1}\}
}\overline{g}
\biggl(\frac
{1}{X} \biggr)\Big|\mathcal{F}_{t_{n-2}} \biggr)\\
&& \qquad =
\mathbb{E}^\mathsf {Q} \biggl(\overline {g} \biggl(\frac{1}{X}
\biggr)\Big|\mathcal{F}_{t_{n-2}} \biggr)-\mathbb {E}^\mathsf{Q} \bigl(
\mathbh{1}_{\{t_{n-2}<\tau^X\leq t_{n-1}\}}\eta_{n-2} \bigl(X^{n-2} \bigr)|
\mathcal{F}_{t_{n-2}} \bigr),
\end{eqnarray*}
and we deduce that
\begin{eqnarray*}
\mathbb{E}^\mathsf{P}h(X)&=&\mathbb{E}^\mathsf{Q}\biggl(
\mathbh{1}_{\{
\tau^X>t_1\}}\mathbb{E}^\mathsf{Q}\biggl(\mathbh{1}_{\{\tau^X>t_2\}
}
\cdots\mathbb{E}^\mathsf{Q}\biggl(\mathbh{1}_{\{\tau^X>t_{n-2}\}
}\overline{g}
\biggl(\frac{1}{X}\biggr)\Big|\mathcal{F}_{t_{n-3}}\biggr)\cdots \Big|
\mathcal{F}_{t_1}\biggr)\biggr)
\\
&& {}- \mathbb{E}^\mathsf{Q}\bigl(\mathbh{1}_{\{t_{n-2}<\tau^X\leq
t_{n-1}\}}\eta
_{n-2}\bigl(X^{n-2}\bigr)\bigr) - \mathbb{E}^\mathsf{Q}
\bigl(\mathbh{1}_{\{
t_{n-1}<\tau^X\leq t_n\}}\eta_{n-1}\bigl(X^{n-1}\bigr)
\bigr).
\end{eqnarray*}
Iterating this procedure results in
\begin{eqnarray*}
\mathbb{E}^\mathsf{P}h(X)&=&\mathbb{E}^\mathsf{Q} \biggl(\mathbh
{1}_{\{\tau^X>t_1\}}\overline{g} \biggl(\frac
{1}{X} \biggr) \biggr)-\sum
_{k=1}^{n-1} \mathbb{E}^\mathsf{Q}
\bigl(\mathbh{1}_{\{
t_{k}<\tau^X\leq t_{k+1}\}}\eta_{k} \bigl(X^{k} \bigr)
\bigr)
\\
&=&\mathbb{E}^\mathsf{Q}\overline{g} \biggl(\frac{1}{X} \biggr)-
\mathbb{E}^\mathsf{Q} (\mathbh{1}_{\{\tau
^X\leq t_1\}}\eta_0 )-\sum
_{k=1}^{n-1} \mathbb{E}^\mathsf {Q}
\bigl(\mathbh{1}_{\{
t_{k}<\tau^X\leq t_{k+1}\}}\eta_{k} \bigl(X^{k} \bigr)
\bigr).
\end{eqnarray*}
\upqed\end{pf}

%
\begin{rem}
The sum following the minus sign in the above decompositions (\ref
{d1}) and (\ref{d2}) will be called the \textit{default term}. This
is motivated by the following observation:
%
\begin{eqnarray}
\gamma_X(t,T)&=& X_{t}-\mathbb{E}^\mathsf{P}(X_{T}|
\mathcal {F}_{t})=X_t-X_t\cdot\mathsf{Q}\bigl(
\tau ^X>T|\mathcal{F}_t\bigr)
\nonumber
\\[-8pt]
\label{bub}\\[-8pt]
\nonumber
&=& X_t\cdot
\mathsf{Q}\bigl(\tau^X\leq T| \mathcal {F}_t\bigr)\qquad
\mathsf{P}\mbox{-a.s.}
\end{eqnarray}
Here, the second equality in (\ref{bub}) is justified by the following
calculation, valid for any $\mathcal{F}_t$-measurable set $A$:
\begin{eqnarray*}
\mathbb{E}^\mathsf{P}(\mathbh{1}_AX_T)&=&
\mathsf{Q}\bigl(A,\tau ^X>T\bigr)=\mathsf{Q}\bigl(A,
\tau^X>t,\tau^X>T\bigr)\\
&=&\mathbb{E}^\mathsf{Q}
\bigl(\mathbh{1}_{\{A,\tau^X>t\}}\mathsf{Q}\bigl(\tau^X>T|\mathcal
{F}_t\bigr) \bigr)
\\
&=&\mathbb{E}^\mathsf{P} \bigl(\mathbh{1}_AX_t
\cdot\mathsf{Q}\bigl(\tau ^X>T|\mathcal{F}_t\bigr) \bigr)
\qquad \mathsf{P}\mbox{-a.s.}
\end{eqnarray*}
Taking expectations with respect to $\mathsf{P}$ in (\ref{bub}) yields
\begin{eqnarray*}
\mathbb{E}^\mathsf{P}\gamma_X(t,T)&=& \mathbb{E}^\mathsf{P}
\bigl(X_t\cdot\mathsf{Q}\bigl(\tau^X\leq T|\mathcal{F}
_t\bigr) \bigr)=\mathbb{E}^\mathsf{Q} \bigl(
\mathbh{1}_{\{\tau^X>t\}
}\mathsf{Q}\bigl(\tau^X\leq T|\mathcal{F}
_t\bigr) \bigr)\\
&=& \mathsf{Q}\bigl(t<\tau^X\leq T\bigr).
\end{eqnarray*}
Thus, the default term is directly related to the expected bubble of
the underlying. It measures how much the failure of the martingale
property by $X$ affects the option price. If $X$ is a true martingale,
it will equal zero.
\end{rem}

The convergence conditions that must be fulfilled in Theorem~\ref{inbetween} may seem to be rather strict. However, below we give a few
examples of options which satisfy those conditions.

%
\begin{ex}
Let us consider a modified call option with maturity $T$ and strike
$K$, where the holder has the option to reset the strike value to the
current stock price at certain points in time $t_1<t_2<\cdots<t_n<T$,
that is, the payoff profile of the option is given by
\[
H(X)=\bigl(X_T-\min(K,X_{t_1},X_{t_2},
\ldots,X_{t_n})\bigr)^+.
\]
With the notation in Theorem~\ref{inbetween} and setting $t_{n+1}=T$,
it follows that
\[
\eta_0=\eta_1=\cdots=\eta_{n}=1
\]
and the option value can be decomposed as
\begin{eqnarray*}
\mathbb{E}^\mathsf{P}h(X)&=&\mathbb{E}^\mathsf{Q} \biggl(1-
\frac
{1}{X_T}\cdot\min (K,X_{t_1},\ldots,X_{t_n} )
\biggr)^+-\sum_{k=0}^{n}\mathsf {Q}
\bigl(t_k<\tau^X\leq t_{k+1} \bigr)
\\
&=&\mathbb{E}^\mathsf{Q} \biggl(1-\frac{1}{X_T}\cdot\min
(K,X_{t_1},\ldots,X_{t_n} ) \biggr)^+-\gamma_X(0,T).
\end{eqnarray*}
Therefore, this modified call option has the same default as the normal
call option (cf. equation (14) in \cite{PalP}).
\end{ex}

%
\begin{ex}
Let us consider a call option on the ratio of the stock price at times
$T$ and $S\leq T$ with strike $K\in\mathbb{R}_+$, that is,
\[
h(X)= \biggl(\frac{X_T}{X_S}-K \biggr)^+
\]
for $S<T\in\mathbb{R}_+$. In this case,
\[
\eta_0=0, \qquad \eta_1(y)=y
\]
and the decomposition of the option value is given by
\[
\mathbb{E}^\mathsf{P}h(X)=\mathbb{E}^\mathsf{Q} \biggl(
\frac
{1}{X_S}-\frac{K}{X_T} \biggr)^+-\mathbb{E} ^\mathsf{Q}
\biggl(\mathbh{1}_{\{S<\tau^X\leq T\}}\frac
{1}{X_S} \biggr).
\]
\end{ex}

%
\begin{ex}
A \textit{chooser option} with maturity $T$ and strike $K$ entitles
the holder to decide at time $S<T$, whether the option is a call or a
put. He will choose the call, if its value is as least as high as the
value of the put option with strike $K$ and maturity $T$ at time $S$.
However, in the presence of asset price bubbles, that is, when the
underlying is a strict local martingale, put-call-parity does not hold,
but instead we have
\[
\mathbb{E}^\mathsf{P}\bigl((X_T-K)^+|\mathcal{F}_S
\bigr)-\mathbb{E}^\mathsf {P}\bigl((K-X_T)^+|
\mathcal{F}_S\bigr)=\mathbb{E}^\mathsf{P}(X_T|
\mathcal{F}_S)-K.
\]
Therefore, the payoff of the chooser option equals
\[
h(X_S,X_T)=(X_T-K)^+\mathbh{1}_{\{\mathbb{E}^\mathsf{P}(X_T|\mathcal
{F}_S)\geq K\}}+(K-X_T)^+
\mathbh{1}_{\{\mathbb{E}
^\mathsf{P}(X_T|\mathcal{F}_S)<K\}}.
\]
Let us assume that $X$ is Markovian. Then we can express $\mathbb
{E}^\mathsf{P}
(X_T|\mathcal{F}_S)$ as a function of $X_S$, say $\mathbb{E}^\mathsf
{P}(X_T|\mathcal{F}_S)=m(X_S)$,
and the limits defined in Theorem~\ref{inbetween} exist, if $m$ is
monotone for large values, and equal
\[
\eta_1(y)=\mathbh{1}_{ \{m ({1}/{y} )\geq
K \}}, \qquad \eta_0=\lim
_{x\rightarrow\infty}\mathbh{1}_{ \{m(x)\geq
K \}}.
\]
Thus, the value of the chooser option can be decomposed as
\begin{eqnarray*}
\mathbb{E}^\mathsf{P}h(X_S,X_T)&=&
\mathbb{E}^\mathsf{Q} \biggl(\frac{h(X_S,X_T)}{X_T} \biggr)-\mathsf {Q}
\bigl(m(X_S)\geq K, S<\tau^X\leq T \bigr)
\\
&&{}-\lim_{x\rightarrow\infty}\mathbh {1}_{ \{
m(x)\geq K \}}\mathsf{Q}\bigl(
\tau^X\leq S\bigr).
\end{eqnarray*}
If $X$ is the reciprocal of a BES(3)-process under $\mathsf{P}$, it is
calculated in Section~2.2.2 in \cite{CoxHobson} that
\[
m(X_S)=\mathbb{E}^\mathsf{P}(X_T|X_S)=X_S
\biggl(1-2\Phi \biggl(-\frac
{1}{X_S\sqrt
{T-S}} \biggr) \biggr).
\]
Therefore,
\begin{eqnarray*}
\lim_{x\rightarrow\infty}m(x) &=& \lim_{x\rightarrow\infty}\mathbb
{E}^\mathsf{P}(X_T|X_S=x)= \lim
_{x\rightarrow\infty}2\varphi \biggl(-\frac{1}{x\sqrt{T-S}} \biggr)
\frac
{1}{\sqrt{T-S}}\\
&=& \frac{\sqrt{2}}{\sqrt{\pi(T-S)}}
\end{eqnarray*}
and
\[
\eta_1(y)=\mathbh{1}_{ \{{1}/{y} (1-2\Phi
(-{y}/{\sqrt{T-S}} ) )\geq K \}}, \qquad\eta_0=\mathbh
{1}_{ \{
{\sqrt{2}}/{\sqrt{\pi(T-S)}}>K \}}.
\]
\end{ex}

%
\begin{rem}\label{price}
Here, we take the approach of valuating options by risk-neutral
expectations. While there may be other approaches, risk-neutral
expectations do not create arbitrage in the market, even though the
stock itself is not priced that way. Indeed, $\mathsf{P}$ remains an
ELMM in
the enlarged market also after adding any asset $V_t = \mathbb
{E}^\mathsf{P}[H | \mathcal{F}
_t], t\leq T$, for some integrable $H\in\mathcal{F}_T$.
Interestingly, by
choosing $H=X_T$ we may have $V_0< X_0$ (in the case where $X$ is a
strict local martingale). But it
is impossible to short $X$ and take a long position on $V$ all the way
up to $T$ because of credit constraints, therefore, NFLVR is not violated.
\end{rem}

In the following, we give another extension of Proposition~7 in \cite{PalP} to Barrier options, that is, we allow the options to be
knocked-in or knocked-out by passing some pre-specified level.

%
\begin{thm}\label{barrier}
Consider any nonnegative Borel-measurable function $h\dvtx \mathbb
{R}_{++}\rightarrow\mathbb{R}
_+$ and define $g(x)=x\cdot h (\frac{1}{x} )$ for $x>0$.
Suppose that $\lim_{x\rightarrow0} g(x)=:\eta<\infty$ exists and
denote by
$\overline{g}\dvtx\mathbb{R}_+\rightarrow\mathbb{R}_+$ the extension of
$g$ with $\overline
{g}(0)=\eta$. Define $\hat{m}^X_T:=\min_{t\leq T}X_t$, $m_T^X:=\max_{t\leq T}X_t$ as well as $T^X_a:=\inf\{t\geq0\dvtx X_t\leq a\}$ for
$a\in\mathbb{R}_+$. Then for any bounded stopping time $T$ and for
any real
numbers $D\leq1$ and $F\geq1$:
\renewcommand{\theequation}{DI}
\begin{equation}
\label{di}\hspace*{17pt}\mathbb{E}^\mathsf{P} \bigl(h(X_T)\mathbh{1}_{\{\hat
{m}_T^X\leq D\}}
\bigr)=\mathbb{E}^\mathsf{Q} \biggl(g \biggl(\frac{1}{X_T} \biggr)
\mathbh{1}_{\{\hat{m}_T^X\leq D\}
} \biggr)-\eta\cdot\mathsf{Q} \bigl(T^X_{D}<
\tau^X\leq T \bigr),
\end{equation}\vspace*{-10pt}
\renewcommand{\theequation}{DO}
\begin{equation}\label{do}
\quad\qquad\hspace*{-7pt}\mathbb{E}^\mathsf{P} \bigl(h(X_T)\mathbh{1}_{\{\hat
{m}_T^X\geq D\}}
\bigr)=\mathbb{E}^\mathsf{Q} \biggl(g \biggl(\frac{1}{X_T} \biggr)
\mathbh{1}_{\{\hat{m}_T^X\geq D\}
} \biggr)-\eta\cdot\mathsf{Q} \bigl(T^X_{D}=
\infty,\tau^X\leq T \bigr),\hspace*{-8pt}
\end{equation}\vspace*{-10pt}
\renewcommand{\theequation}{UI}
\begin{equation}
\label{ui}\hspace*{-12pt}\mathbb{E}^\mathsf{P} \bigl(h(X_T)\mathbh{1}_{\{m_T^X\geq F\}
}
\bigr)=\mathbb{E}^\mathsf{Q} \biggl(g \biggl(\frac{1}{X_T} \biggr)
\mathbh{1}_{\{m_T^X\geq F\}} \biggr)-\eta\cdot \mathsf{Q} \bigl(\tau^X
\leq T \bigr),
\end{equation}\vspace*{-10pt}
\renewcommand{\theequation}{UO}
\begin{equation}\label{uo}
\hspace*{-89pt}\mathbb{E}^\mathsf{P} \bigl(h(X_T)\mathbh{1}_{\{m_T^X\leq F\}
}
\bigr)=\mathbb{E}^\mathsf{Q} \biggl(g \biggl(\frac{1}{X_T} \biggr)
\mathbh{1}_{\{m_T^X\leq F\}} \biggr).
\end{equation}
\end{thm}

Before proving the theorem, we remark that the result is intuitively
reasonable because the default only plays a role if the option is
active. Especially note that the default term for Up-and-Out options
(\ref{uo}) is equal to zero, since in this case we can replace $X$ by the
uniformly\vspace*{1pt} integrable martingale $X^{\tilde{\tau}^X_{F+1}}$ in the
definition of the option's payoff function, where $\tilde{\tau
}^X_a:=\inf\{t\geq0\dvtx X_t>a\}$ for any $a\geq1$.

\begin{pf*}{Proof of Theorem~\ref{barrier}}
Keeping in mind that $D\leq1$ and $F\geq1$, it follows from the
absolute continuity relationship between $\mathsf{P}$ and $\mathsf
{Q}$ that
\begin{eqnarray*}
\mathbb{E}^\mathsf{P} \bigl(h(X_T)\mathbh{1}_{\{\hat{m}_T^X\leq D\}
}
\bigr)&=& \mathbb{E}^\mathsf{Q} \biggl(g \biggl(\frac{1}{X_T} \biggr)
\mathbh {1}_{\{\tau^X>T, \hat
{m}_T^X\leq D\}} \biggr)
\\
&=&  \mathbb{E}^\mathsf{Q} \biggl(g \biggl(
\frac{1}{X_T} \biggr)\mathbh {1}_{\{\tau^X>T\geq T^X_D\}
} \biggr)
\\
&=&\mathbb{E}^\mathsf{Q} \biggl(g \biggl(\frac{1}{X_T} \biggr)
\mathbh {1}_{\{\hat{m}_T^X\leq
D\}} \biggr)-\eta\cdot\mathsf{Q} \bigl(T^X_{D}
\leq T, \tau^X\leq T \bigr)
\\
&=&\mathbb{E}^\mathsf{Q} \biggl(g \biggl(\frac{1}{X_T} \biggr)
\mathbh {1}_{\{\hat{m}_T^X\leq
D\}} \biggr)-\eta\cdot\mathsf{Q} \bigl(T^X_{D}<
\tau^X\leq T \bigr).
\end{eqnarray*}
This proves the formula for the Down-and-In barrier option (\ref{di}). The
other three formulas can be proven in a similar way by noting that
\begin{eqnarray*}
\mathsf{Q} \bigl(\tau^X\leq T<T^X_D
\bigr)&=&\mathsf{Q} \bigl(\tau ^X\leq T, T^X_D=
\infty \bigr),
\\
\mathsf{Q} \bigl(\tilde{\tau}^X_{F}\leq T,
\tau^X\leq T \bigr)&=&\mathsf{Q} \bigl(\tau^X\leq T
\bigr),
\\
\mathsf{Q} \bigl(\tau^X\leq T<\tilde{\tau}^X_{F}
\bigr)&=&0.
\end{eqnarray*}
\upqed\end{pf*}

%
\begin{rem}
Above we used the risk-neutral pricing approach to calculate the value
of some options written on a stock which may have an asset price
bubble, as suggested by the first fundamental theorem of asset pricing.
The derived decompositions show that there is an important difference
in the option value depending on whether the underlying is a strict
local or a true martingale under the risk-neutral measure, which is
reflected in the default term. Even though we do not create arbitrage
opportunities when pricing options by their fundamental values
calculated above, several authors have suggested to ``correct'' the
option price to account for the strictness of the local martingale
(cf., e.g., \cite{Carr,JPffbubbles,JPScomp,JPSincomp,MadanYor}). In~\cite{Carr}, the price of a contingent claim is defined as the minimal
super-replicating cost under both measures $\mathsf{P}$ and $\mathsf{Q}$
corresponding to two different currencies, where the process $X$ is
interpreted as the exchange rate between them. While the authors of
\cite{JPffbubbles,JPScomp,JPSincomp}
work under the additional No Dominance assumption, which is strictly
stronger than NFLVR, and allow for bubbles in the option prices within
this framework, in \cite{MadanYor} the following pricing formulas for
European and American call options written on (continuous) $X$ with
strike $K$ and maturity $T$ are suggested:
\begin{eqnarray*}
C^{\mathrm{strict}}_E(K,T)&:=&\lim_{n\rightarrow\infty}
\mathbb{E}^\mathsf {P}(X_{T\wedge\sigma
_n}-K)^+,
\\
C^{\mathrm{strict}}_A(K,T)&:=&\sup_{\sigma\in\mathcal{T}_{0,T}}\lim
_{n\rightarrow
\infty} \mathbb{E}^\mathsf{P}(X_{\sigma\wedge\sigma_n}-K)^+
\end{eqnarray*}
for some localizing sequence $(\sigma_n)_{n\in\mathbb{N}}$ of the (strict)
local martingale $X$. It is proven in \cite{MadanYor} that these
definitions are independent of the chosen localizing sequence and that
$C_E^{\mathrm{strict}}=C_A^{\mathrm{strict}}$. However, a generalization of this
definition to any other option $h(\cdot)$ on $X$ with maturity $T$ is
problematic: the independence of the chosen localizing sequence
$(\sigma_n)_{n\in\mathbb{N}}$ is not true in general, so one may
have to
choose $\sigma_n=\tau_n^X$ as defined above. Moreover, in general
$\lim_{n\rightarrow\infty}\mathbb{E}^\mathsf{P}h(X_T^{\sigma_n})$
may not be well defined
and equal to $\mathbb{E}^\mathsf{P}h(X_T)$, even when $X$ is a true
martingale, as
the following example shows.
\end{rem}

%
\begin{ex}
Suppose that $(\log(X_t)+t/2)_{t\geq0}$ is a Brownian motion, that
is, $X$ is a geometric Brownian motion, and consider the claim $h(X_T)$
with continuous payoff function
\[
h(x)=\sum_{n \in\mathbb{N}} \mathbh{1}_{\{n-a_n\leq x\leq n+a_n\}
}f_n
\biggl(n-\frac
{n|x-n|}{a_n} \biggr) \qquad \mbox{with } f_n(z)=
\frac{1}{\mathsf{P}(\tau
_n^X\leq
1)}\cdot\frac{z}{n},
\]
where each $a_n\in(0,1)$ is chosen small enough such that
\[
2n^2\cdot\mathsf{P}(n-a_n\leq X_1\leq
n+a_n)\leq\mathsf{P}\bigl(\tau ^X_n\leq1
\bigr).
\]
Let us set $T=1$ and $\sigma_n=\tau_n^X$ for all $n\in\mathbb{N}$.
In this case,
\[
\mathbb{E}^\mathsf{P}h(X_{1\wedge\tau^X_n})\geq\mathsf{P}\bigl(\tau
_n^X\leq1\bigr)f_n(n)=1, \qquad n\in \mathbb{N},
\]
but
\begin{eqnarray*}
\mathbb{E}^\mathsf{P}h(X_1)&\leq& \sum
_{n\in\mathbb{N}}\mathsf {P} (n-a_n\leq X_1\leq
n+a_n )f_n(n)
\\
&\leq&\sum_{n\in\mathbb{N}}\frac{\mathsf{P}(\tau^X_n\leq
1)}{2n^2}\cdot
f_n(n)=\frac{\pi^2}{12}<1.
\end{eqnarray*}

Since in this example there are no asset price bubbles, it does not
seem correct to trade the option for a price which differs from its
fundamental value. Therefore, in the case where we have a decomposition
of the fundamental option value as above or more generally as proven in
Theorem~\ref{inbetween}, this suggests that the most sensible approach
to correct the option value for bubbles in the underlying is to set the
default term equal to zero. Equivalently, we can also set $\tau^X$
equal to infinity under the measure $\mathsf{Q}$. This even gives a
way of
correcting the option value for stock price bubbles in the general
case, where a decomposition formula may not be available, leaving open
the question of why this should give an arbitrage-free pricing rule. By
doing so, we would basically treat the price process as if it were a
true martingale. However, we want to emphasize that it is not necessary
to correct the price at all, since the fundamental value gives an
arbitrage-free price as explained in Remark~\ref{price}.
\end{ex}

\section{Relationship between $\mathsf{P}$ and $\mathsf{Q}$}\label{PQ}

In the following, we study the relationship between the original
measure $\mathsf{P}$ and the measure $\mathsf{Q}$ in more detail. We
suppose that
assumption (S) is valid throughout the entire section.

%
\begin{lem}
Set $X=\tilde{X}$, that is, $X_t=\infty$ on $\{t\geq\tau^X\}$.
Then, $\mathsf{Q}(X_\infty=\infty)=1  \Leftrightarrow  \mathsf
{P}(X_\infty=0)=1$.
\end{lem}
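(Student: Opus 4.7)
The plan is to reduce the equivalence to a single, symmetric identity that relates an expectation under $\p$ of a bounded functional of $X_\infty$ to an expectation under $\q$ of the corresponding functional of $1/X_\infty$, and then read off both implications from this identity. The clean choice of test function is $x \mapsto x \wedge 1$, because then the integrand transforms nicely under the density $1/X_t$ and stays bounded, which will let me pass to the limit in $t$ without delicate integrability arguments.

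Concretely, I would first record, for each fixed $t \geq 0$, the identity
\begin{equation*}
\E^\p\bigl(X_t \wedge 1\bigr) \;=\; \E^\q\!\left(\min\!\Bigl(1,\tfrac{1}{X_t}\Bigr)\right),
\end{equation*}
which follows from Theorem \ref{komplett} and the convention $X=\tilde X$: the Radon--Nikodym identity $\left.\frac{d\p}{d\q}\right|_{\F_t \cap \F_{\tau^X-}} = \frac{1}{X_t}\indic_{\{t<\tau^X\}}$ gives $\E^\p(Y) = \E^\q(Y/X_t)$ for any non-negative $\F_t$-measurable $Y$, and plugging in $Y=X_t\wedge 1$ (which is $\p$-a.s.\ finite since $\p(\tau^X=\infty)=1$) yields the right-hand side, using the convention $1/\infty = 0$ on $\{t\geq\tau^X\}$.

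Next I would let $t\to\infty$ on both sides using dominated convergence. On the $\p$-side, $X$ is a non-negative $\p$-local martingale, hence a supermartingale, so $X_t\to X_\infty$ $\p$-a.s.\ with $X_\infty\in[0,\infty)$, and $X_t\wedge 1$ is bounded by $1$. On the $\q$-side, the previous lemma shows $1/X$ is a true $\q$-martingale, hence $1/X_t\to 1/X_\infty$ $\q$-a.s.\ (with $1/X_\infty=0$ on $\{X_\infty=\infty\}$), and $\min(1,1/X_t)$ is again bounded by $1$. Passing to the limit gives
\begin{equation*}
\E^\p\bigl(X_\infty\wedge 1\bigr) \;=\; \E^\q\!\left(\min\!\Bigl(1,\tfrac{1}{X_\infty}\Bigr)\right).
\end{equation*}

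Finally I would read off both implications from this single equality. Since both sides are expectations of non-negative random variables, vanishing of one side is equivalent to the respective integrand being almost surely zero: the left-hand side equals $0$ iff $X_\infty = 0$ $\p$-a.s., and the right-hand side equals $0$ iff $1/X_\infty = 0$ $\q$-a.s., i.e.\ iff $X_\infty = \infty$ $\q$-a.s. I do not expect any serious obstacle here; the only point worth a moment of care is that the identity $\E^\p(Y)=\E^\q(Y/X_t)$ is used with $Y$ taking the value $1$ on the $\q$-set $\{t\geq\tau^X\}$ where $X_t=\infty$, and the convention $1/\infty = 0$ makes both the definition of $Y/X_t$ and the extension $1/\tilde X_t = (1/X_t)\indic_{\{t<\tau^X\}}$ consistent with the preceding lemma.
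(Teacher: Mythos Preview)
Your argument is correct and is genuinely different from the paper's proof. The paper treats the two implications separately: for $\Leftarrow$ it bounds $\E^\q\bigl(\tfrac{1}{X_\infty}\indic_{\{\tau^X>t,\,X_t>u\}}\bigr)$ via Fatou and then shows the limit is $0$ for every $u>0$, while for $\Rightarrow$ it bounds $\E^\p\bigl(X_\infty\indic_{\{X_t<k\}}\bigr)$ by $\q(X_t<k)$ using the supermartingale property and again passes to the limit. Your approach collapses both directions into the single symmetric identity $\E^\p(X_\infty\wedge 1)=\E^\q\bigl(\min(1,1/X_\infty)\bigr)$, obtained by one change-of-measure computation with the bounded test function $x\mapsto x\wedge 1$ followed by one application of dominated convergence on each side. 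This is cleaner and more conceptual; the paper's route, by contrast, avoids having to justify that the fixed-$t$ identity holds (it never writes it down), trading elegance for a more hands-on argument.

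One small point of exposition worth tightening: your sentence ``$\E^\p(Y)=\E^\q(Y/X_t)$ for any non-negative $\F_t$-measurable $Y$'' is, strictly speaking, only available for $Y$ that is $\F_t\cap\F_{\tau^X-}$-measurable, and the exact formula carries the factor $\indic_{\{t<\tau^X\}}$. You use it only for $Y=X_t\wedge 1$, which with the convention $X=\tilde X$ is indeed $\F_t\cap\F_{\tau^X-}$-measurable, and you correctly note that the indicator is absorbed by the convention $1/\infty=0$; so the computation is fine, but it would read more cleanly if you stated the identity with the indicator from the start and then observed that it drops out for this particular $Y$.
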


\begin{pf}
Since $X$ is a $\mathsf{P}$-super-martingale and $\frac{1}{X}$ a
$\mathsf{Q}$-martingale, both converge and, therefore, $X_\infty$ is almost
surely well defined under both measures.

\underline{$\Leftarrow$}: Assume that $\mathsf{P}(X_\infty=0)=1$.
Because $1/X$ is a $\mathsf{Q}
$-martingale, we have by Fatou's lemma for all $u>0$,
\begin{eqnarray*}
\mathbb{E}^\mathsf{Q} \biggl(\frac{1}{X_\infty}\mathbh{1}_{\{\tau
^X>t,X_t>u\}}
\biggr)&\leq & \liminf_{n\rightarrow\infty}\mathbb{E}^\mathsf{Q}
\biggl(\frac
{1}{X_{t+n}}\mathbh{1}_{\{\tau
^X>t,X_t>u\}} \biggr)\\
&=& \mathbb{E}^\mathsf{Q}
\biggl(\frac
{1}{X_t}\mathbh{1}_{\{\tau
^X>t,X_t>u\}} \biggr)
\\
&=&\mathsf{P}(X_t>u).
\end{eqnarray*}
By dominated convergence for $t\rightarrow\infty$,
\[
\mathbb{E}^\mathsf{Q} \biggl(\frac{1}{X_\infty}\mathbh{1}_{\{\tau
^X=\infty,X_\infty>u\}
}
\biggr)\leq\mathsf{P}(X_\infty\geq u)=0 \qquad \forall u>0.
\]
This implies that
\[
\mathbb{E}^\mathsf{Q} \biggl(\frac{1}{X_\infty}\mathbh{1}_{\{\tau
^X=\infty,X_\infty>0\}
}
\biggr)=0.
\]
Since $\frac{1}{X}$ is a $\mathsf{Q}$-martingale,
\[
\mathbb{E}^\mathsf{Q} \biggl(\frac{1}{X_\infty} \biggr)\leq\mathbb
{E}^\mathsf{Q} \biggl(\frac
{1}{X_t} \biggr)=1.
\]
Thus, $\mathsf{Q}(X_\infty=0)=0$ and
\[
\mathbb{E}^\mathsf{Q} \biggl(\frac{1}{X_\infty}\mathbh{1}_{\{\tau
^X=\infty\}}
\biggr)=0  \quad \Leftrightarrow\quad  \frac{1}{X_\infty}\mathbh{1}_{\{\tau^X=\infty\}
}=0\qquad
\mathsf{Q}\mbox{-a.s.}
\]
Since $\frac{1}{X_\infty}\mathbh{1}_{\{\tau^X<\infty\}}=0$, it
follows that
$\frac{1}{X_\infty}=0$ $\mathsf{Q}$-almost surely.

\underline{$\Rightarrow$}: Assume that $\mathsf{Q}(X_\infty=\infty
)=1$. Because $X$ is a
$\mathsf{P}$-super-martingale, we have
\[
\mathbb{E}^\mathsf{P}X_\infty\leq\mathbb{E}^\mathsf{P}X_t
\leq1
\]
and
\begin{eqnarray}
\mathbb{E}^\mathsf{P} (X_\infty\mathbh{1}_{\{X_t< k\}} )\leq
\mathbb{E}^\mathsf{P} (X_t\mathbh{1} _{\{X_t< k\}} )=
\mathsf{Q}\bigl(t<\tau^X,X_t< k\bigr)=
\mathsf{Q}(X_t< k)\nonumber
\\
\eqntext{\forall k\geq0.}
\end{eqnarray}
For $t\rightarrow\infty$ by dominated convergence then
\[
\mathbb{E}^\mathsf{P} (X_\infty\mathbh{1}_{\{X_\infty< k\}
} )\leq
\mathsf{Q}(X_\infty< k)=0\qquad  \forall k\geq0.
\]
This implies that $X_\infty\mathbh{1}_{\{X_\infty< k\}}=0$
$\mathsf{P}$-a.s. for all
$k\geq0$. Therefore, $\mathsf{P}(X_\infty\in\{0, \infty\})=1$.
Since $\mathbb{E}^\mathsf{P}(X_\infty)\leq1$, it follows that $\mathsf{P}(X_\infty
=\infty)=0$, and
thus $X_\infty=0$ $\mathsf{P}$-almost surely.
\end{pf}

Until here, we have only considered the behaviour of the local $\mathsf{P}$-martingale $X$ under
$\mathsf{Q}$. But how do other processes change their
behaviour, when passing from $\mathsf{P}$ to $\mathsf{Q}$? This
question is of
particular interest, since we want to apply our results to the pricing
of options written on more than one underlying stock. Let us assume
that besides $X$ there exists another process $Y$ on $(\Omega,\mathcal
{F},(\mathcal{F}
_t)_{t\geq0},\mathsf{P})$. For all $n\in\mathbb{N}$ we set $\tau
_n^Y=\inf\{t\geq
0\dvtx Y_t>n\}\wedge n$ and $\tau^Y=\lim_{n\rightarrow\infty}\tau
_n^Y$. Note
that in what follows we identify $Y$ with the process $\tilde{Y}$
defined above.

%
\begin{lem}\label{explosion} Let $Y$ be a nonnegative c\`adl\`ag
local $\mathsf{P}$-martingale. Then $\mathsf{Q}( \tau^X\leq
\tau^Y ) = 1$.
\end{lem}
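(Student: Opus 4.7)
The plan is to establish the equivalent statement $\q(\tau^Y < \tau^X) = 0$, by exhausting the event $\{\tau^Y < \tau^X\}$ along the localizing sequence $(\tau^X_n)$ and exploiting the fact that on each $\sigma$-algebra $\F_{\tau^X_n}$ the measure $\q$ is explicitly described by Theorem \ref{komplett} via the density $X_{\tau^X_n}$ with respect to $\p$.

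First, since $(\tau^X_n)$ is an increasing sequence with $\tau^X_n \uparrow \tau^X$, any sample point in $\{\tau^Y < \tau^X\}$ satisfies $\tau^Y < \tau^X_n$ for all sufficiently large $n$, so
\[
\{\tau^Y < \tau^X\} \;\subseteq\; \bigcup_{n \in \N} \{\tau^Y \leq \tau^X_n\} \;=:\; \bigcup_{n \in \N} A_n.
\]
By the standard measurability property of ordered stopping times, each $A_n$ lies in $\F_{\tau^X_n}$, and Theorem \ref{komplett} then yields the explicit evaluation
\[
\q(A_n) \;=\; \E^\p\bigl( X_{\tau^X_n} \indic_{A_n}\bigr).
\]

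Now I would invoke the hypothesis that $Y$ is a strictly positive c\`adl\`ag local $\p$-martingale: such a process is a nonnegative $\p$-supermartingale and therefore does not explode under $\p$, i.e.~$\tau^Y = \infty$ $\p$-a.s. Since $\tau^X_n \leq n < \infty$ by construction, we have $A_n \subseteq \{\tau^Y \leq n\}$ and hence $\p(A_n) = 0$. Plugging this into the displayed formula gives $\q(A_n) = 0$ for every $n$, and a countable union yields $\q(\tau^Y < \tau^X) = 0$, which is the claim.

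The argument is short; the only genuinely subtle point is the choice of decomposition. A first attempt using directly the density on $\F_t \cap \F_{\tau^X-}$ from Theorem \ref{komplett} is awkward, since on $\{\tau^X > t\}$ one would need to compare $\tau^Y$ with the random time $\tau^X$ itself rather than with the deterministic bound $n$; localizing via $\tau^X_n$ converts this into the straightforward statement that under $\p$ no positive local martingale explodes in finite deterministic time.
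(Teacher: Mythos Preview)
Your proof is correct and follows essentially the same route as the paper: decompose $\{\tau^Y<\tau^X\}$ via the localizing sequence $(\tau^X_n)$, use that $\q|_{\F_{\tau^X_n}}=X_{\tau^X_n}\cdot\p|_{\F_{\tau^X_n}}$, and conclude from $\p(\tau^Y<\infty)=0$ and $\tau^X_n\leq n$. The only cosmetic differences are that the paper uses the events $\{\tau^Y<\tau^X_n\}$ and passes to the limit directly, whereas you use $\{\tau^Y\leq\tau^X_n\}$ and a countable union; both variants work for the same reason.
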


\begin{pf}
\[
\mathsf{Q}\bigl(\tau^Y<\tau^X\bigr)=\lim
_{n\rightarrow\infty}\mathsf {Q}\bigl(\tau^Y<
\tau^X_n\bigr)=\lim_{n\rightarrow\infty}
\mathbb{E}^\mathsf{P} (X_{\tau
_n^X}\mathbh{1}_{\{\tau^Y<\tau_n^X\}
} )=0.
\]
\upqed\end{pf}

Moreover, we introduce condition (T): $\mathsf{Q}(\tau
^X=\tau^Y<\infty)=0$.

Clearly, (T) is always fulfilled if $X$ is a true martingale. Moreover,
condition~(T) also holds, if $X$ and $Y$ are independent under $\mathsf{P}$.
Indeed, in this case for every $n\in\mathbb{N}$
\begin{eqnarray*}
&& \mathsf{Q}\bigl(\tau^Y=\tau^X<n\bigr)\\
&& \qquad =\lim
_{m\rightarrow\infty}\mathsf {Q}\bigl(\tau^Y_m<\tau
^X<n\bigr)=\lim_{m\rightarrow\infty}\lim_{k\rightarrow\infty}
\mathsf {Q}\bigl(\tau^Y_m<\tau ^X_k<n
\bigr)
\\
&&\qquad=\lim_{m\rightarrow\infty}\lim_{k\rightarrow\infty}\mathbb
{E}^\mathsf{P} (X_{\tau^X_k}\mathbh{1} _{\{\tau^Y_m<\tau^X_k<n\}} )\leq\lim
_{m\rightarrow\infty
}\lim_{k\rightarrow\infty}\mathbb{E}^\mathsf{P}
(X_{\tau
^X_k}\mathbh{1}_{\{\tau^Y_m<n\}} )
\\
&&\qquad= \lim_{m\rightarrow\infty}\lim_{k\rightarrow\infty}\mathbb
{E}^\mathsf{P}X_{\tau
^X_k}\cdot\mathsf{P}\bigl(\tau^Y_m<n
\bigr)=\lim_{m\rightarrow\infty}\mathsf {P}\bigl(\tau^Y_m<n
\bigr)=0.
\end{eqnarray*}

However, in general it is hard to check condition (T), since it
requires some knowledge of the joint distribution of $\tau^X_n$ and
$\tau^Y_m$ for $n,m$ large.

If $X$ and $Y$ are assumed to be c\`adl\`ag processes under $\mathsf
{P}$, they
are also almost surely c\`adl\`ag under $\mathsf{Q}$ before time $\tau^X$
because $\mathsf{P}$ and $\mathsf{Q}$ are equivalent on every
$\mathcal{F}_{\tau^X_n}$.
Furthermore, since $\frac{1}{X}$ is a $\mathsf{Q}$-martingale, it
does not
explode and, therefore, $X_{t-}\neq0$ and $X_t\neq0$ $\mathsf{Q}$-almost
surely for all $t\geq0$. Thus, the process $Z:=\frac{Y}{X}$ does also
have almost surely c\`adl\`ag paths before time $\tau^X$. Since from
time $\tau^X$ on everything is constant, the only crucial question is
whether $Z=\frac{Y}{X}$ has a left-limit at $\tau^X$.

%
\begin{lem}\label{yyy}
Let $Y$ be a nonnegative local $\mathsf{P}$-martingale. Then
$Z_t:=
(\frac{Y_{t}}{X_{t}} )_{0\leq t<\tau^X}$ is a local martingale
on $(\Omega,\mathcal{F}_{\tau^X-},(\mathcal{F}_{t\wedge\tau
^X-})_{t\geq0},\mathsf{Q})$.
Furthermore, setting $Z_t:=\tilde{Z}_t$ and $X_t=\infty$ on $\{t\geq
\tau^X\}$ is the unique way to define $Z$ and $X$ after time $\tau^X$
such that $\frac{1}{X}$ and $Z$ remain nonnegative c\`adl\`ag local
martingales on $[0,\infty)$ for all possible extensions of the measure
$\mathsf{Q}$ from $\mathcal{F}_{\tau^X-}$ to $\mathcal{F}=\bigvee_{t\geq0}\mathcal{F}_t$.
\end{lem}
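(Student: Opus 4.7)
The proof splits into two pieces.

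\textbf{Local martingale property.} I would localize in $X$ and $Y$ simultaneously by setting $\rho_n := \tau^X_n \wedge \sigma^Y_n$, where $(\sigma^Y_n)_{n\in\N}$ is a localizing sequence for $Y$ chosen so that each $Y^{\sigma^Y_n}$ is a uniformly integrable $\p$-martingale. First I would verify that $\rho_n \uparrow \tau^X$ $\q$-a.s.\ on $(\Omega,\F_{\tau^X-})$: since $\tau^X_n\uparrow\tau^X$ by construction, it suffices that $\sigma^Y_n > t$ eventually on $\{t<\tau^X\}$, which follows from the density identity $\q|_{\F_{\tau^X_n}}=X_{\tau^X_n}\cdot\p|_{\F_{\tau^X_n}}$ of Theorem~\ref{komplett} combined with $\sigma^Y_n\uparrow\infty$ $\p$-a.s.\ via
\[
\q\bigl(\sigma^Y_n>t,\; t<\tau^X\bigr)=\E^\p\bigl(X_t\indic_{\{\sigma^Y_n>t\}}\bigr)\longrightarrow\E^\p(X_t)=\q(t<\tau^X).
\]
The $\q$-martingale property of $Z^{\rho_n}$ is then a direct change-of-measure calculation: for $s\le t$ and $A\in\F_{s\wedge\rho_n}$,
\[
\E^\q(Z_{t\wedge\rho_n}\indic_A)=\E^\p\bigl(Z_{t\wedge\rho_n}X_{t\wedge\rho_n}\indic_A\bigr)=\E^\p(Y_{t\wedge\rho_n}\indic_A)=\E^\p(Y_{s\wedge\rho_n}\indic_A)=\E^\q(Z_{s\wedge\rho_n}\indic_A),
\]
using the $\p$-martingale property of $Y^{\rho_n}$ in the middle step.

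\textbf{Uniqueness of the extensions.} Both $1/X$ and $Z$, being non-negative c\`adl\`ag local $\q$-martingales, are $\q$-supermartingales, and a non-negative c\`adl\`ag supermartingale is absorbed once it reaches zero. The preceding lemma yields $1/X_{\tau^X-}=0$ $\q$-a.s.\ on $\{\tau^X<\infty\}$, so c\`adl\`ag regularity together with the absorption property forces $1/X_t=0$, equivalently $X_t=\infty$, for all $t\ge\tau^X$. For $Z$ the supermartingale convergence theorem guarantees that $Z_{\tau^X-}=\liminf_{s\uparrow\tau^X,\,s\in\Q}Z_s$ exists in $[0,\infty)$ $\q$-a.s.\ and equals $\tilde Z_{\tau^X}$; setting $Z_t:=\tilde Z_t$ for $t\ge\tau^X$ produces a process that is constant on $[\tau^X,\infty)$ and pastes c\`adl\`ag-ly with $Z$ on $[0,\tau^X)$, hence a non-negative c\`adl\`ag local $\q$-martingale on $[0,\infty)$ for any extension of $\q$, with $\rho_n\vee n$ as a localizing sequence.

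For the uniqueness claim, any competing extension $Z'$ must agree with $Z$ on $[0,\tau^X)$ and therefore satisfies $Z'_{\tau^X-}=Z_{\tau^X-}$ by right-continuity. The requirement that $Z'$ remain a local martingale under \emph{every} extension of $\q$ from $\F_{\tau^X-}$ to $\F$ is what rules out any other choice: choosing an extension of $\q$ whose post-$\tau^X$ behaviour is a fixed deterministic trajectory makes $Z'_{\tau^X}-Z_{\tau^X-}$ an $\F_{\tau^X-}$-measurable random variable, and the local martingale property at the stopping time $\tau^X$ then forces this difference to vanish $\q$-a.s. I expect this final step, namely the careful use of optional sampling for non-negative local martingales at $\tau^X$ under an adversarially chosen measure extension, to be the main technical hurdle; the rest is mechanical once the localizing sequence $\rho_n$ is in hand.
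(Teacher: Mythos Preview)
Your localization argument for the $\q$-local martingale property of $Z$ on $[0,\tau^X)$ is essentially the paper's argument; the paper uses the specific sequence $\tau_n^X\wedge\tau_n^Y$ but the mechanism is the same.

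There are, however, two genuine gaps in the second half. First, your claim that $\tilde Z$ is a local $\q$-martingale on all of $[0,\infty)$ with localizing sequence ``$\rho_n\vee n$'' does not work: since $\rho_n\le\tau_n^X\le n$, one has $\rho_n\vee n=n$ identically, so you are asserting that $\tilde Z$ stopped at the deterministic time $n$ is a martingale---which is exactly what needs proving. The paper handles this by first showing (via Fatou) that $\tilde Z$ is a non-negative $\q$-supermartingale, and then upgrading to a local martingale by verifying directly that $\E^\q\tilde Z_{\tau_n^Z}=Z_0$ for $\tau_n^Z=\inf\{t:\ Z_t>n\}\wedge n$; constant expectation for a supermartingale stopped at $\tau_n^Z$ forces the stopped process to be a true martingale.

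Second, your uniqueness sketch does not lead anywhere: extensions of $\q$ are measures on the fixed space $(\Omega,\F)$, not new ``post-$\tau^X$ trajectories'', so the phrase ``an extension whose post-$\tau^X$ behaviour is a fixed deterministic trajectory'' has no meaning here, and there is no optional-sampling identity that makes $Z'_{\tau^X}-Z_{\tau^X-}$ vanish from the local martingale property alone. The paper's device is quite different and worth noting: given a competing extension $\bar Z$ and an arbitrary extension $\q^0$ of $\q$, one builds a \emph{second} extension $\q^n$ of $\q$ by setting $d\q^n/d\q^0=\bar Z_{\tau_n^Z}/\bar Z^{\tau_n^Z}_{\tau^X-}$ (this density is $\F_{\tau^X-}$-conditionally of mean one, so $\q^n$ really does restrict to $\q$ on $\F_{\tau^X-}$). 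Requiring $\bar Z^{\tau_n^Z}$ to be a martingale under \emph{both} $\q^0$ and $\q^n$ then forces $\bar Z^{\tau_n^Z}$ and $(\bar Z^{\tau_n^Z})^2$ to be $\q^0$-martingales after $\tau^X$, hence $\bar Z$ is $\q^0$-a.s.\ constant there. This is the missing idea.
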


\begin{pf}
First, we show that $Z=\frac{Y}{X}$ is a local $\mathsf{Q}$-martingale on
$\bigcup_{n\in\mathbb{N}}[0,\tau^X_n]$ with localizing sequence
$(\tau
_n^Y\wedge\tau_n^X)_{n\in\mathbb{N}}$. Indeed, we have for all
$t\geq0$
and $n\in\mathbb{N}$,
\begin{eqnarray*}
\mathbb{E}^\mathsf{Q} (Z_{\tau_n^Y\wedge\tau^X_n}|\mathcal {F}_t )&=&
\mathbb{E}^\mathsf{Q} \biggl(\frac{Y_{\tau_n^Y\wedge\tau
^X_n}}{X_{\tau
_n^Y\wedge\tau_n^X}}\Big|\mathcal{F}_t
\biggr)= \mathbb{E}^\mathsf{P} \biggl(\frac{Y_{\tau_n^Y\wedge\tau
^X_n}}{X_{t\wedge
\tau_n^Y\wedge\tau^X_n}}\Big|
\mathcal{F}_t \biggr)= \frac{Y_{t\wedge
\tau_n^Y\wedge\tau^X_n}}{X_{t\wedge\tau_n^Y\wedge\tau^X_n}}
\\
&=&Z_{t\wedge\tau_n^Y\wedge\tau^X_n}
\end{eqnarray*}
and by Lemma~\ref{explosion} we know that $\tau_n^X\wedge\tau
_n^Y\rightarrow\tau^X$ $\mathsf{Q}$-almost surely. Since $Z$ is a
nonnegative local
super-martingale up to time $\tau^X$, we can apply Fatou's lemma twice
with $s\leq t$:
\begin{eqnarray*}
\tilde{Z}_s&=&\liminf_{u\rightarrow\tau^X,u<\tau^X,u\in\mathbb
{Q}}Z_{s\wedge u}=
\liminf_{u\rightarrow\tau^X,u<\tau^X,u\in\mathbb{Q}}\lim_{n\rightarrow\infty
}Z_{s\wedge u\wedge\tau_n^X\wedge\tau_n^Y}
\\
&\geq&\liminf_{u\rightarrow\tau^X,u<\tau^X,u\in\mathbb{Q}}\lim_{n\rightarrow\infty}\mathbb{E}
^\mathsf{Q} ( Z_{t\wedge u\wedge\tau_n^X\wedge\tau
_n^Y}|\mathcal{F}_s ) \geq\liminf
_{u\rightarrow\tau^X,u<\tau^X,u\in\mathbb{Q}}\mathbb {E}^\mathsf{Q} (Z_{t\wedge
u}|
\mathcal{F}_s )
\\
&\geq& \mathbb{E}^\mathsf{Q} \Bigl( \liminf_{u\rightarrow\tau
^X,u<\tau^X,u\in\mathbb{Q}
}Z_{t\wedge u}|
\mathcal{F}_s \Bigr)=\mathbb{E}^\mathsf{Q}(\tilde
{Z}_t|\mathcal{F}_s),
\end{eqnarray*}
where the second inequality is due to the fact that $\mathbb
{E}^\mathsf{Q}
( Z_{t\wedge u\wedge\tau_n^X\wedge\tau_n^Y}|\mathcal{F}
_s )
\geq\mathbb{E}^\mathsf{Q} (Z_{t\wedge u}|\mathcal{F}_s
)$ by the super-martingale
property. By the convergence theorem for positive super-martingales, we
conclude that $\tilde{Z}_{\tau^X-}=Z_{\tau^X-}$ exists $\mathsf{Q}$-almost
surely in $\mathbb{R}_+$. To see that $\tilde{Z}$ is indeed a local
martingale and not only a super-martingale, we show that $\tilde
{Z}^{\tau_n^Z}$ is a uniformly integrable martingale for all $n\in
\mathbb{N}
$, where $\tau_n^Z=\inf\{t\geq0| Z_t>n\}\wedge n$. Since $\tilde
{Z}$ is a nonnegative super-martingale, it is sufficient to prove that
the expectation of $\tilde{Z}^{\tau_n^Z}$ is constant:
\begin{eqnarray*}
\mathbb{E}^\mathsf{Q}\tilde{Z}_{\tau_n^Z}&=&\mathbb{E}^\mathsf
{Q} (\tilde{Z}_{\tau_n^Z}\mathbh{1} _{\{\tau_n^Z<\tau^X\}}+\tilde{Z}_{\tau_n^Z}
\mathbh{1}_{\{\tau
_n^Z\geq\tau
^X\}} )
\\
&=&\lim_{m\rightarrow\infty}\mathbb{E}^\mathsf{Q} (Z_{\tau
_n^Z}
\mathbh{1}_{\{\tau_n^Z<\tau
_m^X\wedge\tau_m^Y\}} ) +\mathbb{E}^\mathsf{Q} (\tilde
{Z}_{\tau^X-}\mathbh{1} _{\{\tau_n^Z\geq\tau^X\}} )
\\
&=& \lim_{m\rightarrow\infty}\mathbb{E}^\mathsf{Q} (Z_{\tau
_m^X\wedge\tau_m^Y}
\mathbh{1} _{\{\tau_n^Z<\tau_m^X\wedge\tau_m^Y\}} )+\mathbb{E}^\mathsf {Q} \Bigl(\lim
_{m\rightarrow\infty} Z_{\tau^X_m\wedge\tau_m^Y}\mathbh{1}_{\{
\tau_n^Z\geq\tau
^X\}} \Bigr)
\\
&=& \lim_{m\rightarrow\infty}\mathbb{E}^\mathsf{Q}Z_{\tau
_m^X\wedge\tau_m^Y}-
\lim_{m\rightarrow\infty}\mathbb{E}^\mathsf{Q} (Z_{\tau^X_m\wedge
\tau_m^Y}
\mathbh{1}_{\{\tau
^X>\tau_n^Z\geq\tau^X_m\wedge\tau_m^Y\}} )=Z_0.
\end{eqnarray*}
%
To prove the uniqueness of the extension of $Z$ for all possible
extensions of $\mathsf{Q}$ to $\mathcal{F}$, define for all $n\in
\mathbb{N}$, $\tau
^Z_n=\inf\{t\geq0\dvtx \overline{Z}_t>n\}$, where $\overline{Z}$ is an
arbitrary
c\`adl\`ag extension of $(Z_t)_{t<\tau^X}$. Then $(\tau^Z_n)_{n\in
\mathbb{N}
}$ is a localizing sequence for $\overline{Z}$ for all possible extensions
of $\mathsf{Q}$. Fix one of these extensions and call it $\mathsf
{Q}^0$. We have
\[
\mathbb{E}^{\mathsf{Q}^0}\bigl(\overline{Z}_t^{\tau_n^Z}|
\mathcal {F}_s\bigr)=\overline{Z}_s^{\tau_n^Z}\qquad
\forall  n\in\mathbb{N}.
\]
Now for fix $n\in\mathbb{N}$ define the new measure $\mathsf{Q}^n$
on $\mathcal{F}$ via
\[
\frac{d\mathsf{Q}^n}{d\mathsf{Q}^0}=\frac{\overline{Z}_{\tau
^Z_n}}{\overline{Z}^{\tau
_n^Z}_{\tau^X-}}.
\]
Note that $\mathsf{Q}^n$ is also an extension of $\mathsf{Q}$ from
$\mathcal{F}_{\tau^X-}$
to $\mathcal{F}$. Furthermore, for all $\varepsilon\geq0$,
\begin{eqnarray*}
\overline{Z}_{\tau^X-}^{\tau_n^Z}&=&  \mathbb{E}^{\mathsf{Q}^n} \bigl(
\overline{Z}_{\tau^X+\varepsilon
}^{\tau_n^Z}|\mathcal{F} _{\tau^X-} \bigr)=
\mathbb{E}^{\mathsf{Q}^0} \biggl( \frac{\overline{Z}_{\tau
^Z_n}}{\overline{Z}^{\tau
_n^Z}_{\tau^X-}}\cdot\overline{Z}_{\tau^X+\varepsilon}^{\tau
_n^Z}\Big|
\mathcal{F} _{\tau^X-} \biggr)\\
&=&  \mathbb{E}^{\mathsf{Q}^0} \biggl(
\frac{ (\overline{Z}^{\tau
^Z_n}_{\tau
^X+\varepsilon} )^2}{\overline{Z}^{\tau_n^Z}_{\tau
^X-}}\Big|\mathcal{F}_{\tau
^X-} \biggr),
\end{eqnarray*}
because $\overline{Z}^{\tau_n^Z}$ must also be a uniformly integrable
martingale under $\mathsf{Q}^n$. Therefore, $\overline{Z}^{\tau
_n^Z}$ and $(\overline
{Z}^{\tau_n^Z})^2$ are both $\mathsf{Q}^0$-martingales\vspace*{1pt} after time
$\tau^X-$,
which implies that $\overline{Z}_{\varepsilon+\tau^X}=\overline
{Z}_{\tau^X-}$ for all
$\varepsilon\geq0$. Thus, $\overline{Z}\equiv\tilde{Z}$ is
uniquely determined.
\end{pf}

As usual to simplify notation, we will identify $Z$ with the process
$\tilde{Z}$ in the following.

%
\begin{rem}\mbox{}
\begin{itemize}
\item Note that if condition (T) is satisfied, then $Z_{\tau
^X}=Z_{\tau^X-}=0$ on $\{\tau^X<\infty\}$ $\mathsf{Q}$-almost surely.
\item Even though we proved that $Z_{\tau^X-}$ exists $\mathsf{Q}$-a.s. and
also $X_{\tau^X-}$ is well defined, this does not allow us to infer
any conclusions about the set $\{Y_{\tau^X-}\mbox{ exists in }\mathbb
{R}_+\}
$ in general.
\item For our purposes it is sufficient that local $\mathsf{Q}$-martingales
are c\`adl\`ag almost everywhere, since we are only interested in
pricing and do not deal with an uncountable number of processes. One
should, however, have in mind that in order to have \textit
{everywhere} regular paths some kind of augmentation is needed
(cf. \cite{KN}).
\end{itemize}
\end{rem}

%
\begin{rem}\label{ztx}
If $\Omega=C'(\mathbb{R}_+,\overline{\mathbb{R}}_+^2)$ is the path
space introduced in
Lem\-ma~\ref{canonical}, $(X,Y)$ is the coordinate process, and $(\tilde
{\mathcal{F}}_t)_{t\geq0}$ is the canonical filtration generated by $(X,Y)$,
then under the assumptions of Lemma~\ref{yyy} we can extend $\mathsf
{Q}$ to
$\mathcal{F}=\bigvee_{t\geq0}\mathcal{F}_t$ such that
\[
\mathsf{Q} \bigl(\omega_1(t)=\infty, \omega_2(t)=
\omega_2\bigl(\tau ^X-\bigr) \  \forall t\geq
\tau^X \bigr)=1.
\]
\end{rem}

%
\begin{lem}\label{XY}
Let $Y$ be a nonnegative local $\mathsf{P}$-martingale and set
$Z:=\frac{Y}{X}$.
\begin{enumerate}[(2)]
\item[(1)] If $X$ is a $\mathsf{P}$-martingale, then $Z$ is a strict local
$\mathsf{Q}
$-martingale if and only if $Y$ is a strict local $\mathsf{P}$-martingale.
\item[(2)] Assume that $X$ is a strict local $\mathsf{P}$-martingale. Then:
\begin{longlist}[(a)]
\item[(a)] If $Y$ is a $\mathsf{P}$-martingale, then $Z$ is a $\mathsf
{Q}$-martingale and
$Z_{\tau^X}=0$ on $\{\tau^X<\infty\}$.
\item[(b)] If $Z$ is a strict local $\mathsf{Q}$-martingale or $Z$ is a
$\mathsf{Q}
$-martingale with $\mathsf{Q}(\tau^X<\infty, Z_{\tau^X}>0)>0$,
then $Y$
is a strict local $\mathsf{P}$-martingale.
\item[(c)] If $Z$ is a $\mathsf{Q}$-martingale and if condition ($\mathrm{T}$) holds,
then $Y$
is a $\mathsf{P}$-martingale.
\item[(d)] If $Y$ is a strict local $\mathsf{P}$-martingale and if
condition ($\mathrm{T}$)
holds, then $Z$ is a~strict local $\mathsf{Q}$-martingale.
\end{longlist}
\end{enumerate}
\end{lem}

\begin{pf}
\begin{enumerate}[(2)]
\item[(1)] This is obvious, because $\mathsf{Q}$ and $\mathsf{P}$ are
locally equivalent,
if $X$ is a true $\mathsf{P}$-martingale.
\item[(2)] First note that
\begin{eqnarray*}
\mathbb{E}^\mathsf{P}Y_0 &=& \mathbb{E}^\mathsf{Q}Z_0
\geq \mathbb {E}^\mathsf{Q}Z_t=\mathbb{E}^\mathsf{Q}
(Z_t\mathbh{1}_{\{
t<\tau
^X\}} )+\mathbb{E}^\mathsf{Q}
(Z_t\mathbh{1}_{\{t\geq
\tau^X\}} )
\\
&=& \mathbb{E}^\mathsf{Q} \biggl(\frac{Y_t}{X_t}\mathbh{1}_{\{t<\tau^X\}
}
\biggr)+\mathbb{E}^\mathsf{Q} (Z_{\tau^X}\mathbh{1}_{\{t\geq\tau^X\}}
)
\\
&=&\mathbb{E}^\mathsf{P} {Y_t}+\mathbb{E}^\mathsf{Q}
(Z_{\tau
^X}\mathbh{1}_{\{t\geq\tau^X\}} )\geq\mathbb{E}^\mathsf{P}Y_t.
\end{eqnarray*}
\begin{longlist}[(a)]
\item[(a)] Since $Y$ is a positive local $\mathsf{P}$-martingale, we have
\begin{eqnarray*}
&& Y\mbox{ is a true $\mathsf{P}$-martingale }
\\
&&\qquad  \Leftrightarrow \quad \mathbb{E}^\mathsf{P}Y_t=
\mathbb{E}^\mathsf{P} Y_0\qquad \mbox{for all }t\geq0,
\\
&& \qquad\Leftrightarrow \quad \mathbb{E}^\mathsf{Q}Z_t =
\mathbb{E}^\mathsf {Q}Z_0 \qquad \mbox{for all }t\geq0,
Z_{\tau^X}\mathbh{1} _{\{\tau^X<\infty\}}=0 \ \mathsf{Q}\mbox{-a.s.}
\end{eqnarray*}
\item[(b)] Follows from (a).
%
\item[(c)]
If (T) holds, $Z_{\tau^X}=0$ on $\{\tau^X<\infty\}$ $\mathsf{Q}$-almost
surely; cf. Remark~\ref{ztx}. Therefore, since $Z$ is a $\mathsf{Q}
$-martingale, the above inequality turns into an equality and $Y$ is a
true $\mathsf{P}$-martingale.
\item[(d)] Follows from (c).
\end{longlist}
\end{enumerate}
\upqed\end{pf}

%
\begin{ex}[(Continuation of Example~\ref{difex})]
For the following example, we work on the path space $C'(\mathbb
{R}_+,\overline{\mathbb{R}}_+^2)$ with $(X,Y)$ denoting the coordinate process and $(\mathcal{F}
_t)_{t\geq0}$ being the right-continuous augmentation of the canonical
filtration generated by the coordinate process. Remember from Example~\ref{difex} that for $\sigma(x)$ locally bounded and bounded away
from zero for $x>0$, $\sigma(0)=0$, the local $\mathsf{P}$-martingale
\[
dX_t=\sigma(X_t)\,dW_t, \qquad X_0=1,
\]
is strictly positive whenever
\[
\int_0^1\frac{x}{\sigma^2(x)}\,dx=\infty,
\]
and under $\mathsf{Q}$ with $ \frac{d\mathsf{P}}{d\mathsf
{Q}}|_{\mathcal{F}_t}=\frac
{1}{X_t}$ the reciprocal process is a true martingale with decomposition
\[
d \biggl(\frac{1}{X_t} \biggr)=-\frac{\sigma(X_t)}{X_t^2}\,dW_t^\mathsf
{Q}=\overline {\sigma} \biggl(\frac{1}{X_t} \biggr)\,dW_t^\mathsf{Q}
\]
for the $\mathsf{Q}$-Brownian motion $W^{\mathsf{Q}}_t=W_t-\int_0^t\frac{\sigma
(X_s)}{X_s}\,ds$ defined on the set $\{t<\tau^X\}$ and $\overline{\sigma
}(y):=-y^2\cdot\sigma (\frac{1}{y} )$.

Now let us assume that $Y$ is also a local martingale under $\mathsf
{P}$ with dynamics
\[
dY_t=\gamma(Y_t)\,dB_t,
\]
where $\gamma$ fulfills the same assumptions as $\sigma$ and $B$ is
another $\mathsf{P}$-Brownian motion such that \mbox{$\langle
B,W\rangle
_t=\rho t$.} Then $\frac{Y}{X}$ is a $\mathsf{Q}$-local martingale
with decomposition
\[
d \biggl(\frac{Y_t}{X_t} \biggr)=\frac{\gamma(Y_t)}{X_t}\,dB_t^\mathsf{Q}
+Y_t\overline{\sigma} \biggl(\frac{1}{X_t} \biggr)\,dW_t^\mathsf{Q},
\]
where $B^\mathsf{Q}$ is a $\mathsf{Q}$-BM defined up to time $\tau
^X$ such that
$\langle B^\mathsf{Q},W^\mathsf{Q}\rangle_t=\rho t$ on $\{t<\tau^X\}$.
\end{ex}

\section{Application to financial bubbles II: Last passage time
formulas}\label{app2}


In Section~\ref{app1}, we have seen how one can determine the
influence bubbles have on option pricing formulas through a
decomposition of the option value into a ``normal'' term and a default
term (cf. Theorems \ref{inbetween} and \ref{barrier}). However, this
approach only works well for options written on one underlying. It is
rather difficult to give a universal way of how to determine the
influence of asset price bubbles on the valuation of more complicated
options and we will not do this here in all generality. Instead, we
will do the analysis for a special example, the so called exchange
option, which allows us to connect results about last passage times
with the change of measure that was defined in Section~\ref{newmeasure}.

Again we suppose that assumption (S) holds throughout the entire
section. In addition, we assume that there exists another strictly
positive process $Y$ on $(\Omega,\mathcal{F},(\mathcal{F}_t)_{t\geq
0},\mathsf{P})$, which is
also a local $\mathsf{P}$-martingale. Furthermore, in the following we will
assume that\vspace*{1pt} $X$ and $Y$ are \textit{continuous}. As in Section~\ref{PQ},
we define $Z:=\frac{Y}{X}$, which is a local $\mathsf{Q}$-martingale.

\subsection{Exchange option}

With the interpretation of $X$ and $Y$ as two stock price processes and
assuming an interest rate of $r=0$, we can define the price of a
European exchange option with strike $K\in\mathbb{R}_+$ (also known
as the
ratio of notionals) and maturity $T\in\mathbb{R}_+$ as
\[
E(K,T):=\mathbb{E}^\mathsf{P}(X_T-KY_T)^+.
\]
The corresponding price of the American option is given by
\[
A(K,T):=\sup_{\sigma\in\mathcal{T}_{0,T}}\mathbb{E}^\mathsf
{P}(X_\sigma -KY_\sigma)^+,
\]
where $\mathcal{T}_{0,T}$ is the set of all stopping times $\sigma$,
which take values in $[0,T]$. Let us define the last passage time $\rho
_K:=\sup \{t\geq0| Z_t=\frac{1}{K} \}$, where as usual
the supremum of the empty set is equal to zero. In the next theorem,
the prices of the European and American exchange option are expressed
in terms of the last passage time $\rho_K$ in the spirit of \cite{optionpricesprobabilities}.

%
\begin{thm}\label{last}
For all $K,T\geq0$, the prices of the European and American exchange
option are given by
\begin{eqnarray*}
E(K,T)&=& \mathbb{E}^\mathsf{Q} \bigl( (1-KZ_{\tau^X} )^+
\mathbh{1}_{\{\rho_K\leq
T<\tau^X\}} \bigr), 
\\
A(K,T) &=&
\mathbb{E}^\mathsf{Q} \bigl( (1-KZ_{\tau^X} )^+\mathbh{1}_{\{\rho_K\leq
T\}}
\bigr).
\end{eqnarray*}
\end{thm}

\begin{pf}
Assume $\sigma\in\mathcal{T}_{0,T}$. As seen above, $Z=\frac{Y}{X}$
is a nonnegative local $\mathsf{Q}$-martingale, thus a
supermartingale, which
converges almost surely to $Z_\infty=Z_{\tau^X}$. From Corollary~3.4
in \cite{drawdowns}, respectively Theorem~2.5 in \cite{optionpricesprobabilities} we have the identity
%
\setcounter{equation}{5}
\begin{equation}
\label{Doob}
\biggl(\frac{1}{K}-Z_\sigma \biggr)^+=
\mathbb{E}^\mathsf{Q} \biggl( \biggl(\frac{1}{K}-Z_{\tau^X}
\biggr)^+\mathbh{1}_{\{\rho_K\leq\sigma\}} \Big|\mathcal{F}_\sigma \biggr).
\end{equation}
Multiplying the above equation with the $\mathcal{F}_\sigma$-measurable random
variable  $K\mathbh{1}_{\{\tau^X>\sigma\}}$ and taking expectations
under $\mathsf{Q}
$ yields
\[
\mathbb{E}^\mathsf{Q} \bigl( (1-KZ_\sigma )^+\mathbh
{1}_{\{\tau^X>\sigma\}
} \bigr)= \mathbb{E}^\mathsf{Q} \bigl(
(1-KZ_{\tau^X} )^+\mathbh{1}_{\{\rho
_K\leq\sigma<\tau^X\}} \bigr).
\]
Changing the measure via $d\mathsf{P}|_{\mathcal{F}_\sigma}=\frac
{1}{X_\sigma}\,d\mathsf{Q}
|_{\mathcal{F}_\sigma}$, we obtain
%
\begin{eqnarray}
\mathbb{E}^\mathsf{P}
(X_\sigma-KY_\sigma )^+ &=& \mathbb {E}^\mathsf{P} \bigl(
\mathbh{1}_{\{\tau
^X>\sigma\}}X_\sigma (1-KZ_\sigma )^+ \bigr)
\nonumber
\\[-8pt]
\label{prob}\\[-8pt]
\nonumber
&=&  \mathbb {E}^\mathsf{Q} \bigl( (1-KZ_{\tau^X} )^+
\mathbh{1}_{\{\rho_K\leq\sigma
<\tau
^X\}} \bigr),
\end{eqnarray}
since $\mathbh{1}_{\{\tau^X>\sigma\}}=1$ $\mathsf{P}$-almost
surely. Taking $\sigma=T$ the formula for the European option is proven.
For the American option value we note that in the proof of Theorem~1.4
in \cite{BKX} it is shown that
\[
A(K,T)=\lim_{n\rightarrow\infty}\mathbb{E}^\mathsf{P}
\biggl(Y_{\tau^X_n\wedge T} \biggl(\frac{1}{Z_{\tau_n^X\wedge T}}-K \biggr)^+ \biggr)= \lim
_{n\rightarrow
\infty}\mathbb{E}^\mathsf{P} (X_{\tau_n^X\wedge T}-KY_{\tau
^X_n\wedge
T}
)^+.
\]
Setting $\sigma=\tau^X_n\wedge T$ in equality (\ref{prob}), it
follows that
\begin{eqnarray*}
&& A(K,T)\\
&& \qquad =\lim_{n\rightarrow\infty}\mathbb{E}^\mathsf{P}
(X_{\tau^X_n\wedge
T}-KY_{\tau^X_n\wedge T} )^+= \lim_{n\rightarrow\infty
}
\mathbb{E}^\mathsf{Q} \bigl( (1-KZ_{\tau^X} )^+\mathbh{1}_{\{\rho_K\leq\tau
^X_n\wedge
T<\tau^X\}}
\bigr)
\\
&& \qquad =\lim_{n\rightarrow\infty}\mathbb{E}^\mathsf{Q} \bigl(
(1-KZ_{\tau^X} )^+\mathbh{1} _{\{\rho_K\leq\tau^X_n\wedge T\}} \bigr) =
\mathbb{E}^\mathsf{Q} \bigl( (1-KZ_{\tau^X} )^+\mathbh
{1}_{\{\rho_K\leq\tau
^X\wedge T\}} \bigr)
\\
&& \qquad=\mathbb{E}^\mathsf{Q} \bigl( (1-KZ_{\tau^X} )^+\mathbh
{1}_{\{\rho_K\leq T\}
} \bigr),
\end{eqnarray*}
where the last equality follows from the fact that $Z_{\tau^X}=\frac
{1}{K}$ on $\{\rho_K>\tau^X\}= \{\rho_K=\infty\}$.
\end{pf}

%
\begin{rem}
Assume that $\mathsf{Q}(\tau^X<\infty)=1$, that is, $\mathbb{E}^\mathsf{P}X_t\stackrel{t\rightarrow\infty}{\longrightarrow}0$. If we take $Y\equiv1$ in
the above
theorem, we get the formula for the standard European call option
expressed as a function of the last passage time of $X$ as it can be
found in~\cite{YenYor} for the special case of Bessel processes or in
\cite{fromto}:
%
\begin{equation}
\label{lastprob}
E(K,T)=\mathsf{Q} \bigl(\rho_K\leq T<
\tau^X \bigr).
\end{equation}
More generally, for arbitrary $Y$ formula (\ref{lastprob}) is still
true, if (T) holds and $\mathsf{Q}(\tau^X<\infty)=1$.
\end{rem}

%
\begin{rem}
We can also express the price of a barrier exchange option in terms of
the last passage time of $Z$ at level $\frac{1}{K}$ as done in
Theorem~\ref{last} for exchange options without barriers. For example,
in the case of the Down-and-In exchange option we simply have to
multiply equation (\ref{Doob}) with the $\mathcal{F}_\sigma$-measurable
random variable $\mathbh{1}_{\{\hat{m}_\sigma^X\leq D\}}$.
\end{rem}

We now analyze a few special cases of Theorem~\ref{last} in more detail:

\begin{longlist}[(2)]
\item[(1)] $X$ is a true $\mathsf{P}$-martingale.

If $X$ is a true $\mathsf{P}$-martingale, the price process for $X$ exhibits
no asset price bubble. Then, regardless of whether the stock price
process $Y$ has an asset price bubble or not, we know that $\mathsf
{Q}$ is
locally equivalent to $\mathsf{P}$ and $\mathsf{Q}(\tau^X=\infty
)=1$. Therefore,
\[
E(K,T)=A(K,T)=\mathbb{E}^\mathsf{Q} \bigl( (1-{K} {Z_\infty } )^+
\mathbh{1}_{\{
\rho_K\leq T\}} \bigr)
\]
and the European and American exchange option values are equal. For
$Y\equiv1$, this formula is well known (cf. \cite{optionpricesprobabilities}).

\item[(2)] $Y$ is a true $\mathsf{P}$-martingale.

We recall from Lemma~\ref{XY} that in this case $Z_{\tau^X}=0$ on $\{
\tau^X<\infty\}$ $\mathsf{Q}$-almost surely. Denoting $\tau
^Z_0=\inf\{
t\geq0| Z_t=0\}$ this translates into $\mathsf{Q}(\tau^X=\tau
^Z_0)=1$, since
\[
\mathsf{Q}\bigl(\tau^Z_0<\tau^X\bigr)=\lim
_{n\rightarrow\infty}\mathsf {Q}\bigl(\tau^Z_0<\tau
_n^X\bigr)=\lim_{n\rightarrow\infty}
\mathbb{E}^\mathsf{P} (X_{\tau
_n^X}\mathbh{1}_{\{\tau
^Z_0<\tau_n^X\}} )=0.
\]
Therefore,
\begin{eqnarray*}
E(K,T)&=&\mathsf{Q} \bigl(\rho_K\leq T<\tau^Z_0
\bigr),
\\
A(K,T)&=&\mathsf{Q} \bigl(\rho_K\leq T\wedge\tau^X
\bigr)= \mathsf {Q} \bigl(\rho_K\leq T\wedge\tau^Z_0
\bigr)=\mathsf{Q} (\rho_K\leq T ),
\end{eqnarray*}
where the last equality follows from the fact that the last passage
time of the level $\frac{1}{K}$ by $Z$ cannot be greater than its
first hitting time of 0. Note that in this case the above formula for
$E(K,T)$ is similar to the one for the European call option given in
\cite{fromto}, Proposition~7; see also \cite{YenYor} for the case of
the reciprocal Bessel process of dimension greater than two.

Especially, the American option premium is equal to
\begin{eqnarray*}
&& A(K,T)-E(K,T)\\
&& \qquad=\mathsf{Q} (\rho_K\leq T )- \mathsf {Q} \bigl(\rho
_K\leq T<\tau^Z_0 \bigr)=\mathsf{Q} \bigl(
\rho_K\leq T,\tau ^Z_0\leq T \bigr)
\\
&&\qquad=\mathsf{Q}\bigl(\tau^Z_0\leq T\bigr)=\mathsf{Q}
\bigl(\tau^X\leq T\bigr)=\gamma_X(0,T),
\end{eqnarray*}
which is just the default of the local $\mathsf{P}$-martingale $X$ or, in
other words, the bubble of the stock $X$ between $0$ and $T$.

\item[(3)] $X$ and $Y$ are both strict local $\mathsf{P}$-martingales: An
example.

Let $X$ and $Y$ be the reciprocals of two independent BES(3)-processes
under~$\mathsf{P}$ and assume that $X_0=x\in\mathbb{R}_+$, while
$Y_0=1$. (Note that
this normalization is different from the previous one. However, since
the density of $X$ respectively $Y$ is explicitly known in this case,
we can do calculations directly under $\mathsf{P}$. This allows us to point
out some anomalies of the option value in the presence of strict local
martingales.)\

We apply the formula for the European call option value written on the
reciprocal BES(3)-process from Example~3.6 in \cite{CoxHobson} and
integrate over $Y$:
\begin{eqnarray*}
\hspace*{-4pt}E(K,T) &=& \int_0^\infty x \biggl[\Phi \biggl(
\frac{x-zK}{xzK\sqrt{T}} \biggr)-\Phi \biggl(-\frac{1}{x\sqrt{T}} \biggr)
\\
&&\hspace*{32pt}{}+\Phi \biggl(
\frac
{1}{x\sqrt{T}} \biggr)-\Phi \biggl(\frac{zK+x}{xzK\sqrt{T}} \biggr) \biggr]
\mathsf{P}(Y_T\in dz)
\\
&&{}-K\int_0^\infty z \biggl\{\Phi \biggl(
\frac{zK+x}{xzK\sqrt{T}} \biggr)-\Phi \biggl(\frac{zK-x}{xzK\sqrt{T}} \biggr)\\
&&\hspace*{54pt}{}+x\sqrt{T}
\biggl[\varphi \biggl(\frac{zK+x}{xzK\sqrt{T}} \biggr)-\varphi \biggl(\frac
{x-zK}{xzK\sqrt{T}}
\biggr) \biggr] \biggr\}\mathsf{P}(Y_T\in dz),
\end{eqnarray*}
where
\[
\mathsf{P}(Y_T\in dz)=\frac{1}{z^3}\frac{dz}{\sqrt{2\pi T}} \biggl(
\exp \biggl(-\frac{(1/z-1)^2}{2T} \biggr)-\exp \biggl(-\frac
{(1/z+1)^2}{2T} \biggr)
\biggr).
\]
Since $\mathbb{E}^\mathsf{P}X_T\stackrel{x\rightarrow\infty
}{\longrightarrow} \frac
{2}{\sqrt{2\pi T}}$ as shown in \cite{Hulley}, the option value
converges to a finite positive value as the initial stock price $X_0=x$
goes to infinity. Therefore, the convexity of the payoff function does
not carry over to the option value.
This anomaly for stock price bubbles has been noticed before by, for
example, \cite{CoxHobson,Hulley}. We refer for the economic intuition
of this phenomenon to \cite{Hulley}, where a detailed analysis of
stock and bond price bubbles modelled by the reciprocal BES(3)-process
is done.

Furthermore, recall that by Jensen's inequality the European exchange
option value is increasing in maturity if $X$ and $Y$ are true
martingales. However, in our example the option value is \textit{not}
increasing in maturity anymore: Indeed, because of $E(K,T)\leq\mathbb
{E}^\mathsf{P}
X_T\stackrel{T\rightarrow\infty}{\longrightarrow}0$ the option value
converges to zero as $T\rightarrow\infty$. Taking $Y\equiv1$, this behaviour
has been noticed before by, for example, \cite{CoxHobson,Hulley,MadanYor,PalP} and is also directly evident from the
representation of $E(K,T)$ in Theorem~\ref{last}.
\end{longlist}

\subsection{Real-world pricing}\label{real}

Here, we want to give another interpretation of Theorem~\ref{last}.
Note that from a mathematical point of view we have only assumed that
$X$ and $Y$ are strictly positive local $\mathsf{P}$-martingales for the
result. Above we have interpreted $\mathsf{P}$ as the risk-neutral probability
and $X,Y$ as two stock price processes. Now note that we have the
identity $(X-KY)^+=Y (\frac{1}{Z}-K )^+$. This motivates
the following alternative financial setting: we take $\mathsf{P}$ to
be the
historical probability and assume that also $\mathsf{P}(Y_0=1)=1$. Normalizing
the interest rate to be equal to zero, the process $S:=\frac{1}{Z}$
denotes the (discounted) stock price process, while $Y$ is a candidate
for the density of an equivalent local martingale measure (ELMM). Since
$Y$ and $X=YS$ are both strictly positive local $\mathsf
{P}$-martingales, they
are $\mathsf{P}$-super-martingales and cannot reach infinity under
$\mathsf{P}$.
Thus, $S=\frac{1}{Z}$ is also strictly positive under $\mathsf{P}$
and does
not attain infinity under $\mathsf{P}$ either.

As before, $X$ and $Y$ are both allowed to be either strict local or
true $\mathsf{P}$-martingales. While the question of whether $X=YS$ is
a true
martingale or not is related to the existence of a stock price bubble
as discussed earlier, the question of whether $Y$ is a strict local
martingale or not is connected to the absence of arbitrage. If $Y$ is a
uniformly integrable $\mathsf{P}$-martingale, an ELMM for $Z$ exists
and the
market satisfies NFLVR. However, as shown in \cite{Fern} and explained
in \cite{BKX}, even if $Y$ is only a strict local martingale, a
super-hedging strategy for any contingent claim written on $S$ exists.
Therefore, the ``normal'' call option pricing formulas
\[
E(K,T)=\mathbb{E}^\mathsf{P} \bigl(Y_T (S_T-K
)^+ \bigr), \qquad A(K,T)=\sup_{\sigma\in\mathcal{T}_{0,T}}\mathbb{E}^\mathsf{P}
\bigl(Y_\sigma (S_\sigma -K )^+ \bigr)
\]
are still reasonable when $Y$ is only a strict local martingale. This
pricing method is also known as ``real-world pricing,'' since we cannot
work under an ELMM directly, but must define the option value under the
real-world measure (cf. \cite{benchmark}). Note that if $Y$ is a true
martingale, we can define an ELMM $\mathsf{P}^*$ for $S$ on $\mathcal
{F}_T$ via $\mathsf{P}
^*|_{\mathcal{F}_T}=Y_T \cdot \mathsf{P}|_{\mathcal{F}_T}$ and the market
satisfies the NFLVR
property until time $T\in\mathbb{R}_+$. In this case, we obtain the usual
pricing formulas
\[
E(K,T)=\mathbb{E}^{\mathsf{P}^*}(S_T-K)^+ \quad \mbox{respectively} \quad  A(K,T)=\sup
_{\sigma\in
\mathcal{T}_{0,T}}\mathbb{E}^{\mathsf{P}^*}(S_\sigma-K)^+.
\]

Following \cite{Hulley}, we can interpret the situation when $Y$ is
only a strict local martingale as the existence of a bond price bubble
as opposed to the stock price bubble discussed above. This is motivated
by the fact that the real-world price of a zero-coupon bond is strictly
less than the (discounted) pay-off of one, if $Y$ is a strict local
martingale. Of course, it is possible to make a risk-free profit in
this case via an admissible trading strategy. From Theorem~\ref{last},
we have the following corollary.

%
\begin{cor}
For all $K,T\geq0$, the values of the European and American call
option under real-world pricing are given by
\begin{eqnarray*}
E(K,T)&=&\mathbb{E}^\mathsf{Q} \biggl( \biggl(1-\frac{K}{S_{\tau
^X}}
\biggr)^+\mathbh{1}_{\{
\rho_K^S\leq T<\tau^X\}} \biggr), \\
A(K,T) &=&  \mathbb{E}^\mathsf{Q}
\biggl( \biggl(1-\frac{K}{S_{\tau
^X}} \biggr)^+\mathbh{1}_{\{
\rho_K^S\leq T\}} \biggr)
\end{eqnarray*}
with $\rho_K^S=\sup\{t\geq0 | S_t=K\}$.
\end{cor}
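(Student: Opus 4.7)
The plan is to show that this corollary is essentially a restatement of Theorem \ref{last} under the identifications dictated by the real-world pricing setup, namely $S = 1/Z$ and $X = YS$. First I would establish the pointwise algebraic identity
\[
Y_t (S_t - K)^+ = (X_t - K Y_t)^+,
\]
which holds $\p$-a.s. for every $t \geq 0$ because $Y \geq 0$ and $X = YS$. Taking $\p$-expectations at $t = T$ gives that the European real-world price coincides with $\E^\p(X_T - K Y_T)^+$, and taking the supremum over $\sigma \in \mathcal{T}_{0,T}$ shows that the American real-world price coincides with $\sup_{\sigma}\E^\p(X_\sigma - K Y_\sigma)^+$. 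In other words, $E(K,T)$ and $A(K,T)$ as defined in this subsection are literally the European and American exchange option prices treated in Theorem \ref{last}.

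Next I would translate the right-hand sides. Since $Z_t = Y_t/X_t = 1/S_t$ on $\{t < \tau^X\}$, the events $\{Z_t = 1/K\}$ and $\{S_t = K\}$ coincide, so $\rho_K = \rho_K^S$. Likewise, under the convention $S_{\tau^X} := 1/Z_{\tau^X}$ (via the $\tilde{Z}$-extension of the previous section), one has $KZ_{\tau^X} = K/S_{\tau^X}$ and hence $(1 - KZ_{\tau^X})^+ = (1 - K/S_{\tau^X})^+$. Substituting these identifications into the two formulas of Theorem \ref{last} produces exactly the two formulas of the corollary.

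The only genuinely subtle point is the meaning of $S_{\tau^X}$ on $\{\tau^X < \infty\}$, but this was already handled: the preceding lemma guarantees that $Z_{\tau^X-}$ exists in $\R_+$ $\q$-a.s., so the extension $\tilde{Z}$ makes $S_{\tau^X}$ a well-defined $[0,\infty]$-valued random variable under $\q$, and the expectations in the corollary are unambiguously defined. With this in hand there is nothing further to prove: the corollary follows from Theorem \ref{last} by direct substitution, with no new stochastic-analytic input required.
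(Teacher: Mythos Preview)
Your proposal is correct and matches the paper's approach exactly: the paper simply states ``From Theorem \ref{last} we have the following corollary'' without further proof, and you have spelled out precisely the substitutions ($X=YS$, $S=1/Z$, hence $\rho_K=\rho_K^S$ and $KZ_{\tau^X}=K/S_{\tau^X}$) that make the corollary an immediate restatement of Theorem \ref{last}. There is no additional content in the paper beyond what you have written.
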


From the above formulas for the European and American call options, it
can easily be seen that their values are generally different, unless
$X=YS$ is a true $\mathsf{P}$-martingale (in this case $\tau^X=\infty
$ $\mathsf{Q}
$-a.s.). Therefore, Merton's no early exercise theorem does not hold
anymore (cf. also \cite{BKX,CoxHobson,JPScomp,JPSincomp}).

Furthermore, note that we have the following formula for any bounded
stopping time $T$:
\[
E(K,T)=\mathbb{E}^\mathsf{P}(X_T-KY_T)^+=
\mathbb{E}^\mathsf{Q} (1-KZ_T )^+-\mathbb{E}^\mathsf{Q}
\bigl(\mathbh{1}_{\{\tau^X\leq T\}}(1-KZ_T)^+ \bigr),
\]
where the second term equals $\mathsf{Q}(\tau^X\leq T)$, if (T)
holds. For
$Y\equiv1$, this decomposition of the European call value is shown in
\cite{PalP}.

Now we show that also the asymptotic behaviour of the European and
American call option is unusual, when we allow $X$ and / or $Y$ to be
strict local $\mathsf{P}$-martingales. From the definition of the European
call option value, we easily see that
\[
\lim_{K\rightarrow0}E(K,T)=\mathbb{E}^\mathsf{P}(Y_TS_T)=
\mathbb {E}^\mathsf{P}X_T=\mathsf{Q}\bigl(\tau^X>T
\bigr), \qquad \lim_{K\rightarrow\infty}E(K,T)=0.
\]
Moreover, using the last passage time formula for the American call
derived above, it follows that
\[
\lim_{K\rightarrow0}A(K,T)=\lim_{K\rightarrow0}\mathsf{Q}
\bigl(\rho _K^S\leq T\bigr)=1,
\]
since $Z$ does not explode $\mathsf{Q}$-a.s., and hence $S$ is strictly
positive under $\mathsf{Q}$. Similarly, denoting $\rho^Z_{1/K}=\sup
 \{
t\geq0| Z_t=\frac{1}{K} \}$, we get
\begin{eqnarray*}
\lim_{K\rightarrow\infty}A(K,T)&=&\lim_{K\rightarrow\infty
}\mathsf{Q}
\bigl(\rho^S_K\leq T, S_{\tau^X}=\infty\bigr)=
\lim_{K\rightarrow\infty}\mathsf{Q}\bigl(\rho ^Z_{1/K}
\leq T, Z_{\tau^X}=0\bigr)
\\
&=& \mathsf{Q}(Z_{\tau^X}=Z_T=0) =\mathsf{Q}\bigl(T\geq
\tau^X, Z_{\tau^X}=0\bigr),
\end{eqnarray*}
which may be strictly positive and equals $\mathsf{Q}(T\geq\tau
^X)=\gamma
_X(0,T)$ under (T). For the asymptotics in $T$, we have
\begin{eqnarray*}
\lim_{T\rightarrow\infty} E(K,T)&=&\mathbb{E}^\mathsf{Q} \biggl(
\biggl(1-\frac{K}{S_{\tau
^X}} \biggr)^+\mathbh{1}_{\{\tau^X=\infty\}} \biggr),
\\
\lim_{T\rightarrow\infty} A(K,T)&=&\mathbb{E}^\mathsf{Q} \biggl(1-
\frac{K}{S_{\tau
^X}} \biggr)^+,
\end{eqnarray*}
and from the definition of the call option it is also clear that
\[
\lim_{T\rightarrow0}E(K,T)=\lim_{T\rightarrow0}A(K,T)=(1-K)^+.
\]

\subsubsection{American option premium under real-world pricing}

We keep the notation and interpretation introduced at the beginning of
Section~\ref{real}. However, we do not assume that $Z$ and/or $X$
are continuous anymore.

%
\begin{lem}\label{EA}
Let $h:\mathbb{R}_{++}\rightarrow\mathbb{R}_+$ be a Borel-measurable
function s.t.\break $\lim_{x\rightarrow\infty}\frac{h(x)}{x}=:\eta$ exists in $\mathbb
{R}_+$. Define $g:\mathbb{R}
_+\rightarrow\mathbb{R}_+$ via $g(x)=x\cdot h (\frac
{1}{x} )$ for $x>0$
and $g(0)=\eta$. We denote by $E(h,T)=\mathbb{E}^\mathsf
{P}(Y_Th(S_T))$ the value of
the European option with maturity $T$ and payoff function $h$ and by
$A(h,T)$ the value of the corresponding American option. Then
\[
E(h,T)=\mathbb{E}^\mathsf{Q}g(Z_T)-\mathbb{E}^\mathsf{Q}
\bigl(\mathbh{1}_{\{\tau^X\leq T\}}g (Z_{\tau^X} ) \bigr).
\]
Furthermore, if in addition $h$ is convex with $h(0)=0$, $h(x)\leq x$
for all $x\in\mathbb{R}_+$ and $\eta=1$, then
\[
A(h,T)=\mathbb{E}^\mathsf{Q}g ({Z_T} ).
\]
\end{lem}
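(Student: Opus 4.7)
The plan is to reduce both expressions to computations under $\q$ by combining the Radon-Nikodym relation $d\q/d\p = X_t$ on $\F_t\cap\{t<\tau^X\}$ from Theorem \ref{komplett} with the pointwise identity $h(y)=y\,g(1/y)$ that is immediate from the definition of $g$. Because $X$ and $Y$ are strictly positive under $\p$, this identity gives $Y_T h(S_T)=X_T g(Z_T)$, so that for any bounded $(\F_t)$-stopping time $\sigma$ (hence $\p$-a.s.\ finite and $\p$-a.s.\ strictly less than $\tau^X$),
\[
\E^\p(Y_\sigma h(S_\sigma)) = \E^\p(X_\sigma g(Z_\sigma)) = \E^\q\bigl(g(Z_\sigma)\indic_{\{\sigma<\tau^X\}}\bigr),
\]
where the second equality is the optional-stopping version of the change-of-measure formula recorded in equation (\ref{chara}).

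For the European statement, apply the above identity with $\sigma=T$ and split $1=\indic_{\{T<\tau^X\}}+\indic_{\{T\geq\tau^X\}}$. On $\{T\geq\tau^X\}$ our extension convention makes $Z_T=Z_{\tau^X-}=Z_{\tau^X}$, so $\E^\q(g(Z_T)\indic_{\{T\geq\tau^X\}})=\E^\q(g(Z_{\tau^X})\indic_{\{\tau^X\leq T\}})$, producing the claimed decomposition.

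For the American part, the first task is to translate the extra hypotheses on $h$ into properties of $g$. Convexity and $h(0)=0$ yield $g$ convex on $(0,\infty)$ (formally via $g''(x)=h''(1/x)/x^3$), while the supporting-line inequality $h(0)\geq h(y)-y h'(y)$ gives $g'(x)=h(1/x)-h'(1/x)/x\leq 0$, so $g$ is non-increasing. The hypothesis $\eta=1$ makes $g$ continuous at $0$ with $g(0)=1$, and $h(x)\leq x$ yields $g\leq 1$. Since $Z$ is a non-negative local $\q$-martingale, hence a $\q$-supermartingale, Jensen's inequality combined with the monotonicity of $g$ shows that $g(Z)$ is a $\q$-submartingale bounded by $1$.

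This reduces the American identity to a sandwich. The upper bound is optional stopping: for any $\sigma\in\mathcal{T}_{0,T}$,
\[
\E^\p(Y_\sigma h(S_\sigma))=\E^\q\bigl(g(Z_\sigma)\indic_{\{\sigma<\tau^X\}}\bigr)\leq \E^\q g(Z_\sigma)\leq \E^\q g(Z_T).
\]
For the lower bound, I would use $\sigma_n:=T\wedge\tau_n^X\in\mathcal{T}_{0,T}$; since $\q(\tau_n^X<\tau^X)=1$ (shown inside the proof of Theorem \ref{komplett}), the indicator drops out and $\E^\p(Y_{\sigma_n}h(S_{\sigma_n}))=\E^\q g(Z_{T\wedge\tau_n^X})$. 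As $n\to\infty$, $Z_{T\wedge\tau_n^X}\to Z_T$ $\q$-a.s.\ (on $\{T<\tau^X\}$ eventually $\tau_n^X>T$, while on $\{T\geq\tau^X\}$ the extension convention gives $Z_{\tau_n^X}\to Z_{\tau^X-}=Z_T$), and continuity of $g$ together with the bound $g\leq 1$ lets dominated convergence close the loop. The main obstacle is purely bookkeeping: distilling the right monotonicity-and-convexity package for $g$ from the hypotheses on $h$, and handling the behaviour of $Z$ at the explosion time $\tau^X$ consistently with the extension convention fixed in Section \ref{newmeasure}.
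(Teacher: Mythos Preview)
Your European argument is essentially identical to the paper's: rewrite $Y_Th(S_T)=X_Tg(Z_T)$, change to $\q$, and split on $\{\tau^X\le T\}$ using the extension convention $Z_T=Z_{\tau^X}$ on that event.

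For the American identity you take a genuinely different route. The paper does not argue via submartingale optional stopping; instead it \emph{imports} from \cite{BKX} the identity
\[
A(h,T)=\lim_{n\to\infty}\E^\p\bigl(Y_{T\wedge\tau_n^X}\,h(S_{T\wedge\tau_n^X})\bigr),
\]
changes to $\q$ to get $\lim_n\E^\q g(Z_{T\wedge\tau_n^X})$, and then passes to the limit by dominated convergence ($g\le 1$). Your sandwich argument is more self-contained: the upper bound $\E^\q(g(Z_\sigma)\indic_{\{\sigma<\tau^X\}})\le\E^\q g(Z_T)$ follows once you know $g(Z)$ is a bounded $\q$-submartingale, and your lower bound via $\sigma_n=T\wedge\tau_n^X$ is exactly the paper's limiting step. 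So you effectively reprove the piece of \cite{BKX} that the paper quotes, at the cost of establishing the monotonicity/convexity package for $g$. One cosmetic point: your ``formal'' computation $g''(x)=h''(1/x)/x^3$ is not rigorous since $h$ is only assumed convex; the clean way is to recognise $g(t)=t\,h(1/t)$ as the restriction to $\{x=1\}$ of the perspective function $(x,t)\mapsto t\,h(x/t)$, which is jointly convex whenever $h$ is convex.
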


\begin{pf}
For the European option value, we have
\begin{eqnarray*}
E(h,T) &=&  \mathbb{E}^\mathsf{P}\bigl(Y_T h(S_T)
\bigr)=\mathbb{E}^\mathsf {Q} \bigl(g(Z_T)
\mathbh{1}_{\{\tau^X>T\}
} \bigr)\\
&=&  \mathbb{E}^\mathsf{Q}g(Z_T)-
\mathbb{E}^\mathsf{Q} \bigl(\mathbh{1}_{\{\tau^X\leq T\}}g (Z_{\tau^X}
) \bigr).
\end{eqnarray*}
And for the American option value we get
\begin{eqnarray*}
A(h,T)&=&\lim_{n\rightarrow\infty}\mathbb{E}^\mathsf{P}
\bigl(Y_{T\wedge\tau
^X_n}h(S_{T\wedge\tau^X_n}) \bigr)= \lim_{n\rightarrow\infty
}
\mathbb{E}^\mathsf{Q} \biggl(Z_{T\wedge\tau^X_n}h \biggl(\frac{1}{Z_{T\wedge\tau^X_n}}
\biggr) \biggr)
\\
&=&\lim_{n\rightarrow\infty}\mathbb{E}^\mathsf{Q}g({Z_{T\wedge
\tau^X_n}})=
\mathbb{E}^\mathsf{Q} g({Z_{T\wedge\tau^X}})=\mathbb{E}^\mathsf{Q}g(Z_T),
\end{eqnarray*}
where the first equality is proven in \cite{BKX} under the above
stated assumptions on $h$ and the fourth equality follows by dominated
convergence since $g\leq1$ is a bounded and continuous function.
\end{pf}

Under the assumptions of Lemma~\ref{EA}, the American option premium
is thus equal to
\[
A(h,T)-E(h,T)=\mathbb{E}^\mathsf{Q} \bigl(\mathbh{1}_{\{\tau^X\leq
T\}}g
(Z_{\tau
^X} ) \bigr).
\]
Note that Lemma~\ref{EA} is a generalization of Theorem A1 in \cite{CoxHobson}. Indeed, if $Y$ is a~uniformly integrable $\mathsf{P}$-martingale
(i.e., NFLVR is satisfied), $Z_{\tau^X}=0$ on $\{\tau^X<\infty\}$ by
part 2(a) of Lemma~\ref{XY}. Thus,
\[
A(h,T)=E(h,T)+g(0)\cdot\mathsf{Q} \bigl(\tau^X\leq T \bigr)=E(h,T)+
\gamma_X(0,T).
\]

\section{Multivariate strictly positive (strict) local
martingales}\label{multi}

So far the measure $\mathsf{Q}$ defined in Theorem~\ref{komplett}
above is
only associated with the local $\mathsf{P}$-martingale $X$ in the
sense that
$X_{\tau_n^X}.\mathsf{P}|_{\mathcal{F}_{\tau^X_n}}=\mathsf
{Q}|_{\mathcal{F}_{\tau^X_n}}$ for all
$n\in\mathbb{N}$ and that $\frac{1}{X}$ is a true martingale under
$\mathsf{Q}$.
One may now naturally wonder whether, given two (or more) positive
local $\mathsf{P}$-martingales $X$ and $Y$, there exists a measure
$\mathsf{Q}$,
under which $\frac{1}{X}$ and $\frac{1}{Y}$ are both local (or even
true) martingales. Obviously, this is the case, if $X$~and $Y$ are
independent under $\mathsf{P}$. In this section, we will consider the case
where $X$~and $Y$ are continuous local $\mathsf{P}$-martingales, but not
necessarily independent.

%
\begin{thm}\label{m1}
Let $(\Omega,\mathcal{F},({\mathcal{F}}_t)_{t\geq0},\mathsf{P})$
be a filtered probability
space, where $(\mathcal{F}_t)_{t\geq0}$ is the right-continuous augmentation
of a standard system. Assume that $X$ and $Y$ are two strictly positive
continuous local $\mathsf{P}$-martingales with
$d\langle X\rangle_t=f_t\,dt$, $d\langle Y\rangle_t=g_t\,dt$ and
$d\langle X,Y\rangle_t=h_t\,dt$. Suppose that for all $t>0$, the
stochastic integral
\[
M_t=\int_0^t\frac{(f_s Y_s-h_sX_s)g_s}{Y_sX_s(f_sg_s-h^2_s)}\,dX_s+
\int_0^t\frac{(g_sX_s-h_sY_s)f_s}{Y_sX_s(f_sg_s-h^2_s)}\,dY_s
\]
is well-defined. Denote by $\tau$ the explosion time of $\mathcal{E}(M)$.
Then there exists a measure $\mathsf{Q}$ on $\mathcal{F}_\infty$,
under which $\frac
{1}{\tilde{X}}$ and $\frac{1}{\tilde{Y}}$ defined via
\begin{eqnarray*}
\tilde{X}_t&=&X_t\mathbh{1}_{\{t<\tau\}}+\liminf
_{s\rightarrow\tau
,s<\tau,s\in\mathbb{Q}
}X_s\mathbh{1}_{\{\tau\leq t<\infty\}},
\\
\tilde{Y}_t&=&Y_t\mathbh{1}_{\{t<\tau\}}+\liminf
_{s\rightarrow\tau
,s<\tau,s\in\mathbb{Q}
}Y_s\mathbh{1}_{\{\tau\leq t<\infty\}}
\end{eqnarray*}
are both continuous nonnegative local $\mathsf{Q}$-martingales and
$d\mathsf{P}|_{\mathcal{F}
_t}=\frac{1}{\mathcal{E}(M)_t}\times \mathbh{1}_{\{t<\tau\}}\,d\mathsf
{Q}|_{\mathcal{F}_t}$ for all
$t\geq0$. 
\end{thm}

\begin{pf}
The stochastic exponential $\mathcal{E}(M)$ is a
continuous local $\mathsf{P}$-martin\-gale with localizing sequence
\[
\tau_n:=\inf\bigl\{t\geq0\dvtx \mathcal{E}(M)_t>n\bigr\}
\wedge n.
\]
We define a consistent family of probability measures $\mathsf{Q}_n$
on $\mathcal{F}
_{\tau_n}$ by
\[
\frac{d\mathsf{Q}_n}{d\mathsf{P}}\bigg|_{\mathcal{F}_{\tau_n}}=\mathcal {E}(M)_{\tau_n},\qquad  n\in
\mathbb{N}.
\]
Using the same trick as in the proof of Theorem~\ref{komplett}, we
restrict each measure~$\mathsf{Q}_n$ to $\mathcal{F}_{\tau_n-}$.
Since $(\mathcal{F}_{\tau
_n-})_{n\in\mathbb{N}}$ is a standard system by Lemma~\ref{standard}, there
exists a unique measure $\mathsf{Q}$ on $\mathcal{F}_{\tau-}$, such
that $\mathsf{Q}|_{\mathcal{F}
_{\tau_n}}=\mathsf{Q}_n$ for all $n\in\mathbb{N}$. For any stopping
time $S$ and
$A\in\mathcal{F}_S$, we get
\[
\mathsf{Q}(S<\tau_n,A)=\mathbb{E}^\mathsf{P} \bigl(\mathcal
{E}(M)_{S\wedge\tau_n}\mathbh{1}_{\{
S<\tau_n,A\}} \bigr)=\mathbb{E}^\mathsf{P}
\bigl(\mathcal {E}(M)_{S}\mathbh{1}_{\{S<\tau
_n,A\}} \bigr).
\]
Taking $n\rightarrow\infty$ results in
\[
\mathsf{Q}(S<\tau,A)=\mathbb{E}^\mathsf{P} \bigl(\mathcal
{E}(M)_S\mathbh{1}_{\{S<\infty,A\}
} \bigr).
\]
It follows that $\mathsf{P}$ is locally absolutely continuous with
respect to
$\mathsf{Q}$ before $\tau$. We choose an arbitrary extension of
$\mathsf{Q}$ from
$\mathcal{F}_{\tau-}$ to $\mathcal{F}_\infty$ as discussed on page~\pageref{extension}. Next, according to Girsanov's theorem applied on $\mathcal{F}
_{\tau_n}$,
\begin{eqnarray*}
N_{t\wedge\tau_n}&:=&X_t^{\tau_n}-\bigl\langle
M^{\tau_n},X^{\tau
_n}\bigr\rangle_t\\
&\,\,=&  X^{\tau_n}_t-
\int_0^{t\wedge\tau_n}\frac{(f_s
Y_s-h_sX_s)g_s}{Y_sX_s(f_sg_s-h^2_s)}\,d\langle X
\rangle_s
\\
&&{}-\int_0^{t\wedge\tau_n}\frac
{(g_sX_s-h_sY_s)f_s}{Y_sX_s(f_sg_s-h^2_s)}\,d\langle X,Y
\rangle_s
\\
&\,\,=&X^{\tau_n}_t-\int_0^{t\wedge\tau_n}
\frac{(f_s
Y_s-h_sX_s)g_sf_s+(g_sX_s-h_sY_s)f_sh_s}{Y_sX_s(f_sg_s-h^2_s)}\,ds \\
&\,\,=& X^{\tau_n}_t-\int_0^{t\wedge\tau_n}
\frac{f_s}{X_s}\,ds
\end{eqnarray*}
is a local $\mathsf{Q}$-martingale. We apply It\^o's formula:
\begin{eqnarray*}
\frac{1}{X_{t \wedge{\tau_n}}}&=&\frac{1}{X_0}-\int_0^{t\wedge
\tau_n}
\frac{dX_s}{X_s^2}+\int_0^{t\wedge\tau_n}
\frac{d\langle
X\rangle_s}{X_s^3}
\\
&=&\frac{1}{X_0}-\int_0^{t\wedge\tau_n}
\frac
{dN_s}{X_s^2}-\int_0^{t\wedge\tau_n}
\frac{f_s}{X^3_s}\,ds+\int_0^{t\wedge\tau_n}
\frac{f_s}{X_s^3}\,ds=\frac{1}{X_0}-\int_0^{t\wedge\tau_n}
\frac{dN_s}{X_s^2}.
\end{eqnarray*}
Thus, $\frac{1}{X^{\tau_n}}$ is a local $\mathsf{Q}$-martingale for all
$n\in\mathbb{N}$. Since $\frac{1}{X}$ is continuous, $(\tau
_m^{1/X})_{m\in
\mathbb{N}}$ is a localizing sequence for $\frac{1}{X^{\tau_n}}$ on
$(\Omega
,\mathcal{F}_{\tau_n},\mathsf{Q})$ for all $n\in\mathbb{N}$, where
\[
\tau_m^{1/X}:= \inf \biggl\{t\geq0\dvtx \frac{1}{X_t}> m
\biggr\}\wedge m, \qquad \tau^{1/X}:=\lim_{m\rightarrow\infty}
\tau^{1/X}_m.
\]
Moreover, we have
\[
\mathsf{Q}\bigl(\tau^{1/X}<\tau\bigr)=\lim_{n\rightarrow\infty}
\mathsf {Q}\bigl(\tau^{1/X}<\tau_n\bigr)= \lim
_{n\rightarrow\infty}\mathbb{E}^\mathsf{P} \bigl(\mathcal
{E}(M)_{\tau_n}\mathbh{1}_{\{\tau
^{1/X}<\tau_n\}} \bigr)=0,
\]
because $X$ is strictly positive under $\mathsf{P}$. Since a process
which is
locally a local martingale is a local martingale itself, we conclude
that $\frac{1}{X}$ is a positive local $\mathsf{Q}$-martingale up to time
$\tau$ with localizing sequence $(\tau_n\wedge\tau_n^{1/X})_{n\in
\mathbb{N}}$. Especially, $\lim_{n\rightarrow\infty}X_{\tau
_n}=\lim_{n\rightarrow\infty
}X_{\tau_n\wedge\tau_n^{1/X}}$ exists $\mathsf{Q}$-almost surely. Thus,
$\frac{1}{\tilde{X}}$ is a continuous positive $\mathsf{Q}$-super-martingale
and $\tau^{1/X}_n\rightarrow\infty$ $\mathsf{Q}$-almost surely. Therefore,
\[
1\geq\mathbb{E}^\mathsf{Q} \biggl(\frac{1}{\tilde{X}_{\tau
_n^{1/X}}} \biggr)=\lim
_{m\rightarrow\infty}\mathbb{E}^\mathsf{Q} \biggl(\frac{1}{\tilde
{X}_{\tau_n^{1/X}\wedge
\tau_m}}
\biggr)\geq \lim_{m\rightarrow\infty}\mathbb{E}^\mathsf{Q} \biggl(
\frac
{1}{\tilde{X}_{\tau
_m^{1/X}\wedge\tau_m}} \biggr)=1,
\]
where the two inequalities follow by the super-martingale property.
Hence, $\frac{1}{\tilde{X}}$ is a local $\mathsf{Q}$-martingale.

%
%
%
%
%
%
For $\frac{1}{\tilde{Y}}$, the claim follows by analogous calculations.
\end{pf}

But are $\frac{1}{\tilde{X}}$ and $\frac{1}{\tilde{Y}}$ in the
setting of Theorem~\ref{m1} actually true $\mathsf{Q}$-martingales or just
local $\mathsf{Q}$-martingales? In general, there does not seem to be
an easy
answer to this question. However, if $X$ (resp. $Y$) is a homogeneous
diffusion, one can show the following extension of the above theorem.

%
\begin{lem}\label{ml}
In the setting of Theorem~\ref{m1} assume that $X$ follows the 
$\mathsf{P}$-dynamics
\[
dX_t=\sigma(X_t)\,dB_t
\]
for some $\mathsf{P}$-Brownian motion $B$, where $\sigma(\cdot)$ is locally
bounded and bounded away from zero on $(0,\infty)$ and $\sigma(0)=0$.
Then $\frac{1}{\tilde{X}}$ is a $\mathsf{Q}$-martingale, where the
measure~$\mathsf{Q}$ is constructed in Theorem~\ref{m1}.
\end{lem}

\begin{pf}
Note that, with the notation used in the proof of Theorem~\ref{m1}, up
to time $\tau$ the process $N$ follows the dynamics
\[
dN_t=\sigma(X_t)\,dB_t^\mathsf{Q},
\]
where
\[
B_t^\mathsf{Q}:=B_t-\int_0^t
\frac{\sigma(X_s)}{X_s}\,ds
\]
is a $\mathsf{Q}$-Brownian motion on $[0,\tau)$ by L\'evy's theorem. Hence,
the $\mathsf{Q}$-dynamics of~$\frac{1}{X}$ up to time $\tau$ are
given by
%
\begin{equation}
\label{dyn} d \biggl(\frac{1}{X_t} \biggr)=-\frac{\sigma(X_t)}{X_t^2}\,dB_t^\mathsf{Q}
=:\overline{\sigma} \biggl(\frac{1}{X_t} \biggr)\,dB_t^\mathsf{Q}
\end{equation}
and we are in a situation similar to Example~\ref{difex}. Especially,
$\frac{1}{\tilde{X}}$ is a stopped homogeneous diffusion under
$\mathsf{Q}$.
Recall that since $X$ is strictly positive under $\mathsf{P}$, we must have
\[
\int_0^1\frac{x}{\sigma^2(x)}\,dx=\infty.
\]
But any diffusion on an auxiliary probability space with the dynamics
described in (\ref{dyn}) satisfies
\[
\int_1^\infty\frac{x}{\overline{\sigma}^2(x)}\,dx=\int
_0^1\frac
{y}{\sigma^2(y)}\,dy=\infty
\]
and is hence a true martingale by the criterion of \cite{DelbaenShirakawa}, cf. also Example~\ref{difex}. Naturally, any
stopped diffusion with the same dynamics is a martingale as well. Since
the fact whether $\frac{1}{\tilde{X}}$ is a true martingale or not
only depends on its distributional properties, we may therefore
conclude that $\frac{1}{\tilde{X}}$ is indeed a $\mathsf{Q}$-martingale.
\end{pf}

%
\begin{rem}
Theorem~\ref{m1} deals with two strictly positive local $\mathsf{P}$-martin\-gales. It is, however, obvious that one can get a similar
result for $n\geq2$ strictly positive local $\mathsf{P}$-martingales. Also
note that the construction in Theorem~\ref{m1} is only possible if the
local quadratic covariation matrix of the local $\mathsf
{P}$-martingales is
sufficiently nondegenerate. Moreover, it is interesting that the
statement of Lemma~\ref{ml} contains no further restrictions on the
stochastic behaviour of $Y$.
\end{rem}

We briefly want to describe a different approach focusing on
``conformal local martingales'' in $\mathbb{R}^d, d>2$, which is
dealt with
in \cite{PalP}.

%
\begin{df}
A continuous local martingale $X$, taking values in $\mathbb{R}^d$, is called
a conformal local martingale on $(\Omega,\mathcal{F},(\mathcal
{F}_t)_{t\geq0},\mathsf{P})$,
if $\langle X^i,X^j\rangle=\langle X^1\rangle\mathbh{1}_{\{i=j\}}$
$\mathsf{P}
$-almost surely for all $1\leq i,j\leq d$.
\end{df}

In \cite{PalP}, the authors make the restriction that the conformal
local martingale does not enter some compact neighborhood of the origin
in $\mathbb{R}^d$. Using simple localization arguments as in
Theorem~\ref{komplett} above, one can get rid off this assumption which seems
somehow inappropriate when dealing with stock price processes. This
yields the following extended version of Lemma~12 in \cite{PalP}. We
denote by $|\cdot|$ the Euclidean norm in $\mathbb{R}^d$.

%
\begin{thm}
Let $(\Omega,\mathcal{F},({\mathcal{F}}_t)_{t\geq0},\mathsf{P})$
be a filtered probability
space such that $(\mathcal{F}_t)_{t\geq0}$ is the right-continuous
augmentation of a standard system.
For $d>2$, let $X=(X^1,\ldots,X^d)$ be a conformal local $\mathsf{P}
$-martingale. Suppose that $X_0=x_0$ with $|x_0|=1$ and define
$\tau:=\inf\{t\geq0|  |X_t|=0\}$.
Then there exists a measure $\mathsf{Q}$ on $\mathcal{F}_{\infty}$,
such that $\mathsf{Q}
|_{\mathcal{F}_t}\gg\mathsf{P}|_{\mathcal{F}_t}$ for all $t\geq0$
and such that
\[
Y_t:= %
\cases{\ds\frac{X_t}{|X_t|^2}, &\quad  $t<\tau$,
\vspace*{3pt}\cr
\ds\liminf
_{s\rightarrow
\tau
,s<\tau,s\in\mathbb{Q}} \frac{X_s}{|X_s|^2}, &\quad  $t\geq\tau$}
\]
is a conformal uniformly-integrable $\mathsf{Q}$-martingale.
\end{thm}

\begin{pf}
Note that $\mathsf{P}(\tau<\infty)=0$ by Knight's theorem because a standard
$d$-dimensional Brownian motion does not return to the origin almost
surely for $d>2$. We define the stopping times $\tau_n:=\inf\{t\geq
0\dvtx |X_t|\leq\frac{1}{n}\}$. As in Lemma~11 in \cite{PalP}, it
follows that $ (|X_{t\wedge\tau_n}|^{2-d} )_{t\geq0}$ is
a uniformly integrable $\mathsf{P}$-martingale for all $n\in\mathbb
{N}$, because
$|\cdot|^{2-d}$ is harmonic. We define a consistent family of
probability measures $\mathsf{Q}_n$ on $\mathcal{F}_{\tau_n}$ by
\[
\frac{d\mathsf{Q}_n}{d\mathsf{P}}\bigg|_{\mathcal{F}_{\tau_n}}=|X_{\tau
_n}|^{2-d},\qquad  n\in
\mathbb{N}.
\]
Using the same trick as in the proof of Theorem~\ref{komplett}, we
restrict each measure $\mathsf{Q}_n$ to $\mathcal{F}_{\tau_n-}$.
Since $(\mathcal{F}_{\tau
_n-})_{n\in\mathbb{N}}$ is a standard system, there exists a unique measure
$\mathsf{Q}$ on $\mathcal{F}_{\tau-}$, such that $\mathsf
{Q}|_{\mathcal{F}_{\tau_n}}=\mathsf{Q}_n$ for all
$n\in\mathbb{N}$. For any stopping time $S$, we thus get
\[
\mathsf{Q}(S<\tau_n)=\mathbb{E}^\mathsf{P}
\bigl(|X_{\tau
_n}|^{2-d}\mathbh{1}_{\{S<\tau_n\}
} \bigr)=
\mathbb{E}^\mathsf{P} \bigl(|X_{S}|^{2-d}
\mathbh{1}_{\{
S<\tau_n\}} \bigr).
\]
Choosing $S=t<\infty, A\in\mathcal{F}_t$ and taking $n\rightarrow
\infty$ results in
\[
\mathsf{Q}\bigl(A\cap\{t<\tau\}\bigr)=\mathbb{E}^\mathsf{P}
\bigl(|X_t|^{2-d}\mathbh{1}_A \bigr).
\]
Therefore, $\mathsf{P}$ is locally absolutely continuous to $\mathsf
{Q}$ before $\tau$. As explained on page~\pageref{extension} there exists an extension
of $\mathsf{Q}$ from $\mathcal{F}_{\tau-}$ to $\mathcal{F}_\infty
$, which we also denote by
$\mathsf{Q}$.

From Lemma~12 in \cite{PalP}, we know that $\frac{X_{t\wedge\tau
_n}}{|X_{t\wedge\tau_n}|^2}$ is a conformal $\mathsf
{Q}_n$-martingale. Furthermore,
\[
\Bigl(\mathbb{E}^\mathsf{Q}\sup_{t<\tau}
|Y^i_t| \Bigr)^2\leq \mathbb{E}^\mathsf{Q}
\sup_{t<\tau} |Y^i_t|^2\leq1,\qquad
1\leq i\leq d.
\]
Thus, $Y$ is a continuous uniformly integrable $\mathsf{Q}$-martingale by
Exercise 1.48 in Chapter IV of \cite{RevuzYor}. Clearly, $Y$ is also conformal.
\end{pf}

\begin{appendix}
\section*{Appendix: Condition $(P)$}\label{AppP}
In Theorem~\ref{thm1}, we mentioned condition $(P)$, which was
introduced in Definition~4.1 in \cite{newkind} following \cite{para}
as follows.

%
\begin{df}
Let $(\Omega, \mathcal{F}, (\mathcal{F}_t )_{t\geq0} )$ be a
filtered measurable
space, such that $\mathcal{F}$ is the $\sigma$-algebra generated by $(\mathcal{F}_t)_{t\geq0} \dvtx  \mathcal{F}=
\bigvee_{t\geq0}\mathcal{F}_t$.
We shall say that the property $(P)$ holds if
and only if $(\mathcal{F}_t )_{t\geq0}$ enjoys the following conditions:
\begin{itemize}
\item For all $t \geq0$, $\mathcal{F}_t$ is generated by a countable
number of sets.
\item For all $t \geq0$, there exists a Polish space $\Omega_t$, and
a surjective map $\pi_t$ from $\Omega$ to~$\Omega_t$,
such that $\mathcal{F}_t$ is the $\sigma$-algebra of the inverse
images by
$\pi_t$ of Borel sets in~$\Omega_t$, and
such that for all $B\in\mathcal{F}_t$, $\omega\in\Omega$, $\pi
_t(\omega
)\in\pi_t (B)$ implies $\omega\in B$.
\item If $(\omega_n )_{ n\geq0}$ is a sequence of elements of $\Omega
$ such that for all $N\geq0$,
\[
\bigcap_{n\geq0}^N A_n (
\omega_n ) \neq\varnothing,
\]
where $A_n (\omega_n )$ is the intersection of the sets in $\mathcal{F}_n$
containing $\omega_n$, then
\[
\bigcap_{n\geq0}^\infty A_n (
\omega_n ) \neq\varnothing.
\]
\end{itemize}
\end{df}
\end{appendix}

%





\printaddresses
\end{document}